\newtheorem{theorem}{Theorem}[section]
\newtheorem{corollary}[theorem]{Corollary}
\newtheorem{lemma}[theorem]{Lemma}
\newtheorem{proposition}[theorem]{Proposition}
\theoremstyle{definition}
\newtheorem{definition}[theorem]{Definition}
\newtheorem{notation}[theorem]{Notation}
\newtheorem{example}[theorem]{Example}
\newtheorem{remark}[theorem]{Remark}
\begin{document}

\title{Products of Ideals in Leavitt Path Algebras}

\author{Gene Abrams, Zachary Mesyan, and Kulumani M.\ Rangaswamy}
\maketitle

\begin{abstract}
Ideals in Leavitt path algebras share many properties with those of integral domains. Since studying factorizations of ideals in integral domains into special types of ideals (e.g., prime, prime-power, primary, irreducible, semiprime, quasi-primary) has proved fruitful, we conduct an analogous investigation for Leavitt path algebras. Specifically, we classify the proper ideals in these rings that admit factorizations into products of each of the above types. We also classify the Leavitt path algebras where every proper ideal admits a factorization of these sorts, as well as those Leavitt path algebras where every proper ideal is of one of those types.

\medskip

\noindent
\emph{Keywords:} Leavitt path algebra, product of ideals, prime ideal, semiprime ideal, primary ideal, irreducible ideal 

\noindent
\emph{2010 MSC numbers:} 16W10, 16D25, 16D70


\end{abstract}

\section{Introduction}\label{Introsection}

Since their introduction in 2004, Leavitt path algebras have become an active area of research. Although Leavitt path algebras $L$ are in general noncommutative, and as well possess plenty of zero divisors, recent investigations show that the (two-sided) ideals of such  algebras share many properties with the ideals of commutative integral domains. For example, multiplication of ideals in $L$ is commutative (\cite[Corollary 2.8.17]{AAS} and \cite[Theorem 3.4]{R2});  $L$ is  B\'{e}zout, that is,  all the finitely generated ideals of $L$ are principal \cite[Corollary 8]{R1}; $L$ is hereditary (i.e., ideals are projective as $L$-modules), a property of Dedekind domains \cite[Theorem 3.7]{AG}; and the ideal lattice of $L$ is distributive \cite[Theorem 4.3]{R2}, a characterizing property of Pr\"{u}fer domains among integral domains. (We note that the B\'{e}zout and hereditary properties hold for all one-sided ideals as well.)   It is well-known (see, e.g., \cite{F0}, \cite{MD}, \cite{OR}) that these integral domains admit satisfactory factorizations of their ideals as products and intersections of special types of ideals such as prime, prime-power, primary, irreducible, semiprime, or quasi-primary. In light of the aforementioned similarities, it is natural to investigate factorizations of the ideals of a Leavitt path algebra $L$ as products of these special types of ideals of $L$.

Although many different properties of ideals in general rings will be highlighted in this article, we distill our focus to the following two: primeness and semiprimeness. 
As we shall see, the other properties mentioned above, which in general rings are distinct from these, turn out to be subsumed by them in the context of ideal factorizations in Leavitt path algebras.  

In Section \ref{prelim-section} we present the standard graph-theoretic conditions which will play a role in the article.  We then give the definition of Leavitt path algebras, and present various well-known properties thereof, especially related to the ideal structure.    Subsequently, in the brief Section \ref{MultCondSection}, we collect various multiplicative conditions on ideals in general rings, and show that many of them are equivalent in the context of Leavitt path algebras (Proposition \ref{quasi-primaryequivalent}).

In Section \ref{primesection} we analyze the prime ideals of a Leavitt path algebra, relying on earlier results from the literature. Recall that a proper ideal $P$ in a not-necessarily-commutative ring $R$ is called $\textit{prime}$ in case whenever $I,J$ are ideals of $R$ for which $IJ \subseteq P$, then $I\subseteq P$ or $J\subseteq P$. Clearly this generalizes the standard definition of primeness in the commutative case.  This is equivalent to the condition that whenever $a,b\in R$ with $aRb \subseteq P$, then $a\in P$ or $b\in P$. We begin by classifying those Leavitt path algebras in which every proper ideal is prime (Theorem \ref{Everyone prime}). We then describe ideals which can be factored as products of (finitely many) prime ideals (Theorem \ref{productprime}), and subsequently complete the section by classifying the Leavitt path algebras in which every proper ideal can be factored in such a way (Theorem \ref{everyone-a-product-of-primes}). In light of Proposition \ref{quasi-primaryequivalent}, these results also cover factorizations of ideals into products of prime-power, primary, irreducible, and quasi-primary ones.

In Section \ref{semiprimesection}, which constitutes the heart of the article, we study the semiprime ideals of $L$. A proper ideal $I$ of a ring $R$ is called a \textit{semiprime} (or a \textit{radical}) ideal if whenever $J$ is an ideal of $R$ for which $J^2 \subseteq I$, then $J\subseteq I$. This is equivalent, among other conditions, to $I$ being the intersection of all the prime ideals of $R$ containing $I$. We show in Theorem \ref{Every I radical} that every proper ideal of $L$ is a semiprime ideal if and only if the underlying graph of $L$ satisfies Condition (K) (see definition in Section \ref{graphs-section}). Our main result of this section (Theorem \ref{Product of semiprimes}) gives a complete description, in terms generators, of the ideals of $L$ which can be factored as products of semiprime ideals. We also give necessary and sufficient conditions under which every proper ideal of $L$ can be factored as a product of semiprime ideals (Theorem \ref{Every I is prod. semiprimes}). In particular, $L$ has this property if $L$ is two-sided Noetherian and thus, for example, if the underlying graph is finite. 

Many of our results include graphical conditions and are illustrated by examples.  

\section{Preliminaries} \label{prelim-section}

We refer the reader to \cite{AAS} for the general notation, terminology, and results regarding Leavitt path algebras. Throughout, the word ``ideal" will  mean ``two-sided ideal" unless otherwise indicated. We begin  by recalling some of the basic concepts and results that will be needed. The reader familiar with the subject is encouraged to skip the rest of this section and refer back as necessary.

\subsection{Graphs} \label{graphs-section}

A \textit{directed graph} $E=(E^{0},E^{1},r,s)$ consists of two sets $E^{0}$ and $E^{1}$, together with functions $r,s:E^{1}\rightarrow E^{0}$, called \emph{range} and \emph{source}, respectively. The elements of $E^{0}$ are called \textit{vertices}, and the elements of $E^{1}$ are called \textit{edges}. We shall refer to directed graphs as simply ``graphs" from now on. Unless stated otherwise, we place no restrictions on the cardinalities of $E^{0}$ and $E^{1}$ in the graphs $E$ considered, except that $|E^{0}| > 0$.

A \emph{path} $\mu$ in a graph $E$ is a finite sequence $e_1\cdots e_n$ of edges $e_1,\dots, e_n \in E^1$ such that $r(e_i)=s(e_{i+1})$ for $i\in \{1,\dots,n-1\}$. Here we define $s(\mu):=s(e_1)$ to be the \emph{source} of $\mu$, $r(\mu):=r(e_n)$ to be the \emph{range} of $\mu$, and $|\mu|:=n$ to be the \emph{length} of $\mu$. We view the elements of $E^0$ as paths of length $0$ (extending $s$ and $r$ to $E^0$ via $s(v)=v = r(v)$ for all $v\in E^0$). The set of all vertices on a path $\mu$ is denoted by $\{\mu^{0}\}$. A path $\mu = e_1\cdots e_{n}$ is \textit{closed} if $r(e_{n})=s(e_{1})$, in which case $\mu$ is said to be \textit{based} at the vertex $s(e_{1})$. A closed path $\mu = e_1\cdots e_{n}$ is a \textit{cycle} if $s(e_{i})\neq s(e_{j})$ for all $i\neq j$. An \textit{exit} for a path $\mu = e_1\cdots e_{n}$ is an edge $f \in E^{1} \setminus \{e_1, \dots, e_n\}$ that satisfies $s(f)=s(e_{i})$ for some $i$. The graph $E$ is said to satisfy \textit{Condition (L)} if every cycle in $E$ has an exit. Also $E$ is said to satisfy \textit{Condition (K)} if any vertex on a closed path $\mu$ is also the base for a closed path $\gamma$ different from $\mu$ (i.e., one possessing a different set of edges). A cycle $\mu$ in $E$ is said to be \textit{without (K)} if no vertex along $\mu$ is the source of a different cycle in $E$.

Given a vertex $v \in E^{0}$, we say that $v$ is a \textit{sink} if $s^{-1}(v) = \emptyset$, that $v$ is \textit{regular} if $s^{-1}(v)$ is finite but nonempty, and that $v$ is an \textit{infinite emitter} if $s^{-1}(v)$ is infinite. A graph without infinite emitters is said to be
\textit{row-finite}. If $u,v \in E^0$ and there is a path $\mu$ in $E$ satisfying $s(\mu) = u$ and $r(\mu)=v$, then we write $u \geq v$. Given a vertex $v \in E^0$, we set $M(v) = \{w \in E^0 \mid w \geq v\}$. A nonempty subset $D$ of $E^{0}$ is said to be \textit{downward directed} if for any $u,v\in D$, there exists $w\in D$ such that $u\geq w$ and $v\geq w$. A subset $H$ of $E^0$ is \emph{hereditary} if whenever $u \in H$ and $u \geq v$ for some $v \in E^0$, then $v \in H$. Also $H \subseteq E^0$ is \emph{saturated} if $r(s^{-1}(v)) \subseteq H$ implies that $v \in H$ for any regular $v \in E^0$. A nonempty subset $M$ of $E^0$ is a \emph{maximal tail} if it satisfies the following three conditions.

(MT1) If $v\in M$ and $u\in E^0$ are such that $u \geq v$, then $u\in M$.

(MT2) For every regular $v \in M$ there exists $e \in E^1$ such that $s(e)=v$ and $r(e) \in M$. 

(MT3) $M$ is downward directed.\\
For any subset $H$ of $E^0$ it is easy to see that $H$ is hereditary if and only if $M=E^0\setminus H$ satisfies (MT1), and $H$ is saturated if and only if $M=E^0\setminus H$ satisfies (MT2).

\subsection{Leavitt Path Algebras}

Given a graph $E$ and a field $K$, the \textit{Leavitt path $K$-algebra} $L_K(E)$ \textit{of $E$} is the $K$-algebra generated by the set $\{v : v\in E^{0}\} \cup \{e,e^* : e\in E^{1}\}$, subject to the following relations:

\smallskip

{(V)} \ \ \ \  $vw = \delta_{v,w}v$ for all $v,w\in E^{0}$,

{(E1)} \ \ \ $s(e)e=er(e)=e$ for all $e\in E^{1}$,

{(E2)} \ \ \ $r(e)e^*=e^*s(e)=e^*$ for all $e\in E^{1}$,

{(CK1)} \ $e^*f=\delta _{e,f}r(e)$ for all $e,f\in E^{1}$, and

{(CK2)} \  $v=\sum_{e\in s^{-1}(v)} ee^*$ for all regular $v\in E^{0}$.   

\smallskip

\noindent Throughout this article, $K$ will denote an arbitrary field, $E$ will denote an arbitrary graph, and $L_K(E)$ will often be denoted simply by $L$. 

For all $v \in E^0$ we define $v^*:=v$, and for all paths $\mu  = e_1 \cdots e_n$ ($e_1, \dots, e_n \in E^1$) we set $\mu^*:=e_n^* \cdots e_1^*$, $r(\mu^*):=s(\mu)$, and $s(\mu^*):=r(\mu)$. With this notation, every element of $L_K(E)$ can be expressed (though not necessarily uniquely) in the form $\sum_{i=1}^n a_i\mu_i\nu_i^*$ for some $a_i \in K$ and paths $\mu_i,\nu_i$. We also note that while $L_K(E)$ is generally not unital, it has \textit{local units} for every choice of $E$. That is, for each finite subset $\{a_1, \dots, a_t\} \subseteq  L_K(E)$ there is an idempotent $u\in L_K(E)$ (which can be taken to be a sum of vertices in $E^{0}$) such that $ua_i=a_i=a_iu$ for all $1\leq i \leq t$.  

Every Leavitt path algebra $L_{K}(E)$ is $\mathbb{Z}$-graded (where $\mathbb{Z}$ denotes the group of  integers). Specifically, $L_{K} (E)=\bigoplus_{n\in\mathbb{Z}} L_{n}$, where 
\[
L_{n}=\bigg\{\sum_i k_{i}\mu_{i}\nu_{i}^*\in L_{K}(E) :\text{ }|\mu_{i}|-|\nu_{i}|=n\bigg\}.
\] 
Here the \emph{homogeneous components} $L_{n}$ are abelian subgroups satisfying $L_{m}L_{n}\subseteq L_{m+n}$ for all $m, n\in \mathbb{Z}$.  An ideal $I$ of $L_{K}(E)$ is said to be a \emph{graded ideal} if $I = \bigoplus_{n\in\mathbb{Z}} (I\cap L_{n})$. Equivalently, if $a\in I$ and $a=a_{i_{1}}+\cdots+a_{i_{m}}$ is a graded sum, with $a_{i_{k}}\in L_{i_{k}}$ for all $k \in \{1,\dots, m\}$ and $i_1, \dots, i_m$ distinct, then $a_{i_{k}}\in I$ for all $k$.

\subsection{Ideals in Leavitt Path Algebras} \label{LPAidealSect}

In this subsection we record various results from the literature and basic observations about ideals in Leavitt path algebras that will be used frequently in the paper.

Given a graph $E$, a \emph{breaking vertex} of a hereditary saturated subset $H$ of  $E^0$ is an infinite emitter $w\in E^{0}\backslash H$ with the property that $0<|s^{-1}(w)\cap r^{-1}(E^{0}\backslash H)|<\aleph_0$. The set of all breaking vertices of $H$ is denoted by $B_{H}$. For any $v\in B_{H}$, $v^{H}$ denotes the element $v-\sum_{s(e)=v, \, r(e) \notin H}ee^*$ of $L_K(E)$. Given a hereditary saturated subset $H$ of $E^0$ and $S\subseteq B_{H}$, $(H,S)$ is called an \emph{admissible pair}, and the ideal of $L_K(E)$ generated by $H\cup \{v^{H}:v\in S\}$ is denoted by $I(H,S)$. The graded ideals of $L_{K}(E)$ are precisely the ideals of the form $I(H,S)$ \cite[Theorem 2.5.8]{AAS}. Moreover, given two admissible pairs $(H_1,S_1)$ and $(H_2,S_2)$, setting $(H_1,S_1) \leq (H_2,S_2)$ whenever $H_1 \subseteq H_2$ and $S_1 \subseteq H_2 \cup S_2$, defines a partial order on the set of all admissible pairs of $L_K(E)$. The map $(H,S) \mapsto I(H,S)$ gives a one-to-one order-preserving correspondence between the partially ordered set of admissible pairs and the set of all graded ideals of $L_K(E)$, ordered by inclusion. Finally, for any admissible pair $(H,S)$ we have $L_{K}(E)/I(H,S)\cong L_{K} (E\backslash(H,S))$, via a graded isomorphism (i.e., one that takes each homogeneous component in one ring to the corresponding homogeneous component in the other ring) 
\cite[Theorem 2.4.15]{AAS}. Here $E\backslash(H,S)$ is a \emph{quotient graph of} $E$, where \[(E\backslash(H,S))^{0}=(E^{0}\backslash H)\cup\{v':v\in B_{H} \backslash S\}\]
and
\[(E\backslash(H,S))^{1}=\{e\in E^{1}:r(e)\notin H\}\cup\{e':e\in E^{1} \text{ with } r(e)\in B_{H}\backslash S\},\]
and $r,s$ are extended to $E\backslash(H,S)$ by setting $s(e^{\prime
})=s(e)$ and $r(e')=r(e)'$. We note that $(E\backslash(H,B_{H}))^{0} = E^{0} \backslash H$ for any  hereditary saturated $H$.

\begin{theorem} \label{LPAbasics}
Let $L=L_K(E)$ be a Leavitt path algebra.
\begin{enumerate}
\item[$(1)$] \cite[Proposition 2.9.9]{AAS} \ Every ideal of $L$ is graded if and only if $E$ satisfies Condition (K).  

\item[$(2)$] \cite[Proposition 2.3.2]{AAS} \ The Jacobson radical of $L$ is zero.
\end{enumerate}
\end{theorem}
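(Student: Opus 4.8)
The plan is to treat the two statements separately, since they are logically independent, and to lean throughout on the $\mathbb{Z}$-grading of $L$ together with the admissible-pair description of graded ideals recorded above.

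For part $(1)$ I would prove both implications through the correspondence $(H,S)\mapsto I(H,S)$. To show that Condition (K) forces every ideal to be graded, I would start from an arbitrary ideal $I$, set $H=I\cap E^0$ (which is hereditary and saturated), let $S=\{v\in B_H : v^H\in I\}$, and form the graded ideal $J=I(H,S)\subseteq I$. Passing to the quotient $L/J\cong L_K(F)$ with $F=E\setminus(H,S)$, the image $\bar I=I/J$ is an ideal of $L_K(F)$ satisfying $\bar I\cap F^0=\emptyset$ by the very choice of $H$ and $S$. The two structural inputs I would invoke are that Condition (K) is inherited by every quotient graph $F$ (a closed path at a vertex $v\notin H$ cannot enter $H$, since $H$ is hereditary, so the second closed path guaranteed by (K) also survives in $F$), and that Condition (K) implies Condition (L). The Cuntz--Krieger uniqueness theorem then yields that a graph satisfying (L) has the property that every nonzero ideal meets the vertex set; hence $\bar I=0$ and $I=J$ is graded. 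For the converse I would argue contrapositively: if $E$ fails (K) it contains a cycle $c$ without (K), and after passing to a quotient graph in which the image of $c$ is a cycle with no exit, the corner $uL_K(E)u$ at the base vertex $u$ of $c$ becomes isomorphic to the Laurent polynomial ring $K[x,x^{-1}]$, with $x$ corresponding to $c$ and hence of nonzero degree. Any ideal of $K[x,x^{-1}]$ generated by a non-monomial, say $1-x$, fails to be graded, and pulling it back produces a non-graded ideal of $L$.

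For part $(2)$ I would exploit the grading directly. By Bergman's theorem for $\mathbb{Z}$-graded rings, in a form valid for rings with local units, the Jacobson radical $J(L)$ is a graded ideal. If $J(L)$ were nonzero it would equal $I(H,S)$ for some admissible pair with $H\neq\emptyset$: indeed $B_\emptyset=\emptyset$ since breaking vertices are infinite emitters, so a nonzero graded ideal must contain a vertex $v$, which is a nonzero idempotent. But the Jacobson radical of any ring with local units contains no nonzero idempotent; passing to the unital corner $uLu$ for a local unit $u$ with $uv=vu=v$ and using $J(uLu)=uJ(L)u$, an idempotent $v\in J(uLu)$ forces $v=0$ because $u-v$ would be a unit annihilating $v$. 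This contradiction gives $J(L)=0$.

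The main obstacle in each part is importing the correct structural input rather than the surrounding bookkeeping. For part $(1)$ it is the Cuntz--Krieger uniqueness theorem together with the stability of Condition (K) under passage to quotient graphs; without these the reduction to ``$\bar I$ meets $F^0$'' collapses, and on the converse side the delicate point is verifying that a cycle without (K) really can be made exit-free in a quotient. For part $(2)$ it is the gradedness of $J(L)$: once that is available the idempotent argument is immediate, so the real work lies in Bergman's graded-radical theorem and in checking that it applies in the non-unital, local-units setting of $L$.
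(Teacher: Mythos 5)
Your outline is correct, but note that the paper itself offers no proof of Theorem \ref{LPAbasics}: both parts are imported by citation from \cite{AAS} (Propositions 2.9.9 and 2.3.2), so the only possible comparison is with that source. For part (1) you follow what is essentially the standard route: cut an arbitrary ideal $I$ down to $I(H,S)$ with $H=I\cap E^0$ and $S=\{v\in B_H: v^H\in I\}$, pass to $L/I(H,S)\cong L_K(E\setminus(H,S))$, use that Condition (K) survives in quotient graphs and implies Condition (L), and invoke the Cuntz--Krieger uniqueness theorem to force $I/I(H,S)=0$; the converse, via a cycle $c$ without (K) made exit-free in the quotient by $H'=\{w\in E^0 : w\not\geq s(c)\}$ and the ideal generated by $s(c)-c$, is also the standard construction --- your corner $s(c)\,L\,s(c)\cong K[x,x^{-1}]$ is an adequate substitute for the ideal $\langle\{c^0\}\rangle\cong M_\Lambda(K[x,x^{-1}])$ of Lemma \ref{cycle-ideal-lemma}, since gradedness of $\langle s(c)-c\rangle$ would force the degree-zero component $s(c)$ into the ideal, i.e.\ $1\in\langle 1-x\rangle$ in $K[x,x^{-1}]$, which is absurd. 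For part (2) you genuinely diverge from the cited source: there semiprimitivity is deduced from the Reduction Theorem (any nonzero $a$ is compressed by suitable $\mu^{*}(\cdot)\nu$ into a nonzero scalar multiple of a vertex or a nonzero polynomial in a cycle without exits, and neither can lie in the radical), whereas you use Bergman's graded-radical theorem, the admissible-pair fact that every nonzero graded ideal contains a vertex (with the correct observation that $B_{\emptyset}=\emptyset$), and the absence of nonzero idempotents in a Jacobson radical. Your route is shorter and uses exactly the toolkit this paper already takes as background; its cost is the graded-radical theorem in the non-unital, local-unit setting, which, as you yourself flag, is the real content --- it does follow from the unital case by passing to corners $uLu$ at degree-zero local units $u$ and using $J(uLu)=uJ(L)u$, so there is no gap, but that reduction should be written out rather than merely named.
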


Let $E$ be a graph, and $c$ a cycle in $E$ based at $v$.   For $f(x) = \sum_{i=0}^n k_i x^i \in K[x]$, we denote by $f(c)$ the element $k_0v + \sum_{i=1}^n k_i c^i \in L_K(E)$.  
   
Given a ring $R$ and a subset $S$ of $R$, we denote by $\langle S \rangle$ the ideal of $R$ generated by $S$.  

\begin{lemma}\label{Product} 
Let $L=L_K(E)$ be a Leavitt path algebra, let $c$ and $d$ be distinct cycles in $E$ (that is, $c$ and $d$ have distinct sets of edges) without exits, and let $f(x), g(x)\in K[x]$ be polynomials with nonzero constant terms.
\begin{enumerate}
\item[$(1)$] \cite[Lemma 3.3]{R2} \ $\langle f(c) \rangle \langle g(c) \rangle = \langle f(c)g(c) \rangle = \langle g(c)f(c)\rangle=\langle g(c)\rangle \langle f(c)\rangle$.  

\item[$(2)$] $\langle f(c)\rangle \langle g(d)\rangle =0$.
\end{enumerate}
\end{lemma}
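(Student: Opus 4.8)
The plan is to reduce the claim to a single monomial computation and then use the no-exit hypothesis to force the vanishing of a connecting vertex. Write $v:=s(c)$ and $w:=s(d)$, so that (using (V), (E1), (E2)) we have $f(c)=vf(c)v$ and $g(d)=wg(d)w$. Since $L$ has local units, $\langle f(c)\rangle=Lf(c)L$ and $\langle g(d)\rangle=Lg(d)L$, whence $\langle f(c)\rangle\langle g(d)\rangle\subseteq Lf(c)Lg(d)L$. Thus it suffices to prove that $f(c)\,t\,g(d)=0$ for every $t\in L$. Because $f(c)\,t\,g(d)=f(c)(vtw)g(d)$ and $vLw$ is spanned by the monomials $\mu\nu^*$ with $s(\mu)=v$ and $s(\nu)=w$, I am reduced to showing $f(c)\mu\nu^*g(d)=0$ for all such paths $\mu,\nu$.

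The geometric heart of the argument is two observations. First, since $c$ has no exit, the vertex $v$ emits only the first edge of $c$; iterating, any path $\mu$ with $s(\mu)=v$ is forced to run along $c$, so all of its vertices, and in particular $r(\mu)$, lie in the vertex set $\{c^0\}$. Symmetrically $r(\nu)\in\{d^0\}$. Second, I claim $\{c^0\}\cap\{d^0\}=\emptyset$. Indeed, if $c$ and $d$ shared a vertex $u$, then (again by absence of exits) every vertex reachable from $u$ emits exactly one edge, so the forward walk starting at $u$ is deterministic; its first return to $u$ is therefore uniquely determined and simultaneously traces out both $c$ and $d$, forcing them to have the same edge set, contrary to their being distinct.

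Granting these, I finish as follows. Expanding, $f(c)\mu=\sum_i k_i\,c^i\mu$ is a $K$-linear combination of genuine paths, each starting at $v$ and having range $r(\mu)\in\{c^0\}$. On the other hand $\nu^*$ is a ghost path with source $s(\nu^*)=r(\nu)\in\{d^0\}$. For each $i$, the product $(c^i\mu)\nu^* = (c^i\mu)\,r(\mu)\,r(\nu)\,\nu^*$ therefore carries the factor $r(\mu)\,r(\nu)$, which is $0$ by (V) since $r(\mu)\neq r(\nu)$ (the vertex sets being disjoint). Hence $f(c)\mu\nu^*=0$, and a fortiori $f(c)\mu\nu^*g(d)=0$, completing the reduction and so the proof.

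I expect the only real obstacle to be the vertex-disjointness claim $\{c^0\}\cap\{d^0\}=\emptyset$; once the no-exit hypothesis is correctly exploited to make the forward walk deterministic, everything else is a routine application of the defining relations. I also note that, unlike in part $(1)$, the hypothesis that $f$ and $g$ have nonzero constant terms plays no role here; it is retained only to match the common setup of the two statements.
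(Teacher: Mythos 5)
Your proof is correct and takes essentially the same route as the paper's: reduce to monomial terms sandwiched between $f(c)$ and $g(d)$, use the no-exit hypothesis to force the connecting ranges into $\{c^0\}$ and $\{d^0\}$, and kill the product via relation (V) since $\{c^0\}\cap\{d^0\}=\emptyset$. The only differences are cosmetic --- you argue directly rather than by contradiction, and you actually prove the vertex-disjointness claim that the paper merely asserts with ``necessarily,'' which is a small improvement.
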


\begin{proof}[Proof of (2).]
First, we note that since $c$ and $d$ are distinct cycles without exits, necessarily $\{c^{0}\}\cap\{d^{0}\}=\emptyset$. Now suppose, seeking a contradiction, that $a\in \langle f(c)\rangle \langle g(d)\rangle$ is nonzero. Then $a$ is a $K$-linear combination of nonzero terms of the form $\alpha\beta^{\ast}f(c)\gamma\delta^{\ast}g(d)\mu\nu^{\ast}$, where $\alpha,\beta,\gamma,\delta,\mu,\nu$ are paths in $E$. Since $a \neq 0$, for any such term we have $s(\gamma)\in\{c^{0}\}$ and $s(\delta)=r(\delta^{\ast})\in\{d^{0}\}$. Since $c$ and $d$ have no exits, this implies that $r(\gamma)\in\{c^{0}\}$ and $r(\delta)\in\{d^{0}\}$. But then $r(\gamma)=r(\delta)\in\{c^{0}\}\cap\{d^{0}\}$ contradicts $\{c^{0}\}\cap\{d^{0}\}=\emptyset$.
\end{proof}

\begin{theorem}\label{arbitrary ideals}
Let $L=L_K(E)$ be a Leavitt path algebra, and let $I$ be an ideal of $L$, with $H=I\cap E^{0}$ and $S=\{v\in B_{H}:v^{H}\in I\}$. 
\begin{enumerate}
\item[$(1)$] \cite[Theorem 4]{R1} \ $I=I(H,S)+\sum_{i\in Y} \langle f_{i}(c_{i})\rangle$ where $Y$ is a possibly empty index set; for each $i\in Y$, $c_{i}$ is a cycle without exits in $E\backslash(H,S)$; and $f_{i}(x)\in K[x]$ is a polynomial with a nonzero constant term, which is of smallest degree such that $f_{i}(c_{i})\in I$.

\item[$(2)$] \cite[Proposition 2.4.7]{AAS} \ Using the notation of (1), in $L/I(H,S)$ we have $I/I(H,S)=\bigoplus_{i\in Y} \langle f_{i}(c_{i})\rangle\subseteq \bigoplus_{i\in Y} \langle \{c_{i}^{0}\} \rangle$.

\item[$(3)$] \cite[Corollary 2.4.16]{AAS} \ $I(H,S) \cap E^{0} = H = \langle H \rangle \cap E^{0}$.
\end{enumerate}
\end{theorem}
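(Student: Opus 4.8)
The plan is to prove the three assertions in the order (3), (1), (2), since part (3) is the vertex-counting fact that underlies the quotient arguments in part (1), and part (2) is a refinement of the conclusion of part (1) obtained via Lemma \ref{Product}. For part (3), I would invoke the order-preserving bijection between admissible pairs and graded ideals recorded above, together with the graded isomorphism $L/I(H,S)\cong L_K(E\backslash(H,S))$. Every vertex of a Leavitt path algebra is a nonzero idempotent, so each vertex of the quotient graph $E\backslash(H,S)$ is nonzero in $L_K(E\backslash(H,S))$; transporting this back through the isomorphism shows that the only vertices of $E$ lying in the kernel $I(H,S)$ are precisely those of $H$, giving $I(H,S)\cap E^0=H$. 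Applying the same reasoning to the admissible pair $(H,\emptyset)$ — and using that $H$ is hereditary and saturated, so that $\langle H\rangle=I(H,\emptyset)$ — yields $\langle H\rangle\cap E^0=H$.

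For part (1), I would first check the easy inclusion $I(H,S)\subseteq I$: by definition $H=I\cap E^0\subseteq I$, and $v^H\in I$ for each $v\in S$, so every generator of $I(H,S)$ lies in $I$. Passing to $\bar L=L/I(H,S)\cong L_K(E\backslash(H,S))$, I would show that the image $\bar I=I/I(H,S)$ contains no vertex of the quotient graph: a vertex $v$ (respectively a breaking-vertex generator $v'$) lying in $\bar I$ would force $v\in I\cap E^0=H$ (respectively $v^H\in I$, i.e. $v\in S$) by part (3), contradicting that $v\notin H$ (respectively $v\notin S$) in the quotient. The substantive step is then to show that an ideal of a Leavitt path algebra meeting $E^0$ trivially is generated by elements of the form $f(c)$ with $c$ a cycle without exits and $f$ of nonzero constant term. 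Here I would invoke the Reduction Theorem (equivalently, the Cuntz--Krieger uniqueness theorem): any nonzero element of such an ideal can be brought, by left- and right-multiplication by suitable paths, to an element supported on a single cycle without exits, where the attached local subalgebra behaves like a polynomial algebra in the cycle. Choosing for each relevant cycle $c_i$ a polynomial $f_i$ of least degree with nonzero constant term such that $f_i(c_i)\in\bar I$ then exhibits $\bar I=\sum_{i\in Y}\langle f_i(c_i)\rangle$, and lifting along the quotient map gives the stated decomposition of $I$.

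For part (2), the equality $I/I(H,S)=\sum_{i\in Y}\langle f_i(c_i)\rangle$ is immediate from part (1), so it remains to upgrade the sum to a direct sum and to record the containment. Since each $f_i(c_i)$ is a $K$-combination of $c_i^0$ and powers of $c_i$, we have $\langle f_i(c_i)\rangle\subseteq\langle\{c_i^0\}\rangle$. The cycles $c_i$ being distinct and without exits, their vertex sets $\{c_i^0\}$ are pairwise disjoint, exactly as in the proof of Lemma \ref{Product}(2); consequently the ideals $\langle\{c_i^0\}\rangle$ are supported on disjoint vertex sets and the vanishing of cross products supplied by Lemma \ref{Product}(2) forces $\sum_{i\in Y}\langle f_i(c_i)\rangle$ to be a direct sum contained in $\bigoplus_{i\in Y}\langle\{c_i^0\}\rangle$.

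I expect the main obstacle to be the substantive step flagged in part (1): proving that an ideal meeting $E^0$ trivially is carried by cycles without exits. This is the genuine structural input, requiring the Reduction Theorem and a careful analysis of the subalgebra attached to an isolated cycle; once it is available, the remaining pieces — the inclusion $I(H,S)\subseteq I$, the absence of vertices in the quotient, the vertex count in part (3), and the directness in part (2) — are routine.
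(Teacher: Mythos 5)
First, a point of comparison: the paper does not prove this theorem at all. All three parts are imported from the literature --- part (1) is \cite[Theorem 4]{R1}, and parts (2) and (3) are \cite[Proposition 2.4.7]{AAS} and \cite[Corollary 2.4.16]{AAS} --- so there is no in-paper argument to measure your proposal against. Judged against the cited sources, your outline reconstructs their strategy faithfully: establish the vertex count of part (3) via the graded isomorphism $L/I(H,S)\cong L_K(E\backslash(H,S))$, use it to see that $\bar I=I/I(H,S)$ meets the vertex set of the quotient graph trivially, and then invoke the Reduction Theorem to carry $\bar I$ into the components attached to cycles without exits. That is indeed how \cite{R1} and \cite[Theorem 2.8.10]{AAS} proceed.

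Two genuine gaps remain if the proposal is to stand as a proof rather than a plan. First, the step you flag as ``the main obstacle'' is not a technicality that can be outsourced: it is essentially the entire content of \cite[Theorem 4]{R1}. After reduction one must still show (i) that every element of $\bar I$ actually lies in $\bigoplus_{i\in Y}\langle\{c_i^0\}\rangle$ --- not merely that each nonzero element can be compressed into some such component by multiplying by paths --- and (ii) that $\bar I\cap\langle\{c_i^0\}\rangle$ is generated by a single $f_i(c_i)$ with $f_i(x)$ of minimal degree, which requires Lemma \ref{cycle-ideal-lemma}, Remark \ref{cycle-ideal-remark}, Proposition \ref{Morita equivalence}, and the fact that $K[x,x^{-1}]$ is a principal ideal domain. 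Second, in part (2) the directness of the sum does not follow from the vanishing of cross products alone: in a general ring, ideals with pairwise zero products can have nonzero pairwise intersections (such intersections are merely square-zero). What rescues the argument is semiprimitivity: if $a$ lies in $\langle f_i(c_i)\rangle\cap\sum_{j\neq i}\langle f_j(c_j)\rangle$, then the square of the ideal generated by $a$ is contained in $\langle f_i(c_i)\rangle\cdot\sum_{j\neq i}\langle f_j(c_j)\rangle=\{0\}$ by Lemma \ref{Product}(2), and hence that ideal is zero because the Jacobson radical of a Leavitt path algebra vanishes (Theorem \ref{LPAbasics}(2)). This is exactly the argument pattern the paper itself uses in Remark \ref{Uniqueness of cycles}, and your sketch needs it spelled out.
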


If $I$ is an ideal as in Theorem \ref{arbitrary ideals}(1), then $I(H,S)$, also denoted $\mathrm{gr}(I)$, is called the \textit{graded part} of $I$.  

\begin{remark} \label{Uniqueness of cycles} 
We  point out that in the representation of a non-graded ideal $I=I(H,S)+ \sum_{i\in Y} \langle f_{i}(c_{i})\rangle$ given in Theorem \ref{arbitrary ideals}(1), the cycles $c_{i}$ are uniquely determined (up to a permutation of their vertices) by $I$. The uniqueness of the cycles can be derived from \cite[Proposition 6]{R1} and also \cite[Theorem 2.8.10]{AAS}. However, because of the importance of this fact  in our subsequent arguments, we shall outline the justification. 

Suppose that $I=I(H,S)+\sum_{j\in Y'} \langle g_{j}(d_{j})\rangle$ is another representation of $I$, where each $d_{j}$ is a cycle without exits in $E\backslash(H,S)$ and each $g_{j}(x)\in K[x]$ has a nonzero constant term. Then by Theorem \ref{arbitrary ideals}(2), we also have 
\[
I/I(H,S)= \bigoplus_{j\in Y'} \langle g_{j}(d_{j})\rangle\subseteq \bigoplus_{j\in Y'} \langle \{d_{j}^{0}\} \rangle.
\] 
Suppose that $d_{j}$ does not equal $c_{i}$ for any $i\in Y$. Then, by Lemma \ref{Product}(2),
\[
\langle g_{j}(d_{j})\rangle\cdot I/I(H,S)=\langle g_{j}(d_{j})\rangle\cdot \bigoplus_{i\in Y} \langle f_{i}(c_{i})\rangle=0,
\]
which yields that  $\langle g_{j}(d_{j})\rangle^{2}=0$ in the Leavitt path algebra $L_{K}
(E\backslash(H,S))$. This implies that the Jacobson radical of $L_{K}
(E\backslash(H,S))$ contains $\langle g_{j}(d_{j})\rangle$, and is hence nonzero, contrary to Theorem \ref{LPAbasics}(2).  Hence each $d_{j}$ is equal to some $c_{i}$ and conversely.  
\end{remark}

\begin{lemma} \label{Product=Intersection} \cite[Lemma 3.1]{R2}
Let $I$ be a graded ideal of a Leavitt path algebra $L$.
\begin{enumerate}
\item[$(1)$] $IJ=I\cap J=J\cap I=JI$ for any ideal $J$ of $L$. In particular, for any ideal $J \subseteq I$, we have $IJ=J$, and $I^{2}=I$.

\item[$(2)$] Let $I_{1},\dots, I_{n}$ be ideals of $L$, for some positive integer $n$. Then $I=I_{1}\cdots I_{n}$ if and only if $I=I_{1}\cap\cdots\cap I_{n}$.
\end{enumerate}
\end{lemma}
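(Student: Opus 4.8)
The plan for (1) is to reduce everything to the fact that a graded ideal carries local units lying inside itself. In any ring one has $IJ\subseteq I\cap J$ and $JI\subseteq I\cap J$, while $I\cap J=J\cap I$ is automatic; so it suffices to prove the reverse inclusions $I\cap J\subseteq IJ$ and $I\cap J\subseteq JI$. Since $I=I(H,S)$ is graded, it is generated by the idempotents $\{v:v\in H\}\cup\{v^{H}:v\in S\}$, and I would invoke the standard structural fact that $I$ is then a ring with local units: for each $a\in I$ there is an idempotent $\epsilon\in I$ with $\epsilon a=a=a\epsilon$. Given $a\in I\cap J$, picking such an $\epsilon\in I$ and using $a\in J$ gives $a=\epsilon a\in IJ$ and $a=a\epsilon\in JI$, which yields both reverse inclusions and hence $IJ=I\cap J=J\cap I=JI$. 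The two ``in particular'' assertions are then immediate: taking any $J\subseteq I$ gives $IJ=I\cap J=J$, and taking $J=I$ gives $I^{2}=I$.

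For (2) I would argue the two implications separately, using only part (1), associativity of the ideal product, and the trivial inclusion $I_{1}\cdots I_{n}\subseteq I_{1}\cap\cdots\cap I_{n}$. Suppose first that the graded ideal $I$ equals $I_{1}\cap\cdots\cap I_{n}$. Then $I\subseteq I_{j}$ for each $j$, so by part (1) we have $II_{j}=I\cap I_{j}=I$ for every $j$. Beginning with $I=II_{n}$ and repeatedly substituting $I=II_{j}$ into the leftmost factor, I obtain $I=I\,I_{1}I_{2}\cdots I_{n}\subseteq I_{1}I_{2}\cdots I_{n}$; together with $I_{1}\cdots I_{n}\subseteq I_{1}\cap\cdots\cap I_{n}=I$ this forces $I=I_{1}\cdots I_{n}$.

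Conversely, suppose $I=I_{1}\cdots I_{n}$ is graded, and set $A=I_{1}\cap\cdots\cap I_{n}$, so that $I\subseteq A$. Since $A\subseteq I_{k}$ for every $k$, multiplying $n$ copies gives $A^{n}\subseteq I_{1}I_{2}\cdots I_{n}=I$. Now $L/I\cong L_{K}(E\backslash(H,S))$ is itself a Leavitt path algebra, and $\overline{A}:=A/I$ is an ideal of $L/I$ with $\overline{A}^{\,n}=0$; being nilpotent, $\overline{A}$ lies in the Jacobson radical of $L/I$, which is zero by Theorem \ref{LPAbasics}(2). Hence $\overline{A}=0$, i.e.\ $A\subseteq I$, and therefore $I=A=I_{1}\cap\cdots\cap I_{n}$. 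This last step is exactly the nilpotent-ideal argument already used in Remark \ref{Uniqueness of cycles}.

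The main obstacle is the structural input to part (1): the claim that $I(H,S)$ has local units given by idempotents of $I(H,S)$ itself, not merely of $L$. This is where the precise form of the generators $v$ and $v^{H}$ and the Cuntz--Krieger relations enter, and I would establish it by citing the concrete description of graded ideals in \cite{AAS} (equivalently, the fact that $I(H,S)$ is again a Leavitt path algebra up to isomorphism). Once part (1) is available, part (2) is purely formal bookkeeping together with the zero-Jacobson-radical fact, so I expect no further difficulty there.
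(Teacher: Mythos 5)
Your proof is correct, but there is nothing internal to compare it against: the paper does not prove this lemma, quoting it verbatim from \cite[Lemma 3.1]{R2}. Your argument for (1) is essentially the standard one from that source: the entire content is the structural fact you flag, namely that the graded ideal $I=I(H,S)$ is a ring whose local units are idempotents lying in $I$ itself (this follows, as you suggest, from the realization of $I(H,S)$ as a Leavitt path algebra of an auxiliary ``hedgehog'' graph in \cite{AAS}), after which $I\cap J\subseteq IJ$ and $I\cap J\subseteq JI$ are immediate. Your proof of (2) is also sound: the direction ``intersection $\Rightarrow$ product'' is pure bookkeeping from (1), and in the direction ``product $\Rightarrow$ intersection'' the key step --- $A^{n}\subseteq I$ with $A=I_{1}\cap\cdots\cap I_{n}$, so the nilpotent ideal $A/I$ of $L/I\cong L_{K}(E\setminus(H,S))$ must vanish since the Jacobson radical of a Leavitt path algebra is zero (Theorem~\ref{LPAbasics}(2)) --- is exactly the device the paper itself employs in Remark~\ref{Uniqueness of cycles}, and the fact that nil ideals lie in the Jacobson radical is valid for rings without identity via quasi-regularity, so no unitality issue arises. (Alternatively, that step could be phrased as the semiprimeness of graded ideals, cf.\ Theorem~\ref{Radical ideals}.) In short: the proposal is complete modulo the two standard structural citations you explicitly identify, and it follows the same route as the cited source.
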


For a ring $R$ and a nonempty set $\Lambda$, the ring consisting of those $\Lambda \times \Lambda$ matrices over $R$ having at most finitely many nonzero entries is denoted by  ${M}_\Lambda(R)$.  

\begin{lemma} \cite[Lemma 2.7.1]{AAS} \label{cycle-ideal-lemma} 
Let $L=L_K(E)$ be a Leavitt path algebra, let $c$ be a cycle without exits in $E$, and let $M = \langle \{c^{0}\}\rangle$. Then $M \cong M_{\Lambda}(K[x,x^{-1}])$ for some index set $\Lambda$.
\end{lemma}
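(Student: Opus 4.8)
The plan is to realize $M$ concretely through a system of matrix units built from the paths that terminate on the cycle. Write $c = e_{1}\cdots e_{n}$ with $v_{i} := s(e_{i})$ and $v := v_{1}$, so that $\{c^{0}\} = \{v_{1},\dots,v_{n}\}$ and $r(e_{n}) = v_{1}$. First I would record what ``no exit'' buys us: each $v_{i}$ is regular with $s^{-1}(v_{i}) = \{e_{i}\}$, so (CK2) gives $v_{i} = e_{i}e_{i}^{*}$ and (CK1) gives $e_{i}^{*}e_{i} = v_{i+1}$. Telescoping these identities yields $c^{*}c = v = cc^{*}$, and, crucially, that the only closed path based at a cycle vertex is a power of $c$; this rigidity is exactly where the hypothesis is used. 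Writing $q_{j} := e_{j}\cdots e_{n}$ (a path from $v_{j}$ to $v_{1}$), one checks $v_{j} = q_{j}q_{j}^{*}$ and $v_{j} = q_{j}^{*}v_{1}q_{j}$, so every $v_{j}$ lies in $\langle v_{1}\rangle$ and hence $M = \langle\{c^{0}\}\rangle = \langle v\rangle = LvL$.

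Next I would compute the corner ring $vLv$. Since $v$ lies on a cycle without exits, any path with source $v$ must run along $c$, so a spanning monomial $\mu\nu^{*}$ of $vLv$ has $\mu,\nu$ of the form $c^{a}(\text{initial segment of }c)$; the requirement $r(\mu) = r(\nu)$ forces the two initial segments to coincide and collapse via (CK1)/(CK2), leaving $\mu\nu^{*} = c^{a}(c^{*})^{b}$. Using $cc^{*} = c^{*}c = v$ this reduces to a single power of $c$ or of $c^{*}$, so $vLv$ is spanned by $\{c^{k},(c^{*})^{k} : k\ge 0\}$. Because $c$ is invertible in $vLv$ with inverse $c^{*}$, and these powers are $K$-linearly independent (they lie in distinct homogeneous components of the $\mathbb{Z}$-grading), the assignment $c\mapsto x$ extends to a $K$-algebra isomorphism $\psi\colon vLv \xrightarrow{\ \sim\ } K[x,x^{-1}]$.

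For the matrix structure, let $\Lambda$ be the set of all paths $\gamma$ with $r(\gamma) = v$ that are \emph{cycle-reduced}, meaning $\gamma$ cannot be written as $\gamma' c$; the trivial path $v$ belongs to $\Lambda$. The key relation is orthonormality: for $\gamma,\gamma'\in\Lambda$, matching edges via (CK1) shows $\gamma^{*}\gamma'$ is nonzero only when one is an initial segment of the other, in which case the quotient is a closed path at $v$, hence a power of $c$; cycle-reducedness then forces $\gamma = \gamma'$, so $\gamma^{*}\gamma' = \delta_{\gamma,\gamma'}\,v$. Moreover every path ending at $v$ factors uniquely as $\gamma c^{k}$ with $\gamma\in\Lambda$ and $k\ge 0$ (strip terminal copies of $c$). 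Using $v_{j} = q_{j}q_{j}^{*}$ to rewrite any spanning monomial of $M = LvL$ with common range $v$, and then substituting $\alpha = \gamma c^{a}$, $\beta = \gamma' c^{b}$ and collapsing $c^{a}(c^{*})^{b}$ as above, shows that $M$ is spanned by $\{\,\gamma\, z\, (\gamma')^{*} : \gamma,\gamma'\in\Lambda,\ z\in vLv\,\}$. I would then define
\[
\phi\colon M \longrightarrow M_{\Lambda}(K[x,x^{-1}]),\qquad \gamma\, z\,(\gamma')^{*}\ \longmapsto\ \psi(z)\,E_{\gamma\gamma'},
\]
extended $K$-linearly, where $E_{\gamma\gamma'}$ denotes the standard matrix unit. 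Orthonormality gives $(\gamma z (\gamma')^{*})(\delta z' (\delta')^{*}) = \delta_{\gamma',\delta}\,\gamma(zz')(\delta')^{*}$, which matches matrix multiplication, so $\phi$ is a homomorphism; it is surjective because $\psi(z)E_{\gamma\gamma'} = \phi(\gamma\,\psi^{-1}(\cdot)\,(\gamma')^{*})$.

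The one genuinely delicate point, and what I expect to be the main obstacle, is injectivity: that the elements $\gamma\,c^{k}\,(\gamma')^{*}$ (reading $c^{k}$ as $(c^{*})^{-k}$ for $k<0$) are $K$-linearly independent in $M$. This is a normal-form statement for the relevant monomials of $L$, and it would fail only if some (CK2) relation identified distinct such terms; but at the cycle vertices (CK2) reads merely $v_{i} = e_{i}e_{i}^{*}$ and introduces no nontrivial identifications. I would establish independence using the $\mathbb{Z}$-grading (the degree of $\gamma c^{k}(\gamma')^{*}$ is $|\gamma| - |\gamma'| + kn$) together with the uniqueness of the factorization $\gamma c^{k}$, ultimately reducing to the linear independence already used to identify $vLv$ with $K[x,x^{-1}]$. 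Granting this, $\phi$ is an isomorphism and $M \cong M_{\Lambda}(K[x,x^{-1}])$, as claimed.
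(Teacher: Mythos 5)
Your argument is correct and follows essentially the same route as the proof of \cite[Lemma 2.7.1]{AAS}, which the paper cites in lieu of giving its own proof: matrix units indexed by the paths ending at the base vertex $v$ that do not terminate in a full traversal of $c$, the corner identification $vLv \cong K[x,x^{-1}]$ via $c \mapsto x$, and linear independence of the elements $\gamma c^{k}(\gamma')^{*}$ (the delicate point you flag, which is settled exactly as you suggest, by sandwiching a vanishing combination between $\delta^{*}$ and $\delta'$ to reduce to independence of the powers of $c$ in $vLv$). Note also that your isomorphism sends $c$ to $xE_{vv}$, which is precisely the normalization the paper records in Remark \ref{cycle-ideal-remark}.
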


\begin{remark} \label{cycle-ideal-remark} 
We note, for future reference, that in the isomorphism $M \cong M_{\Lambda}(K[x,x^{-1}])$ constructed in the proof of \cite[Lemma 2.7.1]{AAS}, $c \in M$ is sent to a matrix in $M_{\Lambda}(K[x,x^{-1}])$ which has $x$ as one of the diagonal entries and zeros elsewhere.
\end{remark}

\begin{theorem} \label{specificprime}  \cite[Theorem 3.12]{R0}
Let $L=L_K(E)$ be a Leavitt path algebra, let $I$ be a proper ideal of $L$, and let $H = I \cap E^0$. Then $I$ is a prime ideal if and only if $I$ satisfies one of the following conditions.
\begin{enumerate}
\item[$(1)$] $I = I(H, B_{H})$, and $E^0\setminus H$ is downward directed. 
\item[$(2)$] $I = I(H, B_{H}\setminus \{u\})$ for some $u \in B_H$, and $E^0\setminus H = M(u)$.
\item[$(3)$] $I = I(H, B_{H}) + \langle f(c) \rangle$ where $c$ is a cycle without (K), $E^0\setminus H = M(s(c))$, and $f(x) \in K[x, x^{-1}]$ is an irreducible polynomial.
\end{enumerate}
\end{theorem}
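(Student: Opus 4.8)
The plan is to characterize primeness of $I$ through the ring-theoretic fact that $I$ is prime if and only if the quotient $L/I$ is a prime ring, combined with the standard criterion that a Leavitt path algebra $L_K(F)$ is prime exactly when its vertex set $F^0$ is downward directed. I would first dispose of the graded case. If $I$ is graded, then $I = I(H,S)$ for some $S\subseteq B_H$, and $L/I\cong L_K(E\backslash(H,S))$. Inspecting the quotient graph, the ``primed'' vertices $\{v' : v\in B_H\setminus S\}$ are all sinks, so downward-directedness of $(E\backslash(H,S))^0$ forces at most one of them to exist. If there are none, then $S = B_H$ and the condition becomes downward-directedness of $E^0\setminus H$, giving (1); if there is exactly one, say $u'$, then $S = B_H\setminus\{u\}$, and, using that $u$ is a breaking vertex emitting into $E^0\setminus H$, downward-directedness is equivalent to every vertex reaching $u$, i.e. $E^0\setminus H = M(u)$, giving (2). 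Conversely, in both these configurations the quotient graph is downward directed, so $L/I$ is prime.

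For the non-graded case I would use the structure theorem (Theorem \ref{arbitrary ideals}) to write $I = I(H,S) + \sum_{i\in Y}\langle f_i(c_i)\rangle$ with $Y\neq\emptyset$, pass to $\bar L = L/I(H,S)\cong L_K(E\backslash(H,S))$ where $\bar I = \bigoplus_{i\in Y}\langle f_i(c_i)\rangle$, and observe that $I$ is prime iff $\bar I$ is prime in $\bar L$. Several reductions then follow from the multiplicative lemmas. First, $|Y|=1$: for distinct cycles $c_1\neq c_2$ the ideals $\langle\{c_1^0\}\rangle$ and $\langle\{c_2^0\}\rangle$ are not contained in $\bar I$ yet multiply to $0$ by Lemma \ref{Product}(2), contradicting primeness. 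Writing $I = I(H,S)+\langle f(c)\rangle$, the same device applied to a primed sink $u'$ (whose ideal annihilates $\langle\{c^0\}\rangle$) forces $S = B_H$. Irreducibility of $f$ then follows from Lemma \ref{Product}(1): a proper factorization $f = gh$ would give $\langle g(c)\rangle\langle h(c)\rangle = \langle f(c)\rangle = \bar I$ with both factors strictly larger than $\bar I$. Finally, if some vertex $w$ failed to reach $c$, then $\langle w\rangle$ and $\langle\{c^0\}\rangle$ would have disjoint hereditary saturated closures, hence product $0$, again contradicting primeness; so every vertex of $E^0\setminus H$ reaches $c$, which together with heredity of $H$ yields $E^0\setminus H = M(s(c))$, and this in turn makes ``$c$ without exits in the quotient'' equivalent to ``$c$ without (K) in $E$.'' This produces exactly (3).

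The remaining, and I expect hardest, step is the converse of (3): showing that $I = I(H,B_H)+\langle f(c)\rangle$ really is prime when $f$ is irreducible and $E^0\setminus H = M(s(c))$. Here I would pass to $\bar L = L_K(E\backslash(H,B_H))$, which is prime because $F^0 = E^0\setminus H = M(s(c))$ is downward directed, and analyze $\bar L/\langle f(c)\rangle$. Using Lemma \ref{cycle-ideal-lemma} and Remark \ref{cycle-ideal-remark}, the ideal $M = \langle\{c^0\}\rangle\cong M_\Lambda(K[x,x^{-1}])$ carries $\langle f(c)\rangle$ onto $M_\Lambda(\langle f(x)\rangle)$, so $M/\langle f(c)\rangle\cong M_\Lambda(K[x,x^{-1}]/\langle f\rangle)$ is a simple ring (as $\langle f\rangle$ is maximal), whence its image is a minimal, idempotent ideal of $\bar L/\langle f(c)\rangle$. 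The key lemma to establish is that this image is also \emph{essential}: because every vertex reaches $c$ and $c$ is the unique exit-free cycle of $F$, any ideal of $\bar L$ properly containing $\langle f(c)\rangle$ must meet $M$ strictly above $\langle f(c)\rangle$. Granting essentiality of a minimal idempotent ideal, any two nonzero ideals of $\bar L/\langle f(c)\rangle$ each contain it and hence have nonzero product, so the quotient is prime. The technical heart is precisely this essentiality-plus-minimality analysis inside $M_\Lambda(K[x,x^{-1}])$, together with the verification that $c$ is the only cycle without exits in $F$; the forward direction of (3) and the graded cases are comparatively routine bookkeeping with admissible pairs and the multiplicative lemmas.
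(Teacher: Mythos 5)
The paper does not actually prove this statement: Theorem \ref{specificprime} is imported verbatim from \cite[Theorem 3.12]{R0}, so there is no internal proof to compare against. Judged on its own terms, your reconstruction is sound and follows what is essentially the route of the cited source: reduce primeness of $I$ to primeness of a quotient ring, use the graded structure theory (Theorem \ref{arbitrary ideals}) to split into graded and non-graded cases, and exploit the annihilation lemmas (Lemma \ref{Product}) plus the matrix-ring picture (Lemma \ref{cycle-ideal-lemma}, Remark \ref{cycle-ideal-remark}, Proposition \ref{Morita equivalence}) for the non-graded case. Your graded analysis, including the observation that primed vertices are sinks and that the breaking-vertex property of $u$ supplies the closed path making $u\geq u'$ in the quotient (without which $E^0\setminus H = M(u)$ would not imply downward directedness), is correct; so is the forward non-graded analysis forcing $|Y|=1$, $S=B_H$, irreducibility of $f$, and $E^0\setminus H=M(s(c))$. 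Two caveats are worth making explicit. First, the ``standard criterion'' that $L_K(F)$ is a prime ring iff $F^0$ is downward directed is literally the $I=\{0\}$ instance of the theorem you are proving; it is a genuinely prior result in the literature (and is what \cite{R0} builds on), but your write-up should cite it as such rather than fold it in silently, since otherwise the argument is circular at its base case. Second, the essentiality lemma you flag as the technical heart does go through, and the clean way to close it is again Theorem \ref{arbitrary ideals}: if $J\supsetneq\langle f(c)\rangle$ is an ideal of $\bar{L}=L_K(E\setminus(H,B_H))$, then either $J$ contains a vertex, in which case heredity of $J\cap \bar{L}^{\,0}$ together with $F^0=M(s(c))$ forces $\{c^0\}\subseteq J$ and hence $M\subseteq J$; or $J$ contains no vertex, in which case $\mathrm{gr}(J)=I(\emptyset,\emptyset)=\{0\}$ (note $B_\emptyset=\emptyset$) and $J=\langle g(c)\rangle$ for some $g$ properly dividing $f$, using that $c$ is the \emph{unique} exit-free cycle in the downward directed graph $F$ (two exit-free cycles would have to reach a common vertex, which neither can leave). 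Either way $J\cap M\supsetneq\langle f(c)\rangle$, which is exactly what your minimal-idempotent-essential-ideal argument needs.
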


\begin{theorem} \label{Radical ideals} \cite[Theorem 3.3]{AGMR} 
A proper ideal $I$ of a Leavitt path algebra $L$ is semiprime if and only if $I=I(H,S)+\sum_{i\in Y} \langle f_{i}(c_{i})\rangle$ as indicated in Theorem \ref{arbitrary ideals}(1), with the additional condition that each $f_{i}(x)\in K[x]$ is a square-free polynomial.
\end{theorem}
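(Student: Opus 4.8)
The plan is to reduce the statement to a computation about ideals of the Laurent polynomial ring $K[x,x^{-1}]$, exploiting the structural description in Theorem~\ref{arbitrary ideals} together with the fact that Leavitt path algebras are semiprime rings (a consequence of Theorem~\ref{LPAbasics}(2)). First I would pass to the quotient $\bar L := L/I(H,S) \cong L_K(E\setminus(H,S))$. Since $I(H,S)\subseteq I$, the third isomorphism theorem gives $L/I \cong \bar L/\bar I$ with $\bar I := I/I(H,S)$; as an ideal is semiprime exactly when the corresponding quotient ring has no nonzero ideal $N$ with $N^2=0$, it follows that $I$ is semiprime in $L$ if and only if $\bar I$ is semiprime in $\bar L$. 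By Theorem~\ref{arbitrary ideals}(2) we have $\bar I=\bigoplus_{i\in Y}\langle f_i(c_i)\rangle \subseteq M:=\bigoplus_{i\in Y} M_i$, where $M_i=\langle\{c_i^0\}\rangle$ and the $c_i$ are distinct cycles without exits in $E\setminus(H,S)$; by the argument proving Lemma~\ref{Product}(2) (disjointness of the vertex sets $\{c_i^0\}$) one has the orthogonality $M_iM_j=0$ for $i\neq j$. Finally, by Lemma~\ref{cycle-ideal-lemma} and Remark~\ref{cycle-ideal-remark}, $M_i\cong M_{\Lambda_i}(K[x,x^{-1}])$ in such a way that $f_i(c_i)$ is carried to a matrix with $f_i(x)$ in a single diagonal entry and zeros elsewhere, whence $\langle f_i(c_i)\rangle$ corresponds to $M_{\Lambda_i}\big((f_i(x))\big)$. (When $Y=\emptyset$, i.e.\ $I$ is graded, $\bar I=0$ is automatically semiprime and the square-freeness condition is vacuous, so this case is covered.)

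For the forward implication I would argue the contrapositive. Suppose some $f_i$ is not square-free; write $f_i=\prod_j p_{ij}^{a_{ij}}$ with the $p_{ij}$ distinct irreducibles (each with nonzero constant term, since $f_i$ has one) and some $a_{ij}\ge 2$, and set $h_i:=\prod_j p_{ij}^{\lceil a_{ij}/2\rceil}$, so that $h_i$ has nonzero constant term and $\deg h_i<\deg f_i$. Applying Lemma~\ref{Product}(1) in $\bar L$ gives $\langle h_i(c_i)\rangle^2=\langle (h_i^2)(c_i)\rangle$; since $f_i\mid h_i^2$ we obtain $\langle h_i(c_i)\rangle^2\subseteq\langle f_i(c_i)\rangle\subseteq\bar I$, whereas $h_i(c_i)\notin\bar I$ because $f_i\nmid h_i$ (equivalently, by the minimality of $\deg f_i$ in Theorem~\ref{arbitrary ideals}(1)). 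Thus $\langle h_i(c_i)\rangle$ witnesses the failure of semiprimeness of $\bar I$, hence of $I$.

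For the converse, assume every $f_i$ is square-free and let $\bar J$ be an ideal of $\bar L$ with $\bar J^2\subseteq\bar I$. Since $M$ is a graded ideal, $\bar L/M$ is again a Leavitt path algebra and hence semiprime; the image of $\bar J$ there has square zero, so it vanishes and $\bar J\subseteq M$. Writing any $a\in\bar J$ as a finite sum $\sum_i b_i$ with $b_i\in M_i$ and multiplying by a local unit $u_i\in M_i$ fixing $b_i$, the orthogonality $u_ib_j\in M_iM_j=0$ for $j\neq i$ forces $b_i=u_ia\in\bar J$; the same device applied to $\bar I$ shows $\bar I\cap M_i=\langle f_i(c_i)\rangle$. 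Combining these, the cross terms in $\bar J^2$ vanish and one extracts $(\bar J\cap M_i)^2\subseteq\langle f_i(c_i)\rangle$ for each $i$. Transporting $\bar J\cap M_i$ through the isomorphism $M_i\cong M_{\Lambda_i}(K[x,x^{-1}])$ and using that the two-sided ideals of a finite-support matrix ring over a commutative ring $R$ are exactly the $M_{\Lambda_i}(J_0)$ for ideals $J_0$ of $R$, this reduces to the assertion that $J_0^2\subseteq (f_i)$ implies $J_0\subseteq (f_i)$ in $K[x,x^{-1}]$, which holds precisely because $f_i$ is square-free (so $(f_i)$ is a radical ideal of the principal ideal domain $K[x,x^{-1}]$, noting $x\nmid f_i$). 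Thus $\bar J\cap M_i\subseteq\langle f_i(c_i)\rangle$ for all $i$, and summing gives $\bar J\subseteq\bar I$, so $\bar I$ is semiprime.

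The translations to $K[x,x^{-1}]$ and the forward construction are routine; the step I expect to demand the most care is the converse's passage from an arbitrary ideal $\bar J$ to its components $\bar J\cap M_i$. Concretely, the delicate points are justifying $\bar J\subseteq M$ (via semiprimeness of the quotient Leavitt path algebra $\bar L/M$), running the local-unit/orthogonality argument to split $\bar J$ and to identify $\bar I\cap M_i$, and correctly matching $\bar L$-ideals contained in $M_i$ with honest two-sided ideals of the matrix ring $M_{\Lambda_i}(K[x,x^{-1}])$ so that the square-freeness criterion in $K[x,x^{-1}]$ applies.
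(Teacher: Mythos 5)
This statement is not proved in the paper at all: it is quoted from \cite[Theorem 3.3]{AGMR}, so there is no internal argument to compare yours against. That said, your proof is correct, and it has the virtue of being assembled entirely from results the paper does state, making the result self-contained: the reduction of semiprimeness of $I$ to semiprimeness of $\bar{I}=I/I(H,S)$ in $\bar{L}\cong L_K(E\setminus(H,S))$; the splitting $\bar{J}=\bigoplus_i(\bar{J}\cap M_i)$ of any ideal $\bar{J}\subseteq M=\bigoplus_i\langle\{c_i^0\}\rangle$ using local units and the orthogonality $M_iM_j=0$ (which you can also get more cheaply from $M_iM_j\subseteq M_i\cap M_j=\{0\}$, the directness of the sum being part of Theorem \ref{arbitrary ideals}(2)); the step $\bar{J}\subseteq M$ via semiprimeness of the Leavitt path algebra $\bar{L}/M$, which is precisely the Jacobson-radical argument the paper itself runs in Remark \ref{Uniqueness of cycles}; and the Morita transfer to $K[x,x^{-1}]$, where ``square-free with nonzero constant term'' is equivalent to $(f_i)$ being a radical ideal of that PID --- the same use of Remark \ref{cycle-ideal-remark} and Proposition \ref{Morita equivalence} that the paper makes inside Lemma \ref{intersect-lemma} and Theorem \ref{Product of semiprimes}. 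One point you should justify rather than assert: Remark \ref{cycle-ideal-remark} only records where $c_i$ is sent, whereas your identification $\langle f_i(c_i)\rangle\leftrightarrow M_{\Lambda_i}\bigl((f_i(x))\bigr)$ also requires knowing the image of the vertex $v=s(c_i)$, since $f_i(c_i)=k_0v+\sum_m k_mc_i^m$. This does follow from the Remark: because $c_i$ has no exits, $c_ic_i^*=v=c_i^*c_i$, so the image $V$ of $v$ equals both $XW$ and $WX$ (with $X$ the image of $c_i$, supported in a single diagonal entry, and $W$ the image of $c_i^*$); hence $V$ is supported in that same diagonal position, and being a nonzero idempotent it is the corresponding matrix unit, giving $f_i(c_i)\mapsto f_i(x)$ in that entry. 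With that detail filled in, both directions --- the contrapositive witness $h_i=\prod_j p_{ij}^{\lceil a_{ij}/2\rceil}$ (which is not in $I$ by the degree-minimality clause of Theorem \ref{arbitrary ideals}(1)), and the converse via $J_0^2\subseteq(f_i)\Rightarrow J_0\subseteq(f_i)$ in $K[x,x^{-1}]$ --- are complete.
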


We conclude this section with a result about arbitrary rings.

\begin{proposition}\label{Morita equivalence} \cite[Proposition 1]{EKKRR}
Let $R$ be a ring with local units and $\Lambda$ a nonempty set.
\begin{enumerate}
\item[$(1)$] Every ideal of  $M_{\Lambda}(R)$ is of the form $M_{\Lambda}(I)$ for some ideal $I$ of $R$. The map $I \longmapsto M_{\Lambda}(I)$ defines a lattice isomorphism between the lattice of ideals of $R$ and the lattice of ideals of $M_{\Lambda}(R)$.

\item[$(2)$] For any two ideals $I$ and $J$ of $R$, we have $M_{\Lambda}(IJ)=M_{\Lambda} (I)M_{\Lambda}(J)$.
\end{enumerate}
\end{proposition}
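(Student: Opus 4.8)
The plan is to reduce everything to manipulations with the elementary matrices $e_{ij}(r) \in M_\Lambda(R)$, meaning the matrix with $r \in R$ in the $(i,j)$ slot and zeros elsewhere. These satisfy $e_{ij}(r)\,e_{kl}(s) = \delta_{jk}\,e_{il}(rs)$, and every element of $M_\Lambda(R)$ is a finite sum of such elementary matrices. The one feature of $R$ I would lean on repeatedly is the presence of local units: for any single $r \in R$ I may choose an idempotent $\epsilon \in R$ with $\epsilon r = r = r\epsilon$, and then the identity $e_{ij}(r) = e_{i1}(\epsilon)\,e_{11}(r)\,e_{1j}(\epsilon)$ lets me shuttle any entry to or from the $(1,1)$ slot without altering its value. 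This ``shuttling identity'' is the engine of both parts.

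For part $(1)$, given an ideal $\mathcal I$ of $M_\Lambda(R)$, I would set $I := \{\, r \in R : e_{11}(r) \in \mathcal I \,\}$. First I would verify that $I$ is a two-sided ideal of $R$: additivity is immediate, and for $a \in R$, $r \in I$ the products $e_{11}(a)e_{11}(r) = e_{11}(ar)$ and $e_{11}(r)e_{11}(a) = e_{11}(ra)$ lie in $\mathcal I$, whence $ar, ra \in I$. I would then prove $\mathcal I = M_\Lambda(I)$ by two inclusions. For $M_\Lambda(I) \subseteq \mathcal I$ it suffices to treat a single $e_{ij}(r)$ with $r \in I$; since $e_{11}(r) \in \mathcal I$, the shuttling identity puts $e_{ij}(r)$ into $\mathcal I$. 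For $\mathcal I \subseteq M_\Lambda(I)$ I would take $A = (a_{ij}) \in \mathcal I$ and isolate a single entry via the computation $e_{1i}(\epsilon)\,A\,e_{j1}(\epsilon) = e_{11}(a_{ij})$, choosing $\epsilon$ a local unit for $a_{ij}$; this shows $e_{11}(a_{ij}) \in \mathcal I$, i.e. $a_{ij} \in I$, so $A \in M_\Lambda(I)$.

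Having established that every ideal has the form $M_\Lambda(I)$, I would finish $(1)$ by noting that $M_\Lambda(I)$ is visibly an ideal of $M_\Lambda(R)$ whenever $I$ is an ideal of $R$ (entrywise, since $I$ is a two-sided ideal), so the map $I \mapsto M_\Lambda(I)$ is well defined. Surjectivity is exactly the previous paragraph. For injectivity and order reflection I would use the observation that $r \in I \iff e_{11}(r) \in M_\Lambda(I)$, so $M_\Lambda(I) \subseteq M_\Lambda(J)$ forces $I \subseteq J$, and in particular $M_\Lambda(I) = M_\Lambda(J)$ forces $I = J$. Since the map and its inverse are both inclusion-preserving bijections between the two ideal lattices, the map is a lattice isomorphism. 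For part $(2)$, I would again argue by two inclusions, recalling that $IJ$ consists of finite sums $\sum_k a_k b_k$ with $a_k \in I$ and $b_k \in J$. For $M_\Lambda(IJ) \subseteq M_\Lambda(I)M_\Lambda(J)$ it suffices to exhibit $e_{ij}(c)$ as a product-sum for $c = \sum_k a_k b_k \in IJ$, and indeed $e_{ij}(c) = \sum_k e_{i1}(a_k)\,e_{1j}(b_k)$ with $e_{i1}(a_k) \in M_\Lambda(I)$ and $e_{1j}(b_k) \in M_\Lambda(J)$. For the reverse inclusion, any product $AB$ with $A \in M_\Lambda(I)$, $B \in M_\Lambda(J)$ has $(i,l)$-entry $\sum_m a_{im}b_{ml} \in IJ$, so $AB \in M_\Lambda(IJ)$, and closure of $M_\Lambda(IJ)$ under addition finishes it.

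The only genuine obstacle is the absence of a global identity in $R$: every step where one would ordinarily multiply by $1$ to extract or rebuild a single matrix entry must instead be executed with a local idempotent selected for the finitely many relevant elements. Once the shuttling identity $e_{ij}(r) = e_{i1}(\epsilon)e_{11}(r)e_{1j}(\epsilon)$ and the entry-extraction formula $e_{1i}(\epsilon)A\,e_{j1}(\epsilon) = e_{11}(a_{ij})$ are in hand, the remainder is routine matrix-unit bookkeeping.
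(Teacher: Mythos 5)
Your proof is correct and complete: the shuttling identity, the entry-extraction computation $e_{1i}(\epsilon)\,A\,e_{j1}(\epsilon)=e_{11}(\epsilon a_{ij}\epsilon)=e_{11}(a_{ij})$, the order-reflection argument for the lattice isomorphism, and the two inclusions in part (2) are all valid, with local units invoked exactly where they are needed (the only cosmetic point is that ``$1$'' must be read as a fixed chosen element of the arbitrary nonempty index set $\Lambda$). The paper itself gives no proof of this proposition---it simply cites \cite[Proposition 1]{EKKRR}---and your matrix-unit bookkeeping is the standard argument for that cited result, so there is no divergence of approach to report.
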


\section{Multiplicative Conditions on Ideals}\label{MultCondSection} 

In this brief section we recall some multiplicative conditions on ideals, which will be useful for the remainder of the paper, and then discuss them in the context of Leavitt path algebras. These conditions have been studied extensively in the literature, especially in the commutative setting.

\begin{definition} Let $R$ be a ring with local units, and let $I$ be a proper ideal of $R$.   

\begin{enumerate}
\item[$(1)$] The \emph{radical} of  $I$, denoted $\mathrm{rad}(I)$ (or $\sqrt{I}$ elsewhere in the literature), is the intersection of all the prime ideals of $R$ containing $I$.  (It is not hard to show that every proper ideal in a ring with local units is contained in a maximal ideal, and hence in a prime ideal, so that this set of ideals is necessarily nonempty.)  

\item[$(2)$] $I$ is \emph{primary} in case for all ideals $A$ and $B$ of $R$, $B\subseteq \mathrm{rad}(I)$ whenever $AB \subseteq I$ and $A\not\subseteq I$ .  

\item[$(3)$] $I$ is \emph{quasi-primary} in case $\mathrm{rad}(I)$ is prime.

\item[$(4)$] $I$ is \emph{irreducible} in case for all ideals $A$ and $B$ of $R$, $I = A\cap B$ implies that $I = A$ or $I=B$.   

\item[$(5)$] $I$ is a \emph{prime power} in case $I = P^n$ for some prime ideal $P$ of $R$ and some positive integer $n$.   
\end{enumerate}
\end{definition}

Note that a prime ideal $P$ in any ring is irreducible, since $I\cap J=P$ implies that $IJ\subseteq P$, which gives  $I\subseteq P$ or $J\subseteq P$, and hence $I=P$ or $J=P$. It is easy to see that any prime ideal also satisfies the conditions ``primary", ``quasi-primary", and ``prime power".

While conditions (2)--(5) in the above definition are generally not interchangeable, they happen to coincide in Leavitt path algebras.

\begin{proposition}\label{quasi-primaryequivalent}   
Let $L=L_K(E)$ be a Leavitt path algebra, let $I$ be a proper ideal of $L$, and write $\mathrm{gr}(I) = I(H,S)$. Then the following are equivalent.
\begin{enumerate}
\item[$(1)$] $I$ is primary.

\item[$(2)$] $I$ is quasi-primary.

\item[$(3)$] $I$ is irreducible.

\item[$(4)$] $I$ is a prime power.  

\item[$(5)$] Either    
\begin{enumerate}
\item[$(5.1)$] $I$ is a graded prime ideal, in which case $(E\setminus (H,S))^{0}$ is downward directed, or   
\item[$(5.2)$] $I$ is a power of a non-graded prime ideal, in which case $(E\setminus (H,S))^{0}$ is downward directed; and $I = I(H,B_{H}) + \langle p^n(c) \rangle$ for a $($unique$)$ cycle $c$ without exits in $E\setminus (H,B_{H})$, an irreducible polynomial $p(x) \in K[x,x^{-1}]$, and a positive integer $n$.
\end{enumerate}
\end{enumerate}
\end{proposition}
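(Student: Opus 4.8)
The plan is to make quasi-primariness (condition~(2)) the hub: first I will compute $\mathrm{rad}(I)$ explicitly, then read off from the prime classification exactly when it is prime, obtaining $(2)\Leftrightarrow(5)$; afterwards I will establish $(5)\Rightarrow(1),(3),(4)$ and close the loop with the softer implications $(4)\Rightarrow(2)$, $(1)\Rightarrow(2)$, $(3)\Rightarrow(2)$. The crucial first step is the radical computation. Writing $I=I(H,S)+\sum_{i\in Y}\langle f_i(c_i)\rangle$ as in Theorem~\ref{arbitrary ideals}(1), I claim that
\[
\mathrm{rad}(I)=I(H,S)+\sum_{i\in Y}\langle\overline{f_i}(c_i)\rangle,
\]
where $\overline{f_i}$ is the square-free part of $f_i$. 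To prove this I use that $\mathrm{rad}(I)$ is the smallest semiprime ideal containing $I$: the displayed ideal is semiprime by Theorem~\ref{Radical ideals}, it contains $I$ because $\overline{f_i}\mid f_i$ forces $\langle f_i(c_i)\rangle\subseteq\langle\overline{f_i}(c_i)\rangle$ via Lemma~\ref{Product}(1), and it is smallest because any semiprime $N\supseteq I$ satisfies $\langle\overline{f_i}(c_i)\rangle^{k}=\langle\overline{f_i}^{\,k}(c_i)\rangle\subseteq\langle f_i(c_i)\rangle\subseteq N$ for large $k$, whence $\langle\overline{f_i}(c_i)\rangle\subseteq N$. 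Since each $\overline{f_i}$ is nonconstant with nonzero constant term, the cycle terms contribute no vertices, so $\mathrm{rad}(I)\cap E^0=H$, hence $\mathrm{gr}(\mathrm{rad}(I))=I(H,S)$, and the occurring cycles are exactly the $c_i$ by Remark~\ref{Uniqueness of cycles}.

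With this in hand I apply Theorem~\ref{specificprime} to the semiprime ideal $\mathrm{rad}(I)$. Because $\mathrm{gr}(\mathrm{rad}(I))=I(H,S)$, matching against the three prime forms forces either $Y=\emptyset$ (so $I=I(H,S)=\mathrm{rad}(I)$ is graded and prime, which is (5.1), with $(E\setminus(H,S))^0$ downward directed since this is exactly primeness of a graded ideal), or $|Y|=1$ with $S=B_H$ and the single square-free polynomial $\overline{f_1}=p$ irreducible and $E^0\setminus H=M(s(c_1))$; in the latter case $f_1=p^{n}$ for some $n\ge 1$, which is precisely (5.2). This gives $(2)\Leftrightarrow(5)$. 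For $(5)\Rightarrow(4)$: case (5.1) is already prime; in case (5.2), put $P=I(H,B_H)+\langle p(c)\rangle$, which is prime by Theorem~\ref{specificprime}(3) since $E^0\setminus H=M(s(c))$ is downward directed, and working modulo the graded ideal $I(H,B_H)$ (which equals all its powers by Lemma~\ref{Product=Intersection}) Lemma~\ref{Product}(1) yields $P^{n}=I(H,B_H)+\langle p(c)\rangle^{n}=I(H,B_H)+\langle p^{n}(c)\rangle=I$. The implication $(4)\Rightarrow(2)$ is general: if $I=Q^{n}$ with $Q$ prime, the primes containing $I$ are exactly those containing $Q$, so $\mathrm{rad}(I)=Q$ is prime.

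The positive heart is $(5)\Rightarrow(1)$ and $(5)\Rightarrow(3)$. Case (5.1) is prime, hence primary and irreducible. In case (5.2) I reduce modulo $G:=I(H,B_H)$: since $G\subseteq I$, passing to images in $\bar L=L/G\cong L_K(E\setminus(H,B_H))$ preserves the relevant containments, products, and intersections, so I may assume $G=0$, with $E^0$ downward directed, $I=\langle p^{n}(c)\rangle$, and $\mathrm{rad}(I)=P=\langle p(c)\rangle$. The key structural fact I will prove here is a dichotomy: downward-directedness makes $c$ the unique cycle without exits and forces $B_\emptyset=\emptyset$, so every ideal $J$ of $\bar L$ either contains $M=\langle\{c^0\}\rangle$ (when $J\cap E^0\ne\emptyset$, using that $c$ has no exits together with hereditariness) or has the form $\langle h(c)\rangle\subseteq M$ (when $J\cap E^0=\emptyset$). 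Via $M\cong M_\Lambda(K[x,x^{-1}])$ (Lemma~\ref{cycle-ideal-lemma}, Remark~\ref{cycle-ideal-remark}) and the correspondence of Proposition~\ref{Morita equivalence}, the ideals inside $M$ correspond to ideals of the PID $K[x,x^{-1}]$, with $I,P$ corresponding to $\langle p^{n}\rangle,\langle p\rangle$. Primariness and irreducibility then reduce to elementary divisibility (that $p^{n}\mid ab$ with $p^{n}\nmid a$ forces $p\mid b$, and that $\langle a\rangle\cap\langle b\rangle=\langle p^{n}\rangle$ forces $a$ or $b$ to equal $p^{n}$), the cases where an ideal contains $M$ being handled directly since $MJ=J$ for $J\subseteq M$ while $MM=M\not\subseteq I$.

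It remains to close the cycle with $(1)\Rightarrow(2)$ and $(3)\Rightarrow(2)$, which I will prove contrapositively: assuming $\mathrm{rad}(I)$ is not prime, I will exhibit explicit witnesses showing $I$ is neither primary nor irreducible. Using the explicit form of $\mathrm{rad}(I)$, non-primeness falls into a short list of configurations---some $\overline{f_i}$ has two distinct irreducible factors, or $|Y|\ge 2$, or $I$ is graded with $(E\setminus(H,S))^0$ not downward directed---and in each I will produce a nontrivial intersection decomposition (splitting $\langle f_i(c_i)\rangle$ along coprime factors of $f_i$ through the PID picture, splitting across orthogonal cycles using Lemma~\ref{Product}(2), or splitting a non-prime graded ideal through the admissible-pair lattice) together with a pair of ideals violating the primary condition. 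I expect this final case analysis to be the main obstacle: once the dichotomy of the previous paragraph is in place the forward direction is clean, but verifying that \emph{every} failure of primeness of $\mathrm{rad}(I)$ forces simultaneous failure of both primariness and irreducibility---especially the purely graded case, where the witnesses must be constructed from the combinatorics of non-downward-directed hereditary saturated sets rather than from $K[x,x^{-1}]$---requires treating each configuration separately.
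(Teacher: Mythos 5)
Your plan takes a genuinely different route from the paper's. The paper imports the equivalences $(1) \Leftrightarrow (3) \Leftrightarrow (4)$ wholesale from \cite[Theorem 5.7]{R2}, cites \cite[Lemma 5.4]{R2} for $\mathrm{gr}(\mathrm{rad}(I)) = \mathrm{gr}(I)$, and only argues $(2) \Rightarrow (5) \Rightarrow (4) \Rightarrow (2)$; you instead re-derive everything from the paper's stated preliminaries. Most of your machinery checks out: the formula $\mathrm{rad}(I) = I(H,S) + \sum_{i \in Y}\langle \overline{f_i}(c_i)\rangle$ is correct and your justification (semiprimeness via Theorem~\ref{Radical ideals}, minimality via $\langle \overline{f_i}(c_i)\rangle^k \subseteq \langle f_i(c_i)\rangle$) is sound; the dichotomy in the downward-directed quotient---every ideal either contains $M = \langle\{c^0\}\rangle$ or equals $\langle h(c)\rangle \subseteq M$---is correct (hereditariness plus ``no exits'' forces any vertex-containing ideal to absorb $\{c^0\}$, and $B_\emptyset = \emptyset$ kills the graded part otherwise), and the Morita transfer to $K[x,x^{-1}]$ then gives $(5) \Rightarrow (1)$ and $(5) \Rightarrow (3)$ directly. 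This buys self-containedness at the price of having to prove $(1) \Rightarrow (2)$ and $(3) \Rightarrow (2)$ yourself, which the paper gets for free from the citation---and that is where the gap lies.

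Your list of configurations in which $\mathrm{rad}(I)$ fails to be prime is not exhaustive. For $Y \neq \emptyset$, Theorem~\ref{specificprime}(3) together with Remark~\ref{Uniqueness of cycles} says $\mathrm{rad}(I)$ is prime if and only if $|Y| = 1$, $\overline{f_1}$ is irreducible, $S = B_H$, \emph{and} $E^0 \setminus H = M(s(c_1))$. You list only ``some $\overline{f_i}$ reducible,'' ``$|Y| \geq 2$,'' and ``$I$ graded with $(E\setminus(H,S))^0$ not downward directed,'' omitting the non-graded ideals whose cycle datum is prime-like but whose ambient graph data fails. Concretely: let $E$ consist of a loop $c$ at $v$ together with an isolated vertex $w$, and let $I = \langle p(c)\rangle$ with $p$ irreducible. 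Then $I$ is non-graded, $|Y| = 1$, and $\overline{f_1} = p$ is irreducible, so none of your three configurations applies; yet $\mathrm{rad}(I) = I$ is not prime (since $E^0 \neq M(v)$), and indeed $I$ is neither primary (take $A = \langle w\rangle$, $B = \langle v\rangle$: then $AB = 0 \subseteq I$, $A \not\subseteq I$, $B \not\subseteq \mathrm{rad}(I)$) nor irreducible (one checks $I = (\langle w\rangle + I) \cap \langle v\rangle$ with both factors strictly larger than $I$). As written, your plan therefore leaves $(1) \Rightarrow (2)$ and $(3) \Rightarrow (2)$ unproven in this configuration, so the equivalence does not close. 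The repair is to use the exhaustive trichotomy: (a) some $\overline{f_i}$ reducible, (b) $|Y| \geq 2$, (c) $(E\setminus(H,S))^0$ not downward directed, where (c) must be allowed for non-graded $I$ as well---for $Y \neq \emptyset$, condition (c) is equivalent to ``$S \neq B_H$ or $E^0\setminus H \neq M(s(c_1))$,'' because a cycle without exits in a downward-directed quotient graph forces $B_H \setminus S = \emptyset$ and forces every vertex to reach the cycle. With that corrected list, the witness constructions you sketch (coprime splitting in the PID picture, orthogonal cycles via Lemma~\ref{Product}(2), and graph-theoretic witnesses for (c)) do cover all cases.
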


\begin{proof}
The equivalences (1) $\Leftrightarrow$ (3) $\Leftrightarrow$ (4)  are established in      \cite[Theorem 5.7]{R2}. We shall show that (2) $\Rightarrow$ (5) $\Rightarrow$ (4) $\Rightarrow$ (2).

(2) $\Rightarrow$ (5) Suppose that $I$ is quasi-primary, so that $\mathrm{rad}(I)=P$ is a prime ideal. Suppose further that $I$ is graded. Then $I = I(H,S)$ is semiprime, by Theorem \ref{Radical ideals}, and hence $I=\mathrm{rad}(I)$ is itself prime. As mentioned in Section \ref{LPAidealSect}, $L_K(E)/I \cong L_K(E\setminus (H,S))$. Since the zero ideal in $L_K(E\setminus (H,S))$ is prime, it follows from Theorem \ref{specificprime} that $(E\setminus (H,S))^{0}$ is downward directed, establishing  (5.1) in case $I$ is graded.

Let us therefore assume that $I$ is not graded, and, using Theorem \ref{arbitrary ideals}(1), write $I=I(H,S)+ \sum_{i\in Y} \langle f_{i}(c_{i})\rangle$, where each $c_{i}$ is a cycle
without exits in $E\backslash(H,S)$, and each $f_{i}(x)\in K[x]$ has a nonzero constant term. According to \cite[Lemma 5.4]{R2}, $\mathrm{gr}(I)=\mathrm{gr}(\mathrm{rad}(I))=\mathrm{gr}(P)$, and so $\mathrm{gr}(I)=I(H,S)$ is a prime ideal, by Theorem \ref{specificprime}. Since necessarily $\mathrm{gr}(P) \neq P$, by the same theorem, 
\[
P = I(H,S)+\langle p(c)\rangle = I(H,B_{H})+\langle p(c)\rangle,
\] 
where $c$ is a cycle without (K) in $E$, $E^0\setminus H = M(s(u))$, and $p(x)\in K[x,x^{-1}]$ is irreducible (which can be taken to be in an irreducible polynomial in $K[x]$). In particular, $E^{0}\setminus H = (E\backslash(H,B_{H}))^{0}$ is downward directed, and hence there can be only one cycle without exits in $E\backslash(H,B_{H})$. So $I = I(H,B_{H})+\langle f(d)\rangle$ for such a cycle $d$ and some $f(x)\in K[x]$ with a nonzero constant term. Since $E^0\setminus H = M(s(c))$, $s(c)$ must lie on $d$, and since $c$ is a cycle without (K), it follows that $c=d$.

We claim that $f(x)=ap^{n} (x)$ for some positive integer $n$ and $a \in K$. Suppose, on the contrary, that there is an irreducible polynomial that is not conjugate to $p(x)$ (i.e., $q(x) \in K[x] \setminus Kp(x)$), necessarily with a nonzero constant term, which is a divisor of $f(x)$. Then, again by Theorem \ref{specificprime}, $Q=I(H,B_H)+\langle q(c)\rangle$ is a prime ideal containing $I$. But then $Q\supseteq \mathrm{rad}(I)=P$, which implies that $p(c) \in Q$. Since $p(x)$ and $q(x)$ are not $K$-scalar multiples of each other, and since $K[x]$ is a Euclidean domain, $1 = p(x)a(x)+q(x)b(x)$ for some $a(x), b(x) \in K[x]$. Since $p(c)\in Q$ and $q(c) \in Q$, this implies that $c^0=s(c)$ is in $Q \cap E^{0} = H$, by Theorem \ref{arbitrary ideals}(3), contrary to the choice of $c$. Thus $p(x)$ is the only possible irreducible divisor of $f(x)$, up to multiplying by a constant, and so $f(x)=ap^{n}(x)$ for some $n$ and $a \in K$. It follows that $I=I(H,B_H)+\langle p^{n}(c)\rangle$. Finally, by Theorem \ref{arbitrary ideals}(2) and Lemma \ref{Product}(1), we have $P^{n} = I(H,B_H)+\langle p^{n}(c)\rangle$, and so $I = P^{n}$.

(5) $\Rightarrow$ (4) This is a tautology.

(4) $\Rightarrow$ (2) If $I=P^{n}$, for some prime ideal $P$ and positive integer $n$, then every prime
ideal $Q$ containing $I$ contains $P$. So ${\rm rad}(I)=P$, and thus  $I$
is quasi-primary.
\end{proof}

While any prime ideal satisfies the properties given in Proposition \ref{quasi-primaryequivalent}, the next two examples show that these properties are independent of the ideal $I$ being semiprime.     

\begin{example}\label{primenotprimaryremark}
Let $E$ be the graph having one vertex $v$ and one loop $e$, with $s(e)=v=r(e)$, pictured below. 
\smallskip
\[
\xymatrix{{\bullet}^{v} \ar@(ur,dr)^e}
\]
\smallskip

\noindent Then $K[x,x^{-1}]\cong L_{K}(E)$, via the map induced by sending $1\mapsto v$, $x\mapsto e$, and $x^{-1}\mapsto e^*$. In this ring the ideal $I = \langle (x-1)^2 \rangle$ is a prime power, but clearly not semiprime. \hfill $\Box$
\end{example}

\begin{example}
Letting $E$ be a graph having two vertices and no edges, we have $K\oplus K \cong L_{K}(E)$. The zero ideal in this ring is easily seen to be semiprime but not a prime power. \hfill $\Box$
\end{example}


\section{Prime Ideals}\label{primesection}

In this section we examine prime ideals and products of prime ideals in Leavitt path algebras, as well as products of ideals of the sort described in Proposition \ref{quasi-primaryequivalent}. We remind the reader that a description of the prime ideals in these rings can be found in Theorem \ref{specificprime}.  As is standard, an ideal being a   ``product of prime ideals" includes the possibility that the ideal itself is prime.  

We begin with a characterization of those Leavitt path algebras for which every proper ideal is prime, which builds on an earlier description \cite[Proposition 3.6]{AGMR} of Leavitt path algebras where every semiprime ideal is prime and extends \cite[Proposition 2.7]{EKR}

\begin{theorem} \label{Everyone prime} 
The following are equivalent for any Leavitt path algebra $L=L_K(E)$.
\begin{enumerate}
\item[$(1)$] Every proper ideal of $L$ is prime.

\item[$(2)$] Every proper ideal of $L$ is a prime power.

\item[$(3)$] Every ideal of $L$ is graded, and the ideals of $L$ form a chain under set inclusion.

\item[$(4)$] The graph $E$ satisfies Condition (K), and the admissible pairs $(H,S)$ form a chain under the partial order of the admissible pairs.
\end{enumerate}
\end{theorem}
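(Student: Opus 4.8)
The plan is to prove the cycle $(1)\Rightarrow(2)\Rightarrow(3)\Rightarrow(1)$ together with the equivalence $(3)\Leftrightarrow(4)$, reserving the bulk of the effort for $(2)\Rightarrow(3)$. The implication $(1)\Rightarrow(2)$ is immediate, since every prime ideal $P$ equals $P^1$ and is therefore a prime power. For $(3)\Rightarrow(1)$ I would argue as follows: given a proper ideal $P$ and ideals $I,J$ with $IJ\subseteq P$, the chain hypothesis makes $I,J$ comparable, say $I\subseteq J$; since every ideal is graded, Lemma \ref{Product=Intersection}(1) gives $IJ=I\cap J=I$, whence $I\subseteq P$, so $P$ is prime. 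For $(3)\Leftrightarrow(4)$ I would invoke Theorem \ref{LPAbasics}(1) to identify ``every ideal is graded'' with ``$E$ satisfies Condition (K)''; once all ideals are graded, the order-preserving bijection between admissible pairs and graded ideals recorded in Section \ref{LPAidealSect} translates the chain condition on ideals directly into the chain condition on admissible pairs.

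The heart of the argument is $(2)\Rightarrow(3)$, which I would split into showing that the ideals form a chain and that every ideal is graded. For the chain condition, suppose two ideals $I_1,I_2$ were incomparable; then both are proper and $I:=I_1\cap I_2$ is a proper ideal strictly smaller than each. By hypothesis $I$ is a prime power, hence irreducible by Proposition \ref{quasi-primaryequivalent}, so $I=I_1\cap I_2$ forces $I=I_1$ or $I=I_2$, a contradiction. Hence the ideals form a chain.

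For the graded condition I would argue by contradiction using Theorem \ref{LPAbasics}(1): if $E$ fails Condition (K), there is a non-graded proper ideal $I$, which by $(2)$ is a non-graded prime power, so Proposition \ref{quasi-primaryequivalent}(5.2) presents it as $I=I(H,B_H)+\langle p^n(c)\rangle$ with $c$ a cycle without exits in $E\setminus(H,B_H)$ and $p$ irreducible. Choosing a second irreducible polynomial $q\in K[x]$ with nonzero constant term that is not an associate of $p$ (such $q$ exists, as $K[x]$ has infinitely many monic irreducibles, only one of which is $x$), I would form $J=I(H,B_H)+\langle (pq)(c)\rangle$. This is a proper, non-graded ideal whose graded part is $I(H,B_H)$ and whose associated polynomial $pq$ has two distinct irreducible factors. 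Were $J$ a prime power, Proposition \ref{quasi-primaryequivalent}(5.2) together with the uniqueness of the graded part and of the cycle (Remark \ref{Uniqueness of cycles}) and the Morita correspondence of Lemma \ref{cycle-ideal-lemma} and Proposition \ref{Morita equivalence} would force $pq$ to be an associate of a single prime power $\tilde{p}^{\,m}$ in $K[x,x^{-1}]$, which is impossible. Thus $J$ is a proper ideal that is not a prime power, contradicting $(2)$; hence $E$ satisfies Condition (K) and every ideal is graded.

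The main obstacle I anticipate is this graded step: one must manufacture, from the mere failure of Condition (K), an honest proper ideal of $L$ that provably is not a prime power. The cleanest route is to pass to the quotient $L/I(H,B_H)\cong L_K(E\setminus(H,B_H))$, where $c$ becomes a cycle without exits, use Lemma \ref{cycle-ideal-lemma} to reduce questions about $\langle (pq)(c)\rangle$ to ideals of $K[x,x^{-1}]$, and then appeal to the uniqueness of cycles (Remark \ref{Uniqueness of cycles}) to conclude that the polynomial part of any non-graded prime power must itself be a prime power. Some care is needed to verify that $J$ is genuinely proper and non-graded and that the uniqueness statements apply; by contrast, the chain argument via irreducibility and the remaining implications are short.
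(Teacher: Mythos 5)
Your proposal is correct, and the heart of it---the implication $(2)\Rightarrow(3)$---rests on the same device as the paper's proof: attached to a non-graded ideal one finds a cycle $c$ without exits in a quotient graph, takes two non-conjugate irreducible polynomials $p,q$ with nonzero constant terms, and shows that $I(H,B_H)+\langle (pq)(c)\rangle$ is a proper ideal which cannot be a prime power, by Proposition \ref{quasi-primaryequivalent}(5) together with the uniqueness of the graded part, the cycle (Remark \ref{Uniqueness of cycles}), and the polynomial ideal (via Lemma \ref{cycle-ideal-lemma} and Proposition \ref{Morita equivalence}); your level of detail on properness and non-gradedness of this ideal matches the paper's. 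You deviate from the paper in two genuine, if peripheral, ways. First, the paper obtains $(1)\Leftrightarrow(4)$ by citing the equivalences of \cite[Proposition 3.6]{AGMR}, whereas you prove $(3)\Leftrightarrow(4)$ directly from Theorem \ref{LPAbasics}(1) plus the correspondence $(H,S)\mapsto I(H,S)$; this makes the proof self-contained, and it is legitimate because that correspondence is an order isomorphism (inclusion of graded ideals forces inclusion of the admissible pairs, using, e.g., Theorem \ref{arbitrary ideals}(3)), which is the content of the result of \cite{AAS} quoted in Section \ref{LPAidealSect}. Second, for the chain half of $(3)$ the paper first establishes gradedness, concludes from Proposition \ref{quasi-primaryequivalent} that every proper ideal is then prime, and derives the chain from primeness of $I\cap J$; you instead get the chain immediately from irreducibility of prime powers (the equivalence $(3)\Leftrightarrow(4)$ of Proposition \ref{quasi-primaryequivalent}), which has the small structural advantage of not depending on the gradedness step. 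Both routes prove the theorem; yours trades the external citation for a direct argument, while the paper's ordering of steps makes its chain argument slightly shorter.
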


\begin{proof}
Suppose that (1) holds.  Then, in particular, every semiprime ideal is prime, and hence, by the equivalence of (1) and (4) in \cite[Proposition 3.6]{AGMR}, (4) holds.  Conversely, if (4) holds, then, by the equivalence of (3) and (4) in \cite[Proposition 3.6]{AGMR}, every (proper) ideal is prime, proving (1). So it suffices to show that (1)--(3) are equivalent.

(1) $\Rightarrow$ (2) This is a tautology.

(2) $\Rightarrow$ (3) Seeking a contradiction, suppose that (2) holds and there is a non-graded ideal $I$ in $L$. By Theorem \ref{arbitrary ideals}(1), $I=I(H,S)+ \sum_{i\in Y} \langle f_{i}(c_{i})\rangle$, where each $c_{i}$ is a cycle without exits in $E\backslash(H,S)$, and each $f_{i}(x)\in K[x]$ has a nonzero constant term. Fix $i\in Y$, and let $p(x), q(x) \in K[x,x^{-1}]$ be non-conjugate  irreducible polynomials. Then, using Lemma \ref{Product}(1) and Theorem \ref{arbitrary ideals}(2),
\[
I(H,S)+\langle p(c_{i})q(c_{i})\rangle=(I(H,S)+\langle p(c_{i})\rangle)(I(H,S)+\langle q(c_{i})\rangle).
\]
This is a proper ideal, which, by Proposition, \ref{quasi-primaryequivalent} is not a prime power, contrary to (2). Hence every ideal of $L$ is graded. Moreover, by the same proposition, it must be the case that every ideal of $L$ is prime.

Now suppose that there are ideals $I$ and $J$ in $L$ such that $I\cap J\neq I$ and $I \cap J \neq J$. Then $I \cap J$ is a proper ideal, $IJ \subseteq I\cap J$, but $I \nsubseteq I\cap J$ and $J\nsubseteq I\cap J$, which contradicts $I \cap J$ being prime. Thus for all ideals $I$ and $J$ of $L$, either $I \cap J = I$ or $I \cap J = J$. It follows that the ideals of $L$ form a chain under set inclusion.

(3) $\Rightarrow$ (1) Suppose that (3) holds, and let $A, B, I$ be ideals of $L$ such that $AB \subseteq I$. Since the ideals form a chain, we may assume, without loss of generality, that $A \subseteq B$. Since $B$ is graded, $AB = A$, by Lemma \ref{Product=Intersection}(1), and hence $A \subseteq I$. Thus, every proper ideal of $L$ is prime.
\end{proof}

Of course, by Proposition \ref{quasi-primaryequivalent}, we could replace 	``prime power" in condition (2) above with any of ``primary", ``quasi-primary", or ``irreducible", while preserving equivalence.

There are obvious examples of Leavitt path algebras which satisfy the conditions of Theorem \ref{Everyone prime}, e.g. $K \cong L_K(\bullet)$.   We offer now an example in which the graph-theoretic condition (4) in Theorem \ref{Everyone prime} becomes the germane one to analyze.  

\begin{example} \label{chain of ideals}
Let $E$ be the following graph.

\smallskip
\[ 
\xymatrix{ \cdots  \ar[r] &  \bullet^{v_{i-1}} \ar@(ul,ur) \ar@(dr,dl) \ar[r]  &  \bullet^{v_{i}} \ar@(ul,ur) \ar@(dr,dl) \ar[r]  &  \bullet^{v_{i+1}} \ar@(ul,ur) \ar@(dr,dl) \ar[r] & \cdots }
\]
\smallskip
 
\noindent
Then clearly $E$ satisfies Condition (K), and is row-finite. In particular, $B_H$ is empty for every hereditary saturated $H \subseteq E^0$. Moreover, the nonempty proper hereditary saturated subsets of $E^0$ are precisely those of the form $\{v_{i} : i \geq n\}$, where $n \in \mathbb{Z}$, and hence form a chain under set inclusion. It follows that the admissible pairs $(H,S)$ form a chain under the partial order of the admissible pairs, described in Section \ref{LPAidealSect}, and so $E$ satisfies condition (4) in Theorem \ref{Everyone prime}. Thus $L_K(E)$ satisfies conditions (1)--(3) in the theorem. \hfill $\Box$
\end{example}

Our goal for the remainder of this section is to establish results analogous to Theorems \ref{specificprime} and \ref{Everyone prime}, with ``prime" replaced by ``product of primes", where by a ``product" of ideals we shall always mean a finite product.  To give  an analogue of Theorem \ref{specificprime} we require a lemma.  

Recall that given a collection $\{S_i : i \in X\}$ of sets, the intersection $\bigcap_{i\in X} S_i$ is \emph{irredundant} if $\bigcap_{i\in X \setminus \{j\}} S_i \not\subseteq S_j$ for all $j \in X$. Similarly, the union $\bigcup_{i\in X} S_i$ is \emph{irredundant} if $S_j \not\subseteq \bigcup_{i\in X \setminus \{j\}} S_i$ for all $j \in X$.   When the indexing set is finite, it is easy to show that any intersection (respectively, union) may be replaced by an indexing subset for which the intersection (respectively, union) is irredundant.  

\begin{lemma}\label{gradedprodlemma}
The following are equivalent for any Leavitt path algebra $L=L_K(E)$ and positive integer $n$.
\begin{enumerate}
\item[$(1)$] The zero ideal is the (irredundant) intersection of $n$ prime ideals.

\item[$(2)$] The zero ideal is the (irredundant) intersection of $n$ graded prime ideals.

\item[$(3)$] $E^{0}$ is the (irredundant) union of $n$ maximal tails.
\end{enumerate}
Moreover, the maximal tails in (3) can be taken to be the complements in $E^0$ of the sets of vertices contained in the the prime ideals in (1) or (2).
\end{lemma}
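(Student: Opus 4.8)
The plan is to prove the cycle of implications $(1) \Rightarrow (3) \Rightarrow (2) \Rightarrow (1)$, which automatically handles both the equivalence and the final ``Moreover'' assertion, since the correspondence between prime ideals and maximal tails will be made explicit along the way. The fundamental dictionary I would lean on is Theorem~\ref{specificprime} together with the hereditary-saturated/maximal-tail correspondence recorded at the end of Section~\ref{graphs-section}: if $P$ is a prime ideal with $H = P \cap E^0$, then $E^0 \setminus H$ is a maximal tail. Indeed, each of the three cases of Theorem~\ref{specificprime} forces $E^0\setminus H$ to be downward directed (either outright in case (1), or via $E^0\setminus H = M(u)$, $M(s(c))$ in cases (2),(3), and a set of the form $M(w)$ is always downward directed); and $H$ being hereditary saturated gives (MT1) and (MT2). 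Conversely, given any maximal tail $M$, the complement $H = E^0 \setminus M$ is hereditary saturated with $E^0\setminus H = M$ downward directed, so $P := I(H,B_H)$ is a \emph{graded} prime ideal by case (1) of Theorem~\ref{specificprime}, and $P \cap E^0 = H$ by Theorem~\ref{arbitrary ideals}(3). This last remark is what makes the ``Moreover'' clause work and is also the engine for $(3)\Rightarrow(2)$.

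For the implication $(1) \Rightarrow (3)$, I would start from prime ideals $P_1, \dots, P_n$ with $P_1 \cap \cdots \cap P_n = 0$, irredundantly, and set $H_j = P_j \cap E^0$, $M_j = E^0 \setminus H_j$. By the dictionary above each $M_j$ is a maximal tail. The key point is that $\bigcap_j P_j = 0$ forces $\bigcap_j H_j = 0$ at the level of vertices: indeed $\big(\bigcap_j P_j\big) \cap E^0 = \bigcap_j (P_j \cap E^0) = \bigcap_j H_j$, and the left side is $0 \cap E^0 = \emptyset$. Taking complements turns $\bigcap_j H_j = \emptyset$ into $\bigcup_j M_j = E^0$. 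The translation of irredundancy from the intersection of ideals to the union of maximal tails is the delicate bookkeeping step: I expect to argue that if $\bigcup_{j \ne k} M_j = E^0$ then $\bigcap_{j\ne k} H_j = \emptyset$, whence $\bigcap_{j \ne k} P_j$ would be an ideal meeting $E^0$ trivially; I would then need that such an ideal is $0$, contradicting irredundancy of the $P_j$. The cleanest route to ``meets $E^0$ trivially implies zero'' among these particular ideals is to pass to the graded prime replacements $I(H_j, B_{H_j})$, whose intersection is graded and hence determined by its vertex set via Theorem~\ref{arbitrary ideals}(3); I would route the irredundancy argument through those.

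The implication $(3) \Rightarrow (2)$ is then nearly immediate: given $E^0 = \bigcup_{j=1}^n M_j$ irredundantly, set $H_j = E^0 \setminus M_j$ and $P_j = I(H_j, B_{H_j})$. Each $P_j$ is a graded prime by the converse half of the dictionary. Their intersection is a graded ideal whose vertex set is $\bigcap_j H_j = E^0 \setminus \bigcup_j M_j = \emptyset$; since a graded ideal is generated by its vertices together with certain $v^H$, and an empty vertex set leaves at most breaking-vertex generators, I would verify (using that these $P_j$ are all of the form $I(H,B_H)$, so the full breaking-vertex set is included) that $\bigcap_j P_j = 0$. Irredundancy transfers back the same way it did going forward. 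Finally $(2) \Rightarrow (1)$ is a tautology, since graded prime ideals are in particular prime. The main obstacle, as flagged, is the careful handling of \emph{irredundancy} in both directions and the verification that a graded ideal with empty vertex set is zero; I would isolate the latter as the crucial lemma-within-the-proof, reducing to the fact that for $H = \emptyset$ with $S \subseteq B_\emptyset = \emptyset$, the admissible pair $(\emptyset,\emptyset)$ corresponds under the order-isomorphism of Section~\ref{LPAidealSect} to the zero ideal.
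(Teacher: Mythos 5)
Your dictionary between primes and maximal tails, and with it the non-irredundant version of all three equivalences, is correct and essentially matches the paper: $H_j = P_j\cap E^0$ is hereditary and saturated, Theorem~\ref{specificprime} gives downward directedness of $E^0\setminus H_j$ in each of its three cases, $\bigcap_j H_j=\emptyset$ follows from $\bigcap_j P_j=0$ via Theorem~\ref{arbitrary ideals}(3), and your observation that a graded ideal whose admissible pair is $(\emptyset,\emptyset)$ must be zero is exactly how the paper handles $(3)\Rightarrow(2)$; the ``Moreover'' clause does fall out of these constructions. The structural difference is that you run $(1)\Rightarrow(3)$ directly, with arbitrary (possibly non-graded) primes, whereas the paper's first move in $(1)\Rightarrow(2)$ is to replace each $P_j$ by its graded part.

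That difference is precisely where your proposal has a genuine gap: the irredundancy transfer in $(1)\Rightarrow(3)$. You need: if $\bigcap_{j\neq k}H_j=\emptyset$, then $\bigcap_{j\neq k}P_j=0$. Your stated route is that an ideal meeting $E^0$ trivially must be zero, but this is false for non-graded ideals: in $L_K(E)\cong K[x,x^{-1}]$ (one vertex, one loop) the prime $\langle v+e\rangle$, i.e.\ $\langle 1+x\rangle$, contains no vertex yet is nonzero, and in general an ideal with no vertices has the form $\sum_{i\in Y}\langle f_i(c_i)\rangle$ of Theorem~\ref{arbitrary ideals}(1), with nothing forcing $Y=\emptyset$. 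Your proposed repair --- routing through the ``replacements'' $I(H_j,B_{H_j})$ --- does not connect back to the $P_j$, because the containment goes the wrong way: for a non-graded prime (case (3) of Theorem~\ref{specificprime}) one has $I(H_j,B_{H_j})=\mathrm{gr}(P_j)\subsetneq P_j$, so knowing $\bigcap_{j\neq k}I(H_j,B_{H_j})=0$ places no upper bound on $\bigcap_{j\neq k}P_j$. The paper's way around this is to pass to graded parts \emph{before} any vertex counting: $\mathrm{gr}(P_j)$ is again prime by Theorem~\ref{specificprime}, and $\bigcap_j\mathrm{gr}(P_j)\subseteq\bigcap_j P_j=0$, after which all arguments (including the irredundancy transfers between (2) and (3)) take place among graded ideals, where ``no vertices implies zero'' is valid by Lemma~\ref{Product=Intersection}-style reasoning and Theorem~\ref{arbitrary ideals}(3). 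I note in fairness that even the paper leaves implicit whether irredundancy survives the replacement of $P_j$ by $\mathrm{gr}(P_j)$; closing that point honestly requires the structure theory of non-graded ideals and the uniqueness of the cycles $c_i$ (Remark~\ref{Uniqueness of cycles}). So your instinct that irredundancy is the delicate step was right, but the mechanism you propose cannot carry it, and the step as written would fail.
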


\begin{proof}
(1) $\Rightarrow$ (2) Suppose that $\{0\}$ is the intersection of $n$ prime ideals. Then, being graded, $\{0\}$ is the intersection of the graded parts of those prime ideals. Since the graded part of any prime ideal is prime, by Theorem \ref{specificprime}, $\{0\}$ is the intersection of $n$ graded prime ideals.

(2) $\Rightarrow$ (1) This is a tautology. 

(3) $\Rightarrow$ (2) Suppose that $E^0 = \bigcup_{i=1}^n M_i$ for some maximal tails $M_i \subseteq E^0$. Writing $H_i = E^0 \setminus M_i$, by Theorem \ref{specificprime}, we see that $P_i = I(H_i, B_{H_i})$ is a (graded) prime ideal of $L$, for each $i$. Now, $\emptyset = E^0 \setminus \bigcup_{i=1}^n M_i = \bigcap_{i=1}^n H_i$, and so, using Theorem \ref{arbitrary ideals}(3),
\[
\emptyset = \bigcap_{i=1}^n H_i = \bigcap_{i=1}^n (E^0 \cap P_i) = E^0 \cap \bigcap_{i=1}^n P_i.
\]
Since the intersection of a collection of graded ideals is graded, and since any nonzero graded ideal contains a vertex (since it must be generated by an admissible pair, as mentioned in Section \ref{LPAidealSect}), it follows that $\{0\} = \bigcap_{i=1}^n P_i$. Clearly, if the union $\bigcup_{i=1}^n M_i$ is irredundant, then so is the intersection $\bigcap_{i=1}^n H_i$, and hence so is $\bigcap_{i=1}^n P_i$.

(2) $\Rightarrow$ (3) Suppose that $\{0\} = \bigcap_{i=1}^n P_i$ for some graded prime ideals $P_i = I(H_i, S_i)$ of $L$. Then, using Theorem \ref{arbitrary ideals}(3),
$\emptyset = E^0 \cap \bigcap_{i=1}^n P_i = \bigcap_{i=1}^n H_i$. Hence $E^0 = E^0 \setminus \bigcap_{i=1}^n H_i = \bigcup_{i=1}^n (E^0 \setminus H_i)$, where each $E^0 \setminus H_i$ is a maximal tail, by Theorem \ref{specificprime}. We note that if the intersection $\bigcap_{i=1}^n P_i$ is irredundant, then so is $\bigcap_{i=1}^n H_i$ (since otherwise the intersection of $n-1$ of the $P_i$ would contain no vertices, and would therefore be zero), and hence so is the union $\bigcup_{i=1}^n (E^0 \setminus H_i)$.

The final claim follows from the constructions above.
\end{proof}

The next result extends \cite[Theorem 6.2]{R2}.
 
\begin{theorem}\label{productprime}  
Let $L=L_K(E)$ be a Leavitt path algebra, and let $I$ be a proper ideal of $L$. Then the following are equivalent.
\begin{enumerate}
\item[$(1)$] $I$ is a product of prime ideals.

\item[$(2)$] $I = I(H,S) + \sum_{i=1}^k \langle f_i(c_i)\rangle$, where each $c_i$ is a cycle without exits in $E \setminus (H,S)$; each $f_i(x) \in K[x]$ has a nonzero constant term; $(E \setminus (H,S))^{0}$ is the irredundant union of $n>0$ maximal tails; and $0 \leq k\leq n$, with $k=0$ indicating that $I=I(H,S)$.
\end{enumerate}
\end{theorem}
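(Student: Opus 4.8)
The plan is to pass to the quotient and reduce everything to the structure of $\bigoplus_i \langle f_i(c_i)\rangle$. Throughout I would write $F = E\setminus(H,S)$, so that $L/I(H,S)\cong L_K(F)$ and, by Theorem \ref{arbitrary ideals}(2), the image $\bar I$ of $I$ is $\bigoplus_{i} \langle f_i(c_i)\rangle\subseteq L_K(F)$, with the $c_i$ cycles without exits in $F$, uniquely determined by Remark \ref{Uniqueness of cycles}. Before treating either implication I would isolate two facts. First, a \emph{graded-part lemma}: for any prime ideals $P_1,\dots,P_m$ one has $\mathrm{gr}(P_1\cdots P_m)=\bigcap_j \mathrm{gr}(P_j)=\prod_j\mathrm{gr}(P_j)$. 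The inclusion $\prod_j\mathrm{gr}(P_j)\subseteq\mathrm{gr}(\prod_j P_j)$ holds because a finite product of graded ideals is graded (Lemma \ref{Product=Intersection}(1)) and is contained in $\prod_j P_j$; the reverse holds because $\mathrm{gr}(\prod P_j)$ is a graded ideal inside each $P_j$, hence inside each $\mathrm{gr}(P_j)$. Second, an \emph{essential-tail lemma}: if $c$ is a cycle without exits in $F$ and $M(s(c))\subseteq\bigcup_\alpha M_\alpha$ for maximal tails $M_\alpha$, then $M(s(c))=M_\alpha$ for some $\alpha$. Indeed $s(c)\in M_\alpha$ forces $M(s(c))\subseteq M_\alpha$ by (MT1), while (MT3) together with the absence of exits (any path out of $s(c)$ stays on $c$) forces the reverse inclusion. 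Since the graded part of a prime is prime (Theorem \ref{specificprime}), these two facts combine with Lemma \ref{gradedprodlemma} to control the graded skeleton.

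For $(1)\Rightarrow(2)$ I would write $I=P_1\cdots P_m$. The graded-part lemma gives $I(H,S)=\mathrm{gr}(I)=\bigcap_j\mathrm{gr}(P_j)$, an intersection of graded primes each containing $I(H,S)$; passing to $L_K(F)$, the zero ideal becomes an irredundant intersection of graded primes, so Lemma \ref{gradedprodlemma} yields that $F^0$ is the irredundant union of $n$ maximal tails $M_1,\dots,M_n$, with $n\ge 1$ since $I$ is proper. For the bound $k\le n$, I note that distinct cycles without exits have disjoint vertex sets (as in the proof of Lemma \ref{Product}(2)), so the tails $M(s(c_1)),\dots,M(s(c_k))$ are pairwise distinct; each lies in $F^0=\bigcup_j M_j$, so by the essential-tail lemma each equals some $M_j$, giving $k\le n$.

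For $(2)\Rightarrow(1)$, Lemma \ref{gradedprodlemma} produces graded primes $\bar P_j=I(F^0\setminus M_j,\, B_{F^0\setminus M_j})$ of $L_K(F)$ with $\bigcap_j\bar P_j=\prod_j\bar P_j=0$. By the essential-tail lemma I may reindex so that $M_i=M(s(c_i))$ for $i\le k$; then each $\bar P_i$ ($i\le k$) has exactly the shape required by Theorem \ref{specificprime}(3) to be upgraded. Factoring $f_i=a\prod_l p_{i,l}^{e_{i,l}}$ into irreducibles (none equal to $x$, so each $p_{i,l}$ has nonzero constant term), the ideals $Q_{i,l}=\bar P_i+\langle p_{i,l}(c_i)\rangle$ are non-graded primes, the cycle $c_i$ being without (K) because it is without exits. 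Using $\bar P_i\langle p(c_i)\rangle=\bar P_i\cap\langle p(c_i)\rangle=0$ (as $\bar P_i$ is graded and contains no vertex of $c_i$, cf.\ Lemma \ref{cycle-ideal-lemma}) together with Lemma \ref{Product}(1), I compute $\prod_l Q_{i,l}^{e_{i,l}}=\bar P_i+\langle f_i(c_i)\rangle=:R_i$, and set $R_i=\bar P_i$ for $i>k$. Expanding $\prod_{i=1}^n R_i$ and discarding every term with two or more cycle factors (zero by Lemma \ref{Product}(2)) and the all-graded term (zero since $\prod_j\bar P_j=0$) leaves $\sum_{i=1}^k\langle f_i(c_i)\rangle\prod_{i'\ne i}\bar P_{i'}$; here $\langle f_i(c_i)\rangle\subseteq\bar P_{i'}$ for $i'\ne i$ (irredundancy forces $\{c_i^0\}\cap M_{i'}=\emptyset$), so each summand collapses to $\langle f_i(c_i)\rangle$ by Lemma \ref{Product=Intersection}(1). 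Hence $\prod_i R_i=\bigoplus_i\langle f_i(c_i)\rangle=\bar I$ in $L_K(F)$. Lifting through $L\to L/I(H,S)$ turns this into $I=I(H,S)+\prod_i R_i'$ for products of primes $R_i'$ of $L$; finally $\prod_i R_i'\supseteq\prod_j P_j'=I(H,S)$ by monotonicity of products, where $P_j'$ lifts $\bar P_j$ and $\bigcap_j P_j'=I(H,S)$, so the graded summand is absorbed and $I=\prod_i R_i'$ is a product of primes.

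The main obstacle I expect is the $(2)\Rightarrow(1)$ direction. The product expansion must be organized so that the orthogonality of distinct cycles (Lemma \ref{Product}(2)) and the vanishing of the graded-times-cycle cross terms cleanly isolate each $\langle f_i(c_i)\rangle$, and, more delicately, the graded part $I(H,S)$ must be shown to be absorbed into the product after lifting — which rests on the monotonicity inclusion $\prod R_i'\supseteq\prod P_j'=I(H,S)$ and on carefully tracking the breaking-vertex data through the quotient correspondence. The essential-tail lemma is the conceptual crux tying the number of cycles to the number of maximal tails in both directions, and verifying its reverse inclusion via downward directedness is precisely where the ``without exits'' hypothesis does the real work.
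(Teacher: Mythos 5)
Your proposal is correct, but it takes a genuinely different --- and essentially self-contained --- route. The paper disposes of the graded case exactly as you do (Lemma \ref{Product=Intersection}(2) plus Lemma \ref{gradedprodlemma}), but for non-graded $I$ it simply cites \cite[Theorem 6.2]{R2}, which already asserts that $I$ is a product of primes if and only if $\mathrm{gr}(I)$ is an irredundant intersection of $n>0$ prime ideals and $I/\mathrm{gr}(I)=\bigoplus_{i=1}^{k}\langle f_i(c_i)\rangle$ with $k\leq n$; the paper's only remaining work is to translate that statement into graph language via Lemma \ref{gradedprodlemma}. You instead reprove the content of that citation from the paper's internal toolkit: your graded-part lemma (valid because $\mathrm{gr}(\cdot)$ picks out the largest graded ideal inside an ideal, a fact the paper also uses implicitly); your essential-tail lemma, which pins each maximal tail $M(s(c_i))$ to one of the $n$ given tails and thereby yields both the bound $k\leq n$ in $(1)\Rightarrow(2)$ and the reindexing $M_i=M(s(c_i))$ in $(2)\Rightarrow(1)$; the explicit construction of the non-graded primes $Q_{i,l}=\bar P_i+\langle p_{i,l}(c_i)\rangle$ via Theorem \ref{specificprime}(3) (legitimate since ``without exits'' implies ``without (K)'', and an irreducible of $K[x]$ with nonzero constant term stays irreducible in $K[x,x^{-1}]$); and the expansion of $\prod_i R_i$ in which cross terms die by Lemma \ref{Product}(2) and graded factors are absorbed by Lemma \ref{Product=Intersection}(1). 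The paper's route buys brevity and modularity; yours buys transparency, since it exhibits the prime factorization explicitly and explains exactly why the number of cycles is capped by the number of tails, in effect reproving the cited result of \cite{R2}.

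Two spots in your write-up deserve sharpening, though neither is a gap. First, in the lifting step the $R_i'$ must be taken to be products of the preimage primes $(Q_{i,l}')^{e_{i,l}}$ --- so that $\pi(R_i')=R_i$ because surjections preserve products of ideals --- rather than preimages of the $R_i$, which need not themselves be products of primes; your phrase ``products of primes $R_i'$ of $L$'' supports the correct reading, after which your absorption $\prod_i R_i'\supseteq\prod_j P_j'=\bigcap_j P_j'=I(H,S)$ (collapsing repeated graded factors via $(P_i')^{m}=P_i'$) finishes the argument. Second, the vanishing $\bar P_i\cap\langle\{c_i^0\}\rangle=\{0\}$ needs slightly more than ``$\bar P_i$ contains no vertex of $c_i$'': one should observe that the hereditary saturated closure of $\{c_i^0\}$ lies inside $M(s(c_i))=F^0\setminus H_i$, so this graded intersection contains no vertex at all and is therefore zero, since every nonzero graded ideal contains a vertex.
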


\begin{proof}
First, suppose that $I$ is graded. Then, by Lemma \ref{Product=Intersection}(2), $I$ is a product of prime ideals if and only if $I$ is the intersection of those same prime ideals. It follows that $I$ is a product of prime ideals of $L$ if and only if the zero ideal of $\bar{L} = L_K(E)/I$ is a finite intersection of prime ideals of $\bar{L}$. Now, as mentioned in Section \ref{LPAidealSect}, $\bar{L} = L_K(E)/I(H,S) \cong L_K(E\setminus (H,S))$.  Thus, by Lemma \ref{gradedprodlemma}, $I$ is a product of prime ideals if and only if $(E \setminus (H,S))^{0}$ is the union of finitely many maximal tails, which as mentioned previously may be assumed to be  irredundant. Thus (1) and (2) are equivalent in the case where $I$ is graded.

Now suppose that $I$ is not graded. Then, according to \cite[Theorem 6.2]{R2}, $I$ is a product of prime ideals if and only if $I(H,S)$ is the irredundant intersection of $n > 0$ prime ideals; and  $I/I(H,S) = \bigoplus_{i=1}^k \langle f_i(c_i)\rangle$, where $k\leq n$, each $c_i$ is a cycle without exits in $E \setminus (H,S)$, and $f_i(x) \in K[x]$ is a polynomial of smallest degree such that $f_i(c_i) \in I$ (which necessarily has a nonzero constant term). Now, by Theorem \ref{arbitrary ideals}(2), in this situation $I = I(H,S) + \sum_{i=1}^k \langle f_i(c_i)\rangle$ if and only if $I/I(H,S) = \bigoplus_{i=1}^k \langle f_i(c_i)\rangle$. Also, $I(H,S)$ being the irredundant intersection of $n$ prime ideals is equivalent to the zero ideal of $L/I(H,S) \cong L_K(E\setminus (H,S))$ being the irredundant intersection of $n$ prime ideals. By Lemma \ref{gradedprodlemma}, this is equivalent to $(E \setminus (H,S))^{0}$ being the irredundant union of $n$ maximal tails. It follows that (1) and (2) are equivalent when $I$ is not graded.
\end{proof}

With Theorem \ref{productprime} in hand, for contrast we give an example of an ideal in a Leavitt path algebra which is not a product of prime ideals.  

\begin{example}\label{exampleidealnotproductprimes}
Let $E$ be the graph having countably many vertices (indexed by the set $\mathbb{Z}^+$ of positive integers) and no edges. Then $L_K(E) \cong \bigoplus_{i\in \mathbb{Z}^+} K$, the infinite ring direct sum of copies of $K$. For each $j\in \mathbb{Z}^+$ let $P_j$ denote the ideal of  $\bigoplus_{i\in \mathbb{Z}^+} K$ consisting of those elements which are $0$ in the $j^{th}$ coordinate. It is straightforward to show that $\{P_j : j\in \mathbb{Z}^+ \}$ is precisely the set of prime ideals of  $\bigoplus_{i\in \mathbb{Z}^+} K$. It follows that the ideal $\{0\}$ of $\bigoplus_{i\in \mathbb{Z}^+} K$ is not the product of prime ideals.  \hfill $\Box$
\end{example}

Next, we give a characterization of the Leavitt path algebras where every proper ideal is a product of prime ideals. Rings with this property are sometimes referred to as \textit{general ZPI rings}, particularly in the literature on commutative rings.

\begin{theorem} \label{everyone-a-product-of-primes}
The following are equivalent for any Leavitt path algebra $L=L_K(E)$.

\begin{enumerate}
\item[$(1)$] Every proper ideal of $L$ is a product of prime ideals.

\item[$(2)$] There are only finitely many prime ideals minimal over any proper non-prime ideal of $L$.

\item[$(3)$] Every proper homomorphic image of $L$ is either a prime ring or possesses only a finite number of minimal prime ideals.

\item[$(4)$] For every admissible pair $(H,S)$ with $H \neq E^{0}$, $(E \setminus (H,S))^{0}$ is the irredundant union of $n>0$ maximal tails, and there are at most $n$ cycles without exits in $E \setminus (H,S)$. 
\end{enumerate}
\end{theorem}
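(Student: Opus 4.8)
The plan is to prove Theorem~\ref{everyone-a-product-of-primes} by establishing the cycle of implications (1) $\Rightarrow$ (2) $\Rightarrow$ (3) $\Rightarrow$ (4) $\Rightarrow$ (1), built on Theorem~\ref{productprime} (the single-ideal characterization) and two preliminary observations. The first observation is that for any Leavitt path algebra $L_K(F)$, the algebra has only finitely many minimal prime ideals if and only if $F^0$ is a finite union of maximal tails. For the forward direction, every minimal prime is graded, since a non-graded prime of type (3) in Theorem~\ref{specificprime} properly contains its graded part, which is again prime; and the intersection of all minimal primes is the prime radical, which lies inside the Jacobson radical and so is zero by Theorem~\ref{LPAbasics}(2). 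Intersecting finitely many minimal primes with $F^0$ and using Theorem~\ref{arbitrary ideals}(3) then covers $F^0$ by finitely many maximal tails; the converse is exactly Lemma~\ref{gradedprodlemma} together with the fact that every prime contains one of the covering primes. The second observation is that the cycle clause of (4) is \emph{automatic}: if $(E\setminus(H,S))^0$ is covered by $n$ maximal tails, there are at most $n$ cycles without exits in $E\setminus(H,S)$. Indeed, each such cycle $c$ has $s(c)$ in some tail $M_j$ of the cover, and if two distinct exit-free cycles $c,c'$ had their sources in the same $M_j$, then (MT3) would yield a common $w\in M_j$ below both; since a cycle without exits confines reachability to its own vertices, $w$ would lie in $\{c^0\}\cap\{c'^0\}$, contradicting the disjointness of distinct exit-free cycles noted in the proof of Lemma~\ref{Product}.

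With these in hand, three implications are short. For (4) $\Rightarrow$ (1), I would write a proper ideal $I$ as $I(H,S)+\sum_{i\in Y}\langle f_i(c_i)\rangle$ by Theorem~\ref{arbitrary ideals}(1); properness forces $H\neq E^0$, so (4) gives an irredundant union of $n$ maximal tails and at most $n$ exit-free cycles, whence $|Y|\le n$, and Theorem~\ref{productprime} makes $I$ a product of primes. For (1) $\Rightarrow$ (2), a proper non-prime ideal $I=P_1\cdots P_m$ satisfies $I\subseteq\bigcap_j P_j$, so any prime minimal over $I$ contains, hence equals, one of the $P_j$, leaving at most $m$ of them. For (2) $\Rightarrow$ (3), a proper homomorphic image $L/I$ that is not a prime ring forces $I$ to be proper and non-prime, and the minimal primes of $L/I$ correspond to the primes minimal over $I$, finite in number by (2).

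The implication (3) $\Rightarrow$ (4) is the heart of the argument and houses the main obstacle. Fixing an admissible pair $(H,S)$ with $H\neq E^0$ and setting $\bar L=L_K(E\setminus(H,S))\cong L/I(H,S)$, the first observation and the automatic cycle clause reduce everything to showing $\bar L$ is prime or has finitely many minimal primes. When $I(H,S)\neq 0$ this is immediate, since $\bar L$ is then itself a proper nonzero image of $L$ and (3) applies directly. The genuinely delicate case is $I(H,S)=0$, i.e.\ $(H,S)=(\emptyset,\emptyset)$ and $\bar L=L$, where (3) says nothing about $L$ itself; here one must instead \emph{manufacture} a nonzero quotient witnessing any hypothetical failure. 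I would argue by contradiction: supposing $L$ is not prime yet has infinitely many minimal primes, with associated maximal tails $\{M_\alpha\}$, I split into two cases. If some vertex $v$ lies outside $M_\alpha$ for infinitely many $\alpha$, then its hereditary saturated closure $H_0$ is nonempty, proper, and disjoint from infinitely many $M_\alpha$; since an inclusion-maximal maximal tail disjoint from $H_0$ remains an inclusion-maximal maximal tail (a minimal prime) of the quotient graph, $L/I(H_0,B_{H_0})$ is a proper nonzero image with infinitely many minimal primes, contradicting (3). Otherwise every vertex lies in all but finitely many $M_\alpha$, and then for any $u_1,u_2\in E^0$ a single $M_\alpha$ contains both, so (MT3) supplies a common lower bound, forcing $E^0$ to be downward directed and $L$ to be prime --- against assumption. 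This dichotomy, and the construction of the nonzero witnessing quotient in the first case, is the step I expect to require the most care.
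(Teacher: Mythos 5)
Your route is genuinely different from the paper's, and most of it is sound. The paper obtains $(1)\Leftrightarrow(3)$ by citing \cite[Theorem 6.5]{R2}, notes $(2)\Leftrightarrow(3)$, and proves $(1)\Leftrightarrow(4)$ directly from Theorem \ref{productprime}; you instead close the self-contained cycle $(1)\Rightarrow(2)\Rightarrow(3)\Rightarrow(4)\Rightarrow(1)$. Your implications $(1)\Rightarrow(2)$, $(2)\Rightarrow(3)$, $(4)\Rightarrow(1)$ are correct, and so is your first observation: minimal primes are graded, since a type-(3) prime of Theorem \ref{specificprime} properly contains $I(H,B_H)$, which is prime because $M(s(c))$ is downward directed; and the zero ideal, being graded, is semiprime by Theorem \ref{Radical ideals}, hence equals the intersection of the minimal primes (this internal argument is cleaner than invoking ``prime radical $\subseteq$ Jacobson radical''). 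Notably, your second observation is also correct and is a simplification the paper does not exploit: an exit-free cycle traps all reachability on its own vertex set, distinct exit-free cycles are vertex-disjoint, so (MT3) forbids two of their sources from sharing a maximal tail; thus the cycle clause of $(4)$ is automatic, and the second case in the paper's contrapositive proof of $(1)\Rightarrow(4)$ is in fact vacuous. Note also that the delicate $I(H,S)=0$ case exists only because you read ``proper homomorphic image'' as ``nonzero kernel''; under the reading ``nonzero quotient,'' condition $(3)$ covers $L$ itself and the case disappears, which is presumably why the paper calls $(2)\Leftrightarrow(3)$ immediate.

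The genuine gap is in Case A of your $(3)\Rightarrow(4)$ argument. You pass to $L/I(H_0,B_{H_0})$ and assert that the infinitely many minimal primes survive there because an inclusion-maximal maximal tail disjoint from $H_0$ ``remains'' one in the quotient graph. This fails on two counts. First, a minimal prime $P_\alpha=I(H_\alpha,S_\alpha)$ with $H_0\subseteq H_\alpha$ need not contain $I(H_0,B_{H_0})$: in the admissible-pair order this requires $B_{H_0}\subseteq H_\alpha\cup S_\alpha$, and a breaking vertex $w$ of $H_0$ can lie in $E^0\setminus H_\alpha$ with all of its finitely many edges leaving $H_0$ landing in $H_\alpha\setminus H_0$, so that $w\notin B_{H_\alpha}\supseteq S_\alpha$. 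Second, a maximal tail of $E$ disjoint from $H_0$ need not be a maximal tail of $E\setminus(H_0,B_{H_0})$: an infinite emitter of $E$ is exempt from (MT2), but can become a regular vertex of the quotient graph, where (MT2) then fails. Both failures occur already in the graph with vertices $v,w,u_1,u_2,\dots$ and edges $v\to u_i$ $(i\geq 2)$, $w\to u_i$ $(i\geq 1)$: here $H_0=\{v,u_2,u_3,\dots\}$, $w\in B_{H_0}$, the prime $P_\alpha=\langle E^0\setminus\{w\}\rangle$ contains $v$ but not $I(H_0,B_{H_0})$, and the maximal tail $\{w\}$ of $E$ violates (MT2) in $E\setminus(H_0,B_{H_0})$. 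The repair is to drop the graph-theoretic transfer entirely: take $J=\langle v\rangle$, where $v$ is your vertex lying in infinitely many distinct minimal primes $P_\alpha$. Then $0\neq J\subseteq P_\alpha\subsetneq L$, and each such $P_\alpha$ is minimal among primes containing $J$, for if $Q$ is a prime with $J\subseteq Q\subseteq P_\alpha$, then $Q$ is a prime over $\{0\}$ inside $P_\alpha$, so $Q=P_\alpha$ by minimality of $P_\alpha$ over $\{0\}$. Hence $L/J$ is a proper homomorphic image with infinitely many minimal primes, so in particular not a prime ring, contradicting $(3)$. With this substitution your proof is complete.
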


\begin{proof}
(1) $\Leftrightarrow$ (3) is proved in \cite[Theorem 6.5]{R2}, and (2) $\Leftrightarrow$ (3) is immediate. We shall show that (1) $\Leftrightarrow$ (4).

(4) $\Rightarrow$ (1) Suppose that (4) holds, and let $I$ be a proper ideal of $L$. By Theorem \ref{arbitrary ideals}(1), we can write $I=I(H,S)+ \sum_{i\in Y} \langle f_{i}(c_{i})\rangle$, where each $c_{i}$ is a cycle without exits in $E \backslash (H,S)$, and each $f_{i}(x)\in K[x]$ has a nonzero constant term. By hypothesis, $|Y|$ is finite, and so the desired conclusion follows from Theorem \ref{productprime}.

(1) $\Rightarrow$ (4) We shall prove the contrapositive. First, suppose that there is an admissible pair $(H,S)$ with $H \neq E^{0}$, such that $(E \setminus (H,S))^{0}$ is not the union of finitely many maximal tails. Then, by Theorem \ref{productprime}, $I(H,S)$ is not a product of prime ideals.
 
Next, suppose that there is an admissible pair $(H,S)$ with $H \neq E^{0}$, such that $(E \setminus (H,S))^{0}$ is the irredundant union of $n>0$ maximal tails, but there are more than $n$ cycles without exits in $E \setminus (H,S)$. Let $\{c_i : 1\leq i \leq n+1\}$ be a collection of distinct cycles without exits in $E \setminus (H,S)$. Then, by Theorem \ref{arbitrary ideals}(1), $I = I(H,S) + \sum_{i =1}^{n+1} \langle s(c_{i}) + c_{i}\rangle$ is an ideal in $L$, which, by Theorem \ref{productprime}, is not a product of prime ideals.

Thus if (4) does not hold, then neither does (1).
\end{proof}

Clearly, an ideal in a ring is a product of prime ideals if and only if it is a product of prime power ideals. So the results analogous to Theorems \ref{productprime} and \ref{everyone-a-product-of-primes}, with ``prime" in statement (1) replaced by any of the properties appearing in Proposition \ref{quasi-primaryequivalent}, are in fact identical to those two theorems.

\begin{example}
Let $E$ be a graph with one vertex and one loop. Then, the only proper hereditary saturated subset of $E^{0}$ is the empty set. It follows that $E$ satisfies condition (4) in Theorem \ref{everyone-a-product-of-primes}, and hence every proper ideal in $L_K(E)$ is a product of prime ideals. But, as mentioned in Example \ref{primenotprimaryremark}, not every ideal in this ring is prime. 

Alternatively, identifying  $L_K(E)$ with $K[x,x^{-1}]$, as in Example \ref{primenotprimaryremark}, it is easy to deduce that every ideal in this ring is a product of prime ideals, from the fact that it is a principal ideal domain. \hfill $\Box$
\end{example}

We conclude this section with a couple of observations about the relationship between products and intersections of prime ideals, which will be useful for subsequent arguments.

\begin{proposition} \label{intersection-implies-product}
Let $L$ be a Leavitt path algebra, and let $I$ be a proper ideal of $L$. If $I$ is the intersection of finitely many prime ideals, then $I$ is a product of prime ideals.
\end{proposition}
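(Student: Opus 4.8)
The plan is to verify that $I$ satisfies the structural criterion of Theorem \ref{productprime}(2); since that criterion is equivalent to being a product of primes, this suffices. I would write $I = \bigcap_{j=1}^{m} P_j$ with each $P_j$ prime, and use Theorem \ref{arbitrary ideals}(1) to express $I = I(H,S) + \sum_{i\in Y}\langle f_i(c_i)\rangle$. Two things then remain to be established: first, that $(E\setminus(H,S))^{0}$ is the irredundant union of $n>0$ maximal tails; and second, that $|Y|\le n$.

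For the first point, I would begin by showing that the graded part commutes with finite intersections, so that $\mathrm{gr}(I)=\bigcap_{j=1}^{m}\mathrm{gr}(P_j)$: the inclusion $\subseteq$ holds because $\mathrm{gr}(I)$ is a graded ideal contained in each $P_j$, and the reverse inclusion holds because $\bigcap_j \mathrm{gr}(P_j)$ is itself a graded ideal contained in $\bigcap_j P_j = I$, hence contained in the largest graded ideal inside $I$, namely $\mathrm{gr}(I)$. Since the graded part of a prime ideal is again prime by Theorem \ref{specificprime}, it follows that $\mathrm{gr}(I)=I(H,S)$ is an intersection of finitely many graded prime ideals. Passing to $L/I(H,S)\cong L_K(E\setminus(H,S))$, where $\mathrm{gr}(I)$ becomes the zero ideal, and discarding redundant factors, Lemma \ref{gradedprodlemma} yields precisely that $(E\setminus(H,S))^{0}$ is the irredundant union of $n$ maximal tails, with $n\ge 1$ since $I$ is proper.

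For the second point, which I expect to be the genuinely delicate step, I would argue combinatorially inside $\bar E := E\setminus(H,S)$. By Remark \ref{Uniqueness of cycles} the cycles $c_i$ for $i\in Y$ are distinct, and each is a cycle without exits in $\bar E$; as recorded in the proof of Lemma \ref{Product}(2), distinct cycles without exits have disjoint vertex sets. Writing $\bar E^{0}=\bigcup_{l=1}^{n}M_l$ for the irredundant union of maximal tails from the first part, each source $s(c_i)$ lies in some $M_{l(i)}$. I claim the assignment $i\mapsto l(i)$ is injective: if $s(c_i)$ and $s(c_{i'})$ both lie in a single tail $M_l$, then by downward directedness (MT3) there is $w\in M_l$ with $s(c_i)\ge w$ and $s(c_{i'})\ge w$, and since neither cycle has an exit, $w$ must lie on both, forcing $\{c_i^{0}\}\cap\{c_{i'}^{0}\}\neq\emptyset$ and hence $c_i=c_{i'}$. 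Injectivity then gives $|Y|\le n$, completing the verification of Theorem \ref{productprime}(2).

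I would close by observing that the argument is uniform in whether $I$ is graded: when $Y=\emptyset$ one simply has $k=0\le n$, and in the purely graded case one could alternatively invoke Lemma \ref{Product=Intersection}(2) to convert the intersection directly into a product. The main obstacle is the injectivity argument bounding the cycle count, where the interplay between the downward-directedness of maximal tails and the exit-free hypothesis on the cycles is exactly what forces distinct cycles to occupy distinct tails.
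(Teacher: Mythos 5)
Your proof is correct, and it reaches the conclusion by a genuinely different route than the paper. The paper's proof opens the same way---$\mathrm{gr}(I)=\bigcap_{j}\mathrm{gr}(P_j)$, with each $\mathrm{gr}(P_j)$ prime by Theorem \ref{specificprime}---but then turns this intersection into a product using Lemma \ref{Product=Intersection}(2) and finishes in one stroke by citing \cite[Theorem 6.2]{R2} in the form ``if $\mathrm{gr}(I)$ is a product of graded prime ideals, then $I$ is a product of prime ideals.'' You instead verify condition (2) of the paper's own Theorem \ref{productprime}, which obliges you to supply the ingredient that the paper's citation silently absorbs: the bound $|Y|\le n$ on the number of cycles. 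Your argument for that bound---downward directedness of a maximal tail produces a common vertex $w$ reachable from the sources of two exit-free cycles, while exit-freeness confines every path leaving a cycle vertex to that cycle, forcing the two cycles to share a vertex and hence coincide---is sound, and it appears nowhere in the paper. In fact it proves slightly more than you need: whenever $(E\setminus(H,S))^{0}$ is covered by $n$ maximal tails, the graph $E\setminus(H,S)$ has at most $n$ exit-free cycles altogether, so the clause ``$k\le n$'' in Theorem \ref{productprime}(2), and likewise the ``at most $n$ cycles without exits'' clause in Theorem \ref{everyone-a-product-of-primes}(4), is automatically implied by the maximal-tail condition---an observation the paper never makes. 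As for trade-offs: the paper's proof is two lines but rests on the external result \cite[Theorem 6.2]{R2}, while yours is self-contained modulo theorems stated in the paper (Theorem \ref{arbitrary ideals}, Lemma \ref{gradedprodlemma}, Theorem \ref{productprime}) and yields the extra structural insight, at the cost of more work. Finally, your appeal to $\mathrm{gr}(I)$ being the largest graded ideal contained in $I$ is acceptable; the paper relies on the same standard fact, without comment, to write $\mathrm{gr}(I)=\bigcap_{j}\mathrm{gr}(P_j)$.
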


\begin{proof}
Suppose that $I= \bigcap_{i=1}^{n} P_{i}$ for some prime ideals $P_{1}, \dots, P_{n}$. Then, using the fact that, by Theorem \ref{specificprime}, each $\mathrm{gr}(P_{i})$ is prime, together with Lemma \ref{Product=Intersection}(2), we have 
\[
\mathrm{gr}(I)= \bigcap_{i=1}^{n} \mathrm{gr}(P_{i}) =\mathrm{gr}(P_{1})\cdots  \mathrm{gr}(P_{n}).
\]
By \cite[Theorem 6.2]{R2}, $\mathrm{gr}(I)$ being a product of (graded) prime ideals implies that $I$ is a product of prime ideals.
\end{proof}

In contrast to Proposition \ref{intersection-implies-product}, in general, a product of (finitely many) prime ideals in a Leavitt path algebra need not be an intersection of (finitely or infinitely many) prime ideals, as the next lemma shows.  

\begin{lemma} \label{ProductNOTintersection}
Let $E$ be a graph having one vertex and one loop, let $L=L_K(E)$, and let $P$ be a nonzero prime ideal of $L$. Then $P^{2}$ is not the intersection of any collection of prime ideals of $L$.

Consequently, identifying $L$ with $K[x,x^{-1}]$, the analogous result holds in $M_\Lambda(K[x,x^{-1}])$, for any nonempty index set $\Lambda$. 
\end{lemma}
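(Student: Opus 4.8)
The plan is to reduce both claims to a single elementary observation: an intersection of prime ideals is always semiprime, whereas $P^2$ fails to be semiprime. First I would record that fact. If an ideal $I$ of a ring is an intersection $I=\bigcap_{\alpha} Q_\alpha$ of prime ideals, then $I$ is semiprime: given an ideal $J$ with $J^2\subseteq I$, we have $J^2\subseteq Q_\alpha$ for every $\alpha$, and primeness of $Q_\alpha$ forces $J\subseteq Q_\alpha$, so intersecting yields $J\subseteq I$. Consequently it suffices to prove that $P^2$ is not semiprime (and then apply the contrapositive).

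For the first statement, identify $L$ with $K[x,x^{-1}]$ as in Example \ref{primenotprimaryremark}. Since this ring is a principal ideal domain, the nonzero prime $P$ has the form $P=\langle p\rangle$ for an irreducible, hence non-unit, element $p$, and $P^2=\langle p\rangle\langle p\rangle=\langle p^2\rangle$. I would check that $P\not\subseteq P^2$: otherwise $p=p^2 h$ for some $h\in K[x,x^{-1}]$, and cancelling $p$ in the domain $K[x,x^{-1}]$ gives $ph=1$, contradicting that $p$ is a non-unit. Taking $J=P$ then witnesses $J^2=P^2\subseteq P^2$ while $J\not\subseteq P^2$, so $P^2$ is not semiprime; by the previous paragraph it is not the intersection of any collection of prime ideals. (Alternatively, one could invoke the square-free characterization of semiprime ideals in Theorem \ref{Radical ideals}, since $p^2$ is not square-free.)

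For the matrix ring, write $R=K[x,x^{-1}]$ and use the lattice isomorphism $I\mapsto M_\Lambda(I)$ of Proposition \ref{Morita equivalence}. The step requiring the most care is verifying that this correspondence carries prime ideals to prime ideals. Since every ideal of $M_\Lambda(R)$ is of the form $M_\Lambda(A)$, since the map is inclusion-preserving and injective, and since $M_\Lambda(A)M_\Lambda(B)=M_\Lambda(AB)$ by Proposition \ref{Morita equivalence}(2), the condition $M_\Lambda(A)M_\Lambda(B)\subseteq M_\Lambda(P)$ translates to $AB\subseteq P$, and $M_\Lambda(A)\subseteq M_\Lambda(P)$ to $A\subseteq P$; hence $M_\Lambda(P)$ is prime precisely when $P$ is. Thus a nonzero prime $\mathcal{P}$ of $M_\Lambda(R)$ equals $M_\Lambda(P)$ for a nonzero prime $P$ of $R$, and $\mathcal{P}^2=M_\Lambda(P^2)$, again by Proposition \ref{Morita equivalence}(2).

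Finally I would observe that $M_\Lambda(-)$ commutes with arbitrary intersections: a matrix lies in $\bigcap_\alpha M_\Lambda(Q_\alpha)$ if and only if each of its finitely many nonzero entries lies in every $Q_\alpha$, that is, if and only if it lies in $M_\Lambda\bigl(\bigcap_\alpha Q_\alpha\bigr)$. So were $\mathcal{P}^2$ equal to an intersection $\bigcap_\alpha M_\Lambda(Q_\alpha)$ of prime ideals, injectivity of the lattice map would give $P^2=\bigcap_\alpha Q_\alpha$, expressing $P^2$ as an intersection of primes of $R$ and contradicting the first part. The only genuinely delicate bookkeeping is confirming that the Morita correspondence respects both primeness and infinite intersections; the rest of the argument is formal.
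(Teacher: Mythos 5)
Your proof is correct, but its engine differs from the paper's. The paper argues via maximality: since $K[x,x^{-1}]$ is a principal ideal domain, a nonzero prime $P$ is maximal and satisfies $P^{2}\neq P$; so if $P^{2}=\bigcap_{i}Q_{i}$ with each $Q_{i}$ prime, then $Q_{i}\supseteq P^{2}$ forces $Q_{i}\supseteq P$, hence $Q_{i}=P$ by maximality, giving $P^{2}=P$, a contradiction. You instead route everything through semiprimeness: an intersection of prime ideals is semiprime, while $P^{2}$ is not (take $J=P$, noting $P\not\subseteq P^{2}$ by cancellation in the domain). Your version buys a little extra generality --- it never uses that nonzero primes of $K[x,x^{-1}]$ are maximal, only that $P\not\subseteq P^{2}$ --- and it dovetails naturally with the paper's semiprime machinery (Theorem \ref{Radical ideals}), as you observe; the paper's version is a touch shorter once the PID facts are granted. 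For the matrix statement, the paper simply cites Proposition \ref{Morita equivalence}, whereas you supply the details that citation leaves implicit: that $I\mapsto M_{\Lambda}(I)$ preserves and reflects primeness (via $M_{\Lambda}(A)M_{\Lambda}(B)=M_{\Lambda}(AB)$ together with injectivity and order-preservation), and that it commutes with arbitrary intersections (checked entrywise, or by noting that an order isomorphism of ideal lattices preserves arbitrary infima). That extra care is warranted, since the proposition as stated only gives the lattice isomorphism and products of pairs of ideals, so your write-up is if anything more complete than the original on this point.
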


\begin{proof}
Recall that, as noted in Example \ref{primenotprimaryremark}, $L \cong K[x,x^{-1}]$. Now let $P$ be a nonzero prime ideal of $K[x,x^{-1}]$. It is well-known and easy to see (since $K[x,x^{-1}]$ is a principal ideal domain) that $P$ is necessarily maximal and $P^{2} \neq P$.  Suppose that $P^{2}= \bigcap_{i\in Y} Q_{i}$, for some prime ideals $Q_{i}$ of $K[x,x^{-1}]$.  Then, for each $i$, $Q_{i}\supseteq P^{2}$ implies that $Q_{i}\supseteq P$, and hence $Q_{i}=P$, as $P$ is maximal. But then $P^{2}= \bigcap_{i\in Y} P=P$ contradicts $P^{2} \neq P$, and hence $P^{2}$ is not the intersection of any collection of prime ideals.  

The second statement follows from Proposition \ref{Morita equivalence}.  
\end{proof} 

With Lemma \ref{ProductNOTintersection} in mind, we note that Theorem \ref{Every I radical} below gives criteria under which every product of prime ideals in a Leavitt path algebra is the intersection of (those same) prime ideals.  

%
%

\section{Semiprime Ideals}\label{semiprimesection}

Having analyzed the prime and ``product of prime" ideals in the previous section, we now turn our attention to a similar analysis in the context of semiprime and ``product of semiprime" ideals.  As is standard, an ideal being a ``product of semiprime ideals" includes the possibility that the ideal itself is semiprime. We shall give a description of the Leavitt path algebras where every proper ideal is semiprime (Theorem \ref{Every I radical}), a description of the ideals in an arbitrary Leavitt path algebra which can be written as products of semiprime ideals (Theorem \ref{Product of semiprimes}), and a classification of those graphs $E$ for which every proper ideal of $L_K(E)$ admits such a factorization (Theorem \ref{Every I is prod. semiprimes}).  

We begin with a technical lemma and some notation.

\begin{lemma} \label{intersect-lemma}
Let $L=L_K(E)$ be a Leavitt path algebra, and let $I \subseteq J$ be ideals of $L$, such that $I$ is not graded. Write $I=I(H,S)+\sum_{i\in Y} \langle f_{i}(c_{i})\rangle$, using the notation of Theorem \ref{arbitrary ideals}(1). Also, let $M = \bigoplus_{i \in Y} \langle \{c_i^0\}\rangle$ in $L/I(H,S)\cong L_{K} (E\backslash(H,S))$. Then exactly one of the following holds.
\begin{enumerate}
\item[$(1)$] $M\cap \mathrm{gr}(J/I(H,S)) = M$.

\item[$(2)$] $M\cap \mathrm{gr}(J/I(H,S)) = \{0\}$, and $M\cap J/I(H,S) = \sum_{i\in Y'}\langle g_{i}(c_{i})\rangle$ for some $Y' \subseteq Y$ and $g_{i}(x)\in K[x]$, having nonzero constant terms.
\end{enumerate}
\end{lemma}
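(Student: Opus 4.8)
The plan is to work inside the quotient $\bar L := L/I(H,S) \cong L_K(E\setminus(H,S))$, where by Theorem \ref{arbitrary ideals}(2) the image of $I$ is $\bar I := I/I(H,S) = \bigoplus_{i\in Y}\langle f_i(c_i)\rangle \subseteq M$, and where the hypothesis $I\subseteq J$ becomes $\bar I \subseteq \bar J := J/I(H,S)$. The two objects to understand are $M\cap\bar J$ and $M\cap\mathrm{gr}(\bar J)$; note that $M$ is graded (being generated by the vertices $\{c_i^0\}$) and $\mathrm{gr}(\bar J)$ is graded, so $M\cap\mathrm{gr}(\bar J)$ is a graded ideal of $\bar L$. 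The first step is to reduce everything to the individual cycle components $M_i := \langle\{c_i^0\}\rangle$.

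Next I would establish the orthogonal decomposition $M = \bigoplus_{i\in Y} M_i$ as a ring direct sum. Since the $c_i$ are distinct cycles without exits, the fact that $\{c_i^0\}\cap\{c_j^0\}=\emptyset$ (used in the proof of Lemma \ref{Product}(2)) yields $M_iM_j = 0$ for $i\neq j$, so the $M_i$ are pairwise orthogonal two-sided ideals. Using local units supported on $\{c_i^0\}$, any ideal $N$ of $\bar L$ contained in $M$ satisfies $N = \bigoplus_{i\in Y}(N\cap M_i)$; applying this to $N = M\cap\bar J$ and to $N = M\cap\mathrm{gr}(\bar J)$ reduces the statement to a component-by-component analysis.

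For a single component I would use that, by Lemma \ref{cycle-ideal-lemma}, $M_i\cong M_{\Lambda_i}(K[x,x^{-1}])$, so by Proposition \ref{Morita equivalence}(1) the ideal $M_i\cap\bar J$ corresponds to an ideal $\langle h_i\rangle$ of the PID $K[x,x^{-1}]$. Because $\bar I\subseteq\bar J$ forces $\langle f_i(c_i)\rangle\subseteq M_i\cap\bar J$, we get $\langle f_i\rangle\subseteq\langle h_i\rangle$, i.e.\ $h_i\mid f_i$; since $f_i$ has nonzero constant term, $h_i$ may be taken in $K[x]$ with nonzero constant term, and transporting back gives $M_i\cap\bar J = \langle g_i(c_i)\rangle$ for some $g_i\in K[x]$ with nonzero constant term (with $g_i$ a unit, say $g_i = 1$, exactly when $M_i\subseteq\bar J$). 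This produces the generators required in statement (2). For the graded part I would invoke Remark \ref{cycle-ideal-remark}: under the isomorphism $c_i$ is sent to a diagonal matrix with entries $x$, so the $\mathbb Z$-grading on $M_i$ is the one in which $x$ is homogeneous of degree $|c_i| > 0$; hence $K[x,x^{-1}]$ is a graded field, whose only graded ideals are $\{0\}$ and the whole ring. Consequently $M_i\cap\mathrm{gr}(\bar J)$, being a graded ideal of $M_i$ contained in $\langle g_i(c_i)\rangle$, equals $M_i$ when $g_i$ is a unit (then $M_i\subseteq\bar J$, and $M_i$ is graded) and equals $\{0\}$ otherwise, since a non-monomial Laurent polynomial generates an ideal containing no nonzero homogeneous element.

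The final step is to assemble these local conclusions into the global dichotomy: case (1) should hold precisely when every $g_i$ is a unit, and case (2) otherwise. I expect this assembly to be the main obstacle. The component analysis above cleanly yields, for each $i$, the local alternative $M_i\cap\mathrm{gr}(\bar J)\in\{\{0\},M_i\}$ together with the generator $g_i$; the delicate remaining point is to show that these local alternatives are forced to agree across $Y$, ruling out a ``partial'' situation in which some $M_i\subseteq\bar J$ while other components meet $\bar J$ in a proper ideal. This is exactly where the hypotheses must be exploited most carefully, and I would look to leverage the interaction between $\mathrm{gr}(\bar J)$ and $\bar J$, together with the vanishing of the Jacobson radical (Theorem \ref{LPAbasics}(2), in the spirit of the argument in Remark \ref{Uniqueness of cycles}), to close this gap.
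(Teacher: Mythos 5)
Your component-by-component analysis is sound, and it is in fact a somewhat cleaner route to the generator description in alternative (2) than the paper's: where you obtain $M_i\cap\bar{J}=\langle g_i(c_i)\rangle$ directly from Lemma \ref{cycle-ideal-lemma}, Remark \ref{cycle-ideal-remark}, Proposition \ref{Morita equivalence}, and the fact that $K[x,x^{-1}]$ is a principal ideal domain, the paper instead writes $J$ itself in the canonical form of Theorem \ref{arbitrary ideals}(1), expands $M\cap\bar{J}$ using distributivity of the ideal lattice \cite[Theorem 4.3]{R2}, and then matches the cycles of $J$ against the $c_i$ via Lemma \ref{Product}(2) together with the vanishing of the Jacobson radical (Theorem \ref{LPAbasics}(2)). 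Note that this cycle-matching is the only place the paper invokes the radical: it is used inside case (2), not to prove the dichotomy, so the tool you propose for your missing step is not the one the paper uses there. The paper's dichotomy argument is instead one line: $M\cap \mathrm{gr}(\bar{J})$ is a graded ideal, hence idempotent by Lemma \ref{Product=Intersection}(1), and the paper asserts that the property ``$A^2\neq A$ for every nonzero proper ideal $A$,'' which holds in $K[x,x^{-1}]$ and hence in each $M_{\Lambda_i}(K[x,x^{-1}])$ by Proposition \ref{Morita equivalence}, holds also in $M\cong\bigoplus_{i\in Y}M_{\Lambda_i}(K[x,x^{-1}])$, forcing $M\cap\mathrm{gr}(\bar{J})\in\{\{0\},M\}$.

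As written, however, your proposal does not prove the lemma: the statement to be proved \emph{is} the global ``all or nothing'' dichotomy, and you explicitly leave the assembly step open, so this is a genuine gap. Moreover, your suspicion that the assembly is the real obstacle is well founded, because it cannot be closed: idempotence does not propagate to the direct sum as the paper claims once $|Y|\geq 2$, since the partial sums $\bigoplus_{i\in Y''}M_i$ with $\emptyset\neq Y''\subsetneq Y$ are nonzero proper idempotent ideals of $M$, and the ``mixed'' situation you wanted to rule out actually occurs. Concretely, let $E$ consist of two vertices $v_1,v_2$ carrying disjoint loops $c_1,c_2$, so that $L\cong K[x,x^{-1}]\oplus K[y,y^{-1}]$; take $I=\langle v_1+c_1\rangle+\langle v_2+c_2\rangle$, which is non-graded with $\mathrm{gr}(I)=\{0\}$, $Y=\{1,2\}$, and $M=L$, and take $J=\langle v_1\rangle+\langle v_2+c_2\rangle\supseteq I$. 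Then $\mathrm{gr}(J)=\langle v_1\rangle$, so $M\cap\mathrm{gr}(\bar{J})=\langle v_1\rangle$ is neither $\{0\}$ nor $M$, and both alternatives of the lemma fail. So the accurate assessment is this: what your argument establishes is the componentwise dichotomy ($M_i\cap\mathrm{gr}(\bar{J})\in\{\{0\},M_i\}$ and $M_i\cap\bar{J}=\langle g_i(c_i)\rangle$ for each $i$), which is the statement that is actually true; the global dichotomy you could not prove is precisely the point at which the paper's own proof is in error.
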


\begin{proof}
First, we note that in $L/I(H,S)\cong L_{K} (E\backslash(H,S))$, by Theorem \ref{arbitrary ideals}(2) and Lemma \ref{cycle-ideal-lemma}, we have
\[
\bar{I}=I/I(H,S)=\bigoplus_{i\in Y} \langle f_{i}(c_{i})\rangle\subseteq M= \bigoplus_{i\in Y}\langle \{c_i^0\}\rangle \cong \bigoplus_{i\in Y} M_{\Lambda_{i}}(K[x,x^{-1}])
\]
for some index sets $\Lambda_{i}$. 

Now, using Theorem \ref{arbitrary ideals}(1), we can write $J = I(H',S')+\sum_{j\in X} \langle g_{j}(d_{j})\rangle$, where each $d_{j}$ is a cycle without exits in $E\backslash(H',S')$, and each $g_{j}(x)\in K[x]$ has a nonzero constant term. Letting $\bar{J} = J/I(H,S)$, we note that the ideal $M\cap \mathrm{gr}(\bar{J})$ of $L/I(H,S) \cong L_{K} (E\backslash(H,S))$ is graded, and hence, by Lemma \ref{Product=Intersection}(1), $(M\cap \mathrm{gr}(\bar{J}))^{2}=(M\cap \mathrm{gr}(\bar{J}))$. It is easy to see that $A^{2}\neq A$ for any nonzero proper ideal $A$ of $K[x,x^{-1}]$, and so, by Proposition \ref{Morita equivalence}, this property holds for ideals in any matrix ring $M_{\Lambda}(K[x,x^{-1}])$, and hence also in $M\cong \bigoplus_{i\in Y} M_{\Lambda_{i}}(K[x,x^{-1}])$. It follows that either $M\cap \mathrm{gr}(\bar{J}) = \{0\}$ or $M\cap \mathrm{gr}(\bar{J}) = M$.

To conclude the proof, let us assume that $M\cap \mathrm{gr}(\bar{J}) = \{0\}$ and describe $M\cap \bar{J}$. Using the distributive law for ideals of a Leavitt path algebra \cite[Theorem 4.3]{R2}, we have
\begin{align*}
M\cap\bar{J} & = [M\cap \mathrm{gr}(\bar{J})]+\sum_{j\in X} [M\cap\langle g_{j}(d_{j})\rangle] \\
& = \{0\} + \sum_{j\in X}[M\cap\langle g_{j}(d_{j})\rangle] =\sum_{j\in X}\sum_{i\in Y} [ \langle \{c_i^0\}\rangle \cap\langle g_{j}(d_{j})\rangle].
\end{align*}
We claim that for all $i$ and $j$, either $\langle \{c_i^0\}\rangle \cap\langle g_{j}(d_{j})\rangle = \{0\}$ or $c_i = d_j$, in which case $\langle \{c_i^0\}\rangle \cap\langle g_{j}(d_{j})\rangle=\langle g_{j}(d_{j})\rangle$. Thus, suppose that $G=\langle \{c_i^0\}\rangle \cap\langle g_{j}(d_{j})\rangle\neq \{0\}$. Since $M\cap \mathrm{gr}(\bar{J}) = \{0\}$, it follows that $s(c_{i})\in E^{0}\backslash H'$, and hence both $c_{i}$ and $d_{j}$ are cycles without exits in $E\backslash (H',S')$. Next, recall that $\langle \{c_{i}^{0}\}\rangle \cong M_{\Lambda_{i}}(K[x,x^{-1}])$, and, as noted in Remark \ref{cycle-ideal-remark}, the isomorphism involved sends $c_{i}$ to a matrix in $M_{\Lambda_{i}}(K[x,x^{-1}])$ which has $x$ as one of the entries and zeros elsewhere. Thus, it follows from Proposition \ref{Morita equivalence} and $K[x,x^{-1}]$ being a principal ideal domain that $G=\langle h(c_{i})\rangle$, for some nonzero $h(x)\in K[x,x^{-1}]$. Now, suppose that $c_{i}\neq d_{j}$. Then, in the quotient ring $L/I(H',S') \cong L_{K}(E\backslash (H',S'))$ of $L/I(H,S)$, by Lemma \ref{Product}(2), we have $\langle h(c_{i}) \rangle \langle g_{j}(d_{j}) \rangle = \{0\}$, since $c_{i}$ and $d_{j}$ are cycles without exits in $E\backslash (H',S')$. As $\langle h(c_{i}) \rangle \subseteq \langle g_{j}(d_{j}) \rangle$, this implies that $\langle h(c_{i}) \rangle \neq 0$ but $\langle h(c_{i}) \rangle^2 = \{0\}$ in $L/I(H',S') \cong L_{K}(E\backslash (H',S'))$, which contradicts Theorem \ref{LPAbasics}(2). Thus $c_{i} = d_{j}$, and so $g_{j}(d_{j})=g_{j}(c_{i})\in \langle \{c_i^0\}\rangle$, which implies that $\langle \{c_i^0\}\rangle \cap\langle g_{j}(d_{j})\rangle=\langle g_{j}(c_{i})\rangle$. Noting that the cycles $d_{j}$ are distinct, we conclude that
\[
M\cap\bar{J} = \sum_{j\in X}\sum_{i\in Y} [\langle \{c_i^0\}\rangle\cap\langle g_{j}(d_{j})\rangle] = \sum_{i\in Y'}\langle g_{i}(c_{i})\rangle
\]
for some $Y' \subseteq Y$, upon reindexing the $g_{j}$.
\end{proof}

\begin{notation}
Let $I$ be a non-graded ideal in a Leavitt path algebra $L_K(E)$. In the notation of Theorem \ref{arbitrary ideals}(1), $I$ can be expressed as $I=I(H,S)+ \sum_{i\in Y} \langle f_{i}(c_{i})\rangle$ for some cycles $c_{i}$. As mentioned in Remark \ref{Uniqueness of cycles}, the cycles $c_{i}$ are uniquely determined by $I$. For the remainder of this section we shall denote $\{c_{i} : i\in Y\}$ by $\mathrm{Cyc}(I)$. 
\end{notation}

We require one more technical result, which will allow us to rewrite products of (general) ideals in a more convenient way.

\begin{proposition} \label{EqualGradedPart} 
Let $L=L_K(E)$ be a Leavitt path algebra, and let $I, A_{1}, \dots, A_{n}$ be proper ideals of $L$ such that $I=A_{1}\cdots A_{n}$ and $I \not\subseteq \mathrm{gr}(A_{k})$ for all $k \in \{1,\dots,n\}$. Then there exist ideals $B_{1}, \dots, B_{n}$ of $L$ such that $I=B_{1}\cdots B_{n}$, and for each  $k \in \{1,\dots,n\}$ we have $B_{k}\subseteq A_{k}$, $\mathrm{gr}(B_{k})=\mathrm{gr}(I)$, and $\mathrm{Cyc}(B_{k}) = \mathrm{Cyc}(I)$. Moreover, if $A_{k}$ is semiprime for some $k \in \{1, \dots, n\}$, then so is $B_{k}$.
\end{proposition}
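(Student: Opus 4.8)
The plan is to pass to the quotient $\bar L = L/\mathrm{gr}(I)$ and to obtain each $B_k$ by cutting $A_k$ down to the portion of $\bar L$ supported on the cycles of $I$. First I would note that the hypotheses force $I$ to be non-graded: since $I = A_1\cdots A_n\subseteq A_k$, were $I$ graded we would have $I = \mathrm{gr}(I)\subseteq\mathrm{gr}(A_k)$ (the graded part being the largest graded ideal inside $A_k$), contrary to $I\not\subseteq\mathrm{gr}(A_k)$. So I may write $\mathrm{gr}(I) = I(H,S)$ and $I = I(H,S)+\sum_{i\in Y}\langle f_i(c_i)\rangle$ as in Theorem~\ref{arbitrary ideals}(1), and work in $\bar L = L/I(H,S)\cong L_K(E\setminus(H,S))$. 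There, writing $\bar I = I/I(H,S) = \bigoplus_{i\in Y}\langle f_i(c_i)\rangle\subseteq M := \bigoplus_{i\in Y}\langle\{c_i^0\}\rangle$ (Theorem~\ref{arbitrary ideals}(2)), the ideals $\bar A_k = A_k/I(H,S)$ are well defined since $I(H,S)\subseteq A_k$, and $\bar I\subseteq\bar A_k$.

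Next I would apply Lemma~\ref{intersect-lemma} to the inclusion $I\subseteq A_k$. Its alternative (1) would give $M\subseteq\mathrm{gr}(\bar A_k)$, hence $\bar I\subseteq\mathrm{gr}(\bar A_k)$, i.e.\ $I\subseteq\mathrm{gr}(A_k)$, which the hypothesis forbids; so alternative (2) holds, giving $M\cap\mathrm{gr}(\bar A_k) = \{0\}$ and $M\cap\bar A_k = \sum_{i\in Y'_k}\langle g_i^{(k)}(c_i)\rangle$ with $Y'_k\subseteq Y$ and each $g_i^{(k)}$ of nonzero constant term. Since $\bar I\subseteq M\cap\bar A_k$ and each block $\langle f_i(c_i)\rangle$ of $\bar I$ is nonzero, I would conclude $Y'_k = Y$. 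I then define $B_k$ to be the ideal of $L$ with $B_k\supseteq I(H,S)$ and $B_k/I(H,S) = M\cap\bar A_k =: \bar B_k$. At this point $B_k\subseteq A_k$ is immediate, $I\subseteq B_k$ follows from $\bar I\subseteq\bar B_k$, and $\mathrm{gr}(B_k) = I(H,S) = \mathrm{gr}(I)$ because $\mathrm{gr}(\bar B_k)\subseteq M\cap\mathrm{gr}(\bar A_k) = \{0\}$; the resulting representation $B_k = I(H,S)+\sum_{i\in Y}\langle g_i^{(k)}(c_i)\rangle$ then yields $\mathrm{Cyc}(B_k) = \{c_i:i\in Y\} = \mathrm{Cyc}(I)$ via Remark~\ref{Uniqueness of cycles}.

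The crux is the identity $I = B_1\cdots B_n$, which I would establish entirely in $\bar L$. The key point is that $M$ is graded, so Lemma~\ref{Product=Intersection}(1) gives $M\cap X = MX$ for every ideal $X$ and also $M^2 = M$ (hence $M^n = M$); combining this with the commutativity of ideal multiplication in a Leavitt path algebra, I would compute
\[
\bar B_1\cdots\bar B_n = (M\bar A_1)\cdots(M\bar A_n) = M^n\,\bar A_1\cdots\bar A_n = M\cap(\bar A_1\cdots\bar A_n) = M\cap\bar I = \bar I,
\]
using $\bar A_1\cdots\bar A_n = \bar I$ and $\bar I\subseteq M$ at the end. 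Since $I(H,S) = I(H,S)^n\subseteq B_1\cdots B_n$, lifting this equality back through $L\to\bar L$ gives $I = B_1\cdots B_n$. I expect this product identity to be the main obstacle, and the device that overcomes it is precisely the graded-ideal identity $M\cap X = MX$ together with commutativity, which lets the cut-down by $M$ be distributed across all the factors at once.

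Finally, for the semiprimeness claim, suppose $A_k$ is semiprime. Then $\bar A_k$ is semiprime in $\bar L$, since semiprimeness (being equivalent to equality with the intersection of the containing primes) passes to the quotient by the graded ideal $I(H,S)\subseteq A_k$; and $M$ is semiprime, being graded. As an intersection of two semiprime ideals is semiprime, $\bar B_k = M\cap\bar A_k$ is semiprime, so Theorem~\ref{Radical ideals} (applied in $\bar L$, using $\mathrm{gr}(\bar B_k) = \{0\}$) forces each $g_i^{(k)}$ to be square-free; the same theorem, applied in $L$ to $B_k = I(H,S)+\sum_{i\in Y}\langle g_i^{(k)}(c_i)\rangle$, then shows that $B_k$ is semiprime.
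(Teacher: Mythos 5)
Your proof is correct and takes essentially the same route as the paper's: both pass to $L/\mathrm{gr}(I)$, define $B_k$ as the preimage of $M\cap\bar{A}_k$, use Lemma \ref{intersect-lemma} to rule out the case $M\cap\mathrm{gr}(\bar{A}_k)=M$ and pin down the form of $M\cap\bar{A}_k$, and establish $I=B_1\cdots B_n$ via the graded-ideal identity of Lemma \ref{Product=Intersection}(1) together with commutativity of ideal multiplication. The only (immaterial) difference is that you deduce $Y'_k=Y$ directly from the directness of the sum $M=\bigoplus_{i\in Y}\langle\{c_i^0\}\rangle$, whereas the paper obtains the containment $\mathrm{Cyc}(I)\subseteq\mathrm{Cyc}(B_k)$ from the product identity via Lemma \ref{Product}(2).
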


\begin{proof}
If $I=A_{1}\cdots A_{n}$ were a graded ideal, then, by Lemma \ref{Product=Intersection}(2), 
\[
I = A_{1} \cap \cdots \cap A_{n} =\mathrm{gr}(A_{1})\cap \cdots \cap \mathrm{gr}(A_{n}),
\] 
contrary to our hypothesis that $I \not\subseteq \mathrm{gr}(A_{1})$. Thus $I$ is necessarily not graded, and so, by Theorem \ref{arbitrary ideals}(1), we can write $I=I(H,S)+\sum_{i\in Y} \langle f_{i}(c_{i})\rangle$, where $Y \neq \emptyset$, each $c_{i}$ is a cycle without exits in $E\backslash(H,S)$, and each $f_{i}(x)\in K[x]$ has a nonzero constant term. Likewise, for each $k \in \{1,\dots,n\}$ we can write $A_{k} = I(H_{k},S_{k})+\sum_{j\in X_{k}} \langle g_{jk}(c_{jk})\rangle$, where each $c_{jk}$ is a cycle without exits in $E\backslash(H_{k},S_{k})$, and each $g_{jk}(x)\in K[x]$ has a nonzero constant term. Before defining the appropriate $B_{k}$, we shall first need to relate the structure of the $A_{k}$ to that of $I$ more closely.

In $L/I(H,S)\cong L_{K} (E\backslash(H,S))$, we have, by Theorem \ref{arbitrary ideals}(2),
\[
\bar{I}=I/I(H,S)=\bigoplus_{i\in Y} \langle f_{i}(c_{i})\rangle\subseteq M= \bigoplus_{i\in Y}\langle \{c_i^0\}\rangle.
\]
Since $I(H,S) \subseteq I \subseteq A_{k}$ for each $k \in \{1,\dots,n\}$, it follows that $\bar{I}=\bar{A}_{1} \cdots \bar{A}_{n}$, where $\bar{A}_{k}=A_{k}/I(H,S)$. Using the fact that $M$ is a graded ideal containing $\bar{I}$ and Lemma \ref{Product=Intersection}(1), we then obtain
\[
\bar{I} =M\bar{I}=M^{n}\bar{A}_{1}\cdots \bar{A}_{n}=M\bar{A}_{1} \cdots M\bar{A}_{n} =(M\cap\bar{A}_{1}) \cdots (M\cap\bar{A}_{n}).
\]
Now, our hypothesis that $I \not\subseteq \mathrm{gr}(A_{k})$ implies that $M\cap \bar{A}_{k} \neq M$, for each $k \in \{1,\dots,n\}$. Hence, by Lemma \ref{intersect-lemma}, we have $M\cap \mathrm{gr}(\bar{A}_{k})=\{0\}$ and $M\cap\bar{A}_{k} = \sum_{i\in Y'_{k}}\langle g_{ik}(c_{i})\rangle$ for some $Y'_{k} \subseteq Y$ and $g_{ik}(x)\in K[x]$, having nonzero constant terms.

We are now ready to define the $B_{k}$. Specifically, for each $k \in \{1,\dots,n\}$ let $B_{k}$ be the ideal of $L$ satisfying $I(H,S)\subseteq B_{k}\subseteq A_{k}$ such that $\bar{B}_{k}=B_{k}/I(H,S)=M\cap\bar{A}_{k}.$
Then
\[
\bar{B}_{1} \cdots \bar{B}_{n}=(M\cap\bar{A}_{1}) \cdots (M\cap\bar{A}_{n})=\bar{I}=\bar
{A}_{1} \cdots \bar{A}_{n}.
\]
Since $I(H,S)\subseteq B_{k}$ for each $k$, it follows that $B_{1}\cdots B_{n} = I$. Also, since $M\cap \mathrm{gr}(\bar{A}_{k})=\{0\}$ and $\bar{B}_{k} = \sum_{i\in Y'_{k}}\langle g_{ik}(c_{i})\rangle$, we conclude that $\mathrm{gr}(B_{k})=I(H,S) =\mathrm{gr}(I)$ and $ \mathrm{Cyc}(B_{k})\subseteq \mathrm{Cyc}(I)$ for each $k$. On the other hand, by Lemma \ref{Product}, $\bigoplus_{i\in Y}\langle f_{i}(c_{i})\rangle=\bar{I}=\bar{B}_{1}\cdots \bar{B}_{n}$ implies that $\mathrm{Cyc}(I)\subseteq \mathrm{Cyc}(B_{k})$ for each $k$. Hence $\mathrm{Cyc}(B_{k})= \mathrm{Cyc}(I)$ for all $k \in \{1,\dots,n\}$.

Finally, if some $A_{k}$ is semiprime, then so is $\bar{A}_{k}$, and hence so is $\bar{B}_{k} = M\cap\bar{A}_{k}$, since $M$ is semiprime, by Theorem \ref{Radical ideals}. It follows that $B_{k}$ is semiprime as well.
\end{proof}

We now turn to our analysis of semiprime ideals in a Leavitt path algebra. Recall that a description of semiprime ideals in terms of their generating sets is given in Theorem \ref{Radical ideals}. We next answer the followup question: when is \textit{every} proper ideal in a Leavitt path algebra semiprime?   

\begin{theorem} \label{Every I radical}
The following are equivalent for any Leavitt path algebra $L=L_K(E)$.
\begin{enumerate}
\item[$(1)$] Every proper ideal of $L$ is semiprime.

\item[$(2)$] Every ideal of $L$ is graded.

\item[$(3)$] For any finite list $P_{1}, \dots, P_{n}$ of prime ideals of $L$, we have $P_{1}\cdots  P_{n} = P_{1}\cap\cdots \cap P_{n}$.

\item[$(4)$] The graph $E$ satisfies Condition (K).
\end{enumerate}
\end{theorem}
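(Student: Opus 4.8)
The plan is to exploit that the equivalence (2) $\Leftrightarrow$ (4) is already on the table: it is precisely Theorem \ref{LPAbasics}(1). The whole theorem thus reduces to attaching (1) and (3) to the pair (2) and (4), which I would do by establishing the implications $(2)\Rightarrow(1)\Rightarrow(2)$ and $(2)\Rightarrow(3)\Rightarrow(4)$, passing between (4) and (2) through Theorem \ref{LPAbasics}(1) whenever convenient. The two ``outgoing'' implications from (2) are soft; the two implications \emph{into} (2)/(4), namely $(1)\Rightarrow(2)$ and $(3)\Rightarrow(4)$, carry the real content, and I would prove both by contraposition, manufacturing an explicit bad ideal from a cycle.

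For the easy directions, assume (2). Then every proper ideal $I$ is graded, so in the description of Theorem \ref{arbitrary ideals}(1) its index set $Y$ is empty; the square-free hypothesis of Theorem \ref{Radical ideals} is then vacuous, so $I$ is semiprime, giving (1). For (3), note that every prime ideal is now graded, that an intersection of graded ideals is graded, and so a short induction on $n$ using Lemma \ref{Product=Intersection}(1) (which gives $QP = Q\cap P$ whenever $Q$ is graded) yields $P_1\cdots P_n = (P_1\cap\cdots\cap P_{n-1})P_n = P_1\cap\cdots\cap P_n$.

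The heart is the contrapositive of $(1)\Rightarrow(2)$: from a \emph{non-graded} ideal I would produce a proper non-semiprime ideal. By Theorem \ref{arbitrary ideals}(1) a non-graded ideal supplies a cycle $c$ without exits in $E\setminus(H,S)$ for some admissible $(H,S)$. Fixing an irreducible $p(x)\in K[x]$ with nonzero constant term (say $p(x)=x-1$), I would set $J = I(H,S)+\langle p^2(c)\rangle$ and $A = I(H,S)+\langle p(c)\rangle$. Using Lemma \ref{Product=Intersection}(1) to absorb the cross terms $I(H,S)\langle p(c)\rangle\subseteq I(H,S)$, and Lemma \ref{Product}(1) to compute $\langle p(c)\rangle^2 = \langle p^2(c)\rangle$, one finds $A^2 = J$, so $A^2\subseteq J$; but $A\not\subseteq J$, since $p(c)\notin J$. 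Hence $J$ is not semiprime, and $J$ is proper because $J\cap E^0 = H\neq E^0$, contradicting (1). The symmetric argument for $(3)\Rightarrow(4)$ runs by contraposition from the failure of Condition (K): this yields a cycle $c$ without (K), and Theorem \ref{specificprime}(3) (with $H=E^0\setminus M(s(c))$ and $p$ irreducible) promotes it to a genuine non-graded \emph{prime} ideal $P=I(H,B_H)+\langle p(c)\rangle$. The same computation gives $P^2 = I(H,B_H)+\langle p^2(c)\rangle\subsetneq P = P\cap P$, so taking $P_1=P_2=P$ refutes (3).

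The main obstacle I anticipate is the bookkeeping that makes these two constructions legitimate, rather than the logical skeleton: verifying that $J$ (resp.\ $P^2$) is proper and that $p(c)$ genuinely lies outside it. Both reduce to understanding $\langle\{c^0\}\rangle$ through the isomorphism $\langle\{c^0\}\rangle\cong M_\Lambda(K[x,x^{-1}])$ of Lemma \ref{cycle-ideal-lemma}, under which, by Remark \ref{cycle-ideal-remark} and Proposition \ref{Morita equivalence}, the ideals $\langle p(c)\rangle$ and $\langle p^2(c)\rangle$ correspond to the principal ideals $(p)$ and $(p^2)$ of the principal ideal domain $K[x,x^{-1}]$; the strict inclusions $(p^2)\subsetneq(p)\subsetneq K[x,x^{-1}]$ (a degree count) then supply exactly the non-containment and properness needed, while the square-free criterion of Theorem \ref{Radical ideals} certifies non-semiprimeness. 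I would also check the routine point that $H=E^0\setminus M(s(c))$ is hereditary and saturated, so that Theorem \ref{specificprime}(3) indeed applies.
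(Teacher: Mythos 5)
Your proof is correct, but it decomposes the theorem differently from the paper, so it is worth comparing the two routes. The paper proves the cycle (2) $\Rightarrow$ (1) $\Rightarrow$ (3) $\Rightarrow$ (2) together with (2) $\Leftrightarrow$ (4): its implication (1) $\Rightarrow$ (3) is a purely ring-theoretic argument (a product of primes is semiprime, hence equals the intersection of the primes containing it, which a short lattice computation turns into $P_1\cap\cdots\cap P_n$), and all graph-theoretic work is concentrated in the single implication (3) $\Rightarrow$ (2), where a non-graded ideal yields a cycle $c$ without exits, then a prime $P=I(H',B_{H'})+\langle p(c)\rangle$ via Theorem \ref{specificprime}(3) with $H'=\{w : w\ngeq s(c)\}$, and finally $P^2\neq P\cap P$ via Lemma \ref{ProductNOTintersection}. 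You instead prove (1) $\Leftrightarrow$ (2), (2) $\Rightarrow$ (3), and (3) $\Rightarrow$ (4). This buys two genuine simplifications: your (2) $\Rightarrow$ (3) is soft (graded primes, Lemma \ref{Product=Intersection}(1), induction --- no semiprimeness needed), and your (1) $\Rightarrow$ (2) needs no primeness at all, since the pair $A=I(H,S)+\langle p(c)\rangle$ and $J=A^2=I(H,S)+\langle p^2(c)\rangle$, built from the non-graded ideal's own admissible pair, witnesses non-semiprimeness directly, bypassing Theorem \ref{specificprime}, the set $H'$, and Lemma \ref{ProductNOTintersection} entirely. The cost is that you run the cycle-plus-matrix-ring construction twice, and your (3) $\Rightarrow$ (4) still requires essentially everything the paper's (3) $\Rightarrow$ (2) does. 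One point you should make explicit there: to invoke Lemma \ref{cycle-ideal-lemma}, Lemma \ref{Product}(1), and the correspondence $\langle p^k(c)\rangle \leftrightarrow M_\Lambda((p^k))$ inside $L/I(H,B_H)$, you need the cycle $c$ without (K) to have no exits in $E\setminus(H,B_H)$, i.e.\ no exit $e$ with $r(e)\geq s(c)$; this is true (an exit $e$ with $r(e)\geq s(c)$, followed by a minimal-length path from $r(e)$ back to $s(e)$, would produce a second cycle based at a vertex of $c$), but it is not automatic from the definition of ``without (K)'' and deserves a sentence --- alternatively, follow your stated fallback and extract $c$ from a non-graded ideal via (2) and Theorem \ref{arbitrary ideals}(1), where it comes without exits for free, exactly as the paper arranges.
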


\begin{proof}
(2) $\Rightarrow$ (1) This follows from the fact that, by Theorem \ref{Radical ideals}, every proper graded ideal in $L$ is semiprime.

(1) $\Rightarrow$ (3) Suppose that every proper ideal of $L$ is semiprime, and let $P_{1}, \dots, P_{n}$ be prime ideals. As mentioned before, it is a standard fact that every semiprime ideal is the intersection of the prime ideals that contain it, and hence $P_{1} \cdots P_{n} = \bigcap_{i\in Y}Q_{i}$ for some prime ideals $Q_{i}$ of $L$. Now for any $i \in Y$, the ideal $Q_{i}$ being prime implies that $P_{j_i} \subseteq Q_{i}$ for some $j_i \in \{1, \dots, n\}$, and moreover,  $P_{1} \cdots P_{n} \subseteq P_{i}$ for each $i \in \{1, \dots, n\}$. It follows that 
\[
P_{1} \cdots P_{n} = \bigcap_{i\in Y}Q_{i} \cap \bigcap_{i \in Y} P_{j_i} = \bigcap_{i\in Y}(Q_{i} \cap P_{j_i}) = \bigcap_{i\in Y} P_{j_i} \supseteq P_{1}\cap\cdots \cap P_{n} \supseteq P_{1} \cdots P_{n},
\]
and so $P_{1} \cdots P_{n} = P_{1}\cap\cdots \cap P_{n}$.

(3) $\Rightarrow$ (2) Suppose, seeking a contradiction, that (3) holds, and there is a non-graded ideal $I$ in $L$. By Theorem \ref{arbitrary ideals}(1), $I=I(H,S)+ \sum_{i\in Y} \langle f_{i}(c_{i})\rangle$, where $Y$ is nonempty, each $c_{i}$ is a cycle without exits in $E\backslash(H,S)$, and each $f_{i}(x)\in K[x]$ has a nonzero constant term. Now fix an $i\in Y$. Since $c_{i}$ has no exits in $E\backslash(H,S)$, it cannot be the case that a vertex on $c_{i}$ is the base of another cycle in $E$; that is, $c_{i}$ is a cycle without (K). Moreover, letting $H'=\{w\in E^{0} : w\ngeq s(c_{i})\}$, it is easy to see that $H'$ is hereditary and saturated, and that $H \subseteq H'$. It follows that $c_{i}$ has no exits in $E \backslash (H',B_{H'})$, and therefore $P=I(H^{\prime},B_{H'})+\langle p(c_{i})\rangle$ is a prime ideal, for any irreducible polynomial $p(x)\in K[x,x^{-1}]$, by Theorem \ref{specificprime}. Then, by Theorem \ref{arbitrary ideals}(2), in $L/I(H^{\prime}, B_{H^{\prime}}) \cong L_{K} (E\backslash H^{\prime},B_{H^{\prime}})$, we have
\[
\bar{P}=P/I(H^{\prime},B_{H^{\prime}})=\langle p(c_{i})\rangle\subseteq \langle \{c_{i}^{0}\}\rangle.
\] 
Moreover, $\langle \{c_{i}^{0}\}\rangle \cong M_{\Lambda}(K[x,x^{-1}])$ for some index set $\Lambda$, by Lemma \ref{cycle-ideal-lemma}. Then Proposition \ref{Morita equivalence} and Lemma \ref{ProductNOTintersection} imply that $\bar{P}^{2}$ is not an intersection of prime ideals of $\langle \{c_{i}^{0}\}\rangle$. Since $M_{\Lambda}(K[x,x^{-1}])$ is a ring with local units, every ideal (respectively, prime ideal) of $\langle \{c_{i}^{0}\}\rangle$ is an ideal (respectively, prime ideal) of $L/I(H^{\prime},B_{H^{\prime}})$, and $\langle \{c_{i}^{0}\}\rangle\cap Q$ is a prime ideal of $\langle \{c_{i}^{0}\}\rangle$ for any prime ideal $Q$ of $L/I(H^{\prime},B_{H^{\prime}})$. Consequently, $\bar{P}^2$ cannot be an intersection of prime ideals of $L/I(H^{\prime},B_{H^{\prime}})$, and hence $P^{2}$ cannot be an intersection of prime ideals of  $L$. In particular $P^{2} \neq P \cap P$, contradicting (3). Hence every ideal of $L$ must be graded.

(2) $\Leftrightarrow$ (4) See Theorem \ref{LPAbasics}(1).
\end{proof}

Easy ring-theoretic considerations yield numerous examples of Leavitt path algebras which satisfy the conditions of Theorem \ref{Every I radical} but not those of Theorem \ref{Everyone prime}; for instance, $K\oplus K \cong L_K(\bullet \ \bullet)$ is one such.   Here is a more interesting example, one which relies on the graph-theoretic structure of $E$.  

\begin{example}
Let $E$ be the following graph.

\smallskip
\[ 
\xymatrix{ \bullet^{u} \ar@(ul,ur) \ar@(dr,dl) \ar[r] & \bullet^{v} &  \bullet^{w} \ar[l] \ar@(ul,ur) \ar@(dr,dl)}
\]
\smallskip

\noindent
Then $E$ clearly satisfies Condition (K), and hence every proper ideal of $L_K(E)$ is semiprime, by Theorem \ref{Every I radical}. However, not every proper ideal in this ring is prime, by Theorem \ref{Everyone prime}, since the admissible pairs for $E$ do not form a chain. More specifically, the proper hereditary saturated subsets of $E^{0}$ are $\{v\}$, $\{u,v\}$, and $\{v,w\}$, and these clearly do not form a chain under set inclusion. Since $E$ is row-finite, and $B_H = \emptyset$ for every hereditary saturated $H \subseteq E^{0}$, it follows that the admissible pairs $(H,S)$ do not form a chain in the relevant partial order either. \hfill $\Box$
\end{example}

We are now ready for the key result of the article, a description of the ideals in a Leavitt path algebra which can be written as products of semiprime ideals.  

\begin{theorem} \label{Product of semiprimes} 
Let $L=L_K(E)$ be a Leavitt path algebra, and let $I$ be a proper ideal of $L$. Then the following are equivalent.
\begin{enumerate}
\item[$(1)$] $I$ is a product of semiprime ideals.

\item[$(2)$] $I=J_{1}\cdots J_{n}$  for some semiprime ideals $J_{1}, \dots, J_{n}$ of $L$, where $\mathrm{gr}(J_{j})=\mathrm{gr}(I)$ and $\mathrm{Cyc}(J_{j})=\mathrm{Cyc}(I)$ for all $j \in \{1,\dots, n\}$.

\item[$(3)$] $I=I(H,S)+\sum_{i\in Y} \langle f_{i}(c_{i})\rangle$, where $Y$ is a possibly empty index set, each $c_{i}$ is a cycle without exits in $E\backslash(H,S)$, each $f_{i}(x)\in K[x]$ has a nonzero constant term; and there is a positive integer $n$ such that, for each $i\in Y$, there exist pairwise non-conjugate irreducible polynomials $p_{1}(x),\dots, p_{k}(x) \in K[x]$ and integers $1 \leq m_{1},\dots, m_{k} \leq n$ satisfying $f_{i}(x)=p_{1}^{m_{1}}(x)\cdots p_{k}^{m_{k}}(x)$.
\end{enumerate}
\end{theorem}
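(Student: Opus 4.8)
The plan is to prove the cycle of implications $(1) \Rightarrow (2) \Rightarrow (3) \Rightarrow (1)$, disposing of the graded case at the outset: if $I = I(H,S)$ is graded then it is semiprime by Theorem \ref{Radical ideals} (the case $Y = \emptyset$), so all three statements hold trivially with $n = 1$, $J_1 = I$, and empty $Y$. Thus I assume throughout that $I$ is not graded and write $I = I(H,S) + \sum_{i \in Y}\langle f_i(c_i)\rangle$ as in Theorem \ref{arbitrary ideals}(1), with $Y \neq \emptyset$.

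For $(1) \Rightarrow (2)$, suppose $I = J_{1}\cdots J_{n}$ with each $J_{j}$ semiprime. The obstacle is that Proposition \ref{EqualGradedPart} requires $I \not\subseteq \mathrm{gr}(J_k)$ for every factor, which need not hold a priori. I would first eliminate the ``bad'' factors, those with $I \subseteq \mathrm{gr}(J_k)$: since $\mathrm{gr}(J_k)$ is graded, Lemma \ref{Product=Intersection}(1) lets me replace each such $J_k$ by $\mathrm{gr}(J_k)$ in the product without altering it (using $\mathrm{gr}(J_k)\cdot X = \mathrm{gr}(J_k)\cap X$ together with $I \subseteq \mathrm{gr}(J_k) \subseteq J_k$). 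Because $I$ is not graded, the factors cannot all be bad, so at least one ``good'' factor survives; collecting the (now graded) bad factors into a single graded ideal $G = \bigcap \mathrm{gr}(J_k)$ and absorbing it into one good factor $J_{k_0}$ via $J_{k_0} \mapsto G \cap J_{k_0}$ (still semiprime, being an intersection of semiprime ideals, and still satisfying $I \not\subseteq \mathrm{gr}(G \cap J_{k_0})$ since $\mathrm{gr}(G \cap J_{k_0}) \subseteq \mathrm{gr}(J_{k_0})$) produces a semiprime factorization of $I$ to which Proposition \ref{EqualGradedPart} applies. That proposition then returns semiprime ideals $B_j$ with $I = B_1 \cdots B_m$, $\mathrm{gr}(B_j) = \mathrm{gr}(I)$, and $\mathrm{Cyc}(B_j) = \mathrm{Cyc}(I)$, which is exactly $(2)$.

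For $(2) \Rightarrow (3)$, I write each semiprime factor via Theorem \ref{Radical ideals} as $J_j = I(H,S) + \sum_{i\in Y}\langle q_{ij}(c_i)\rangle$ with each $q_{ij}$ square-free, using $\mathrm{gr}(J_j) = I(H,S)$ and $\mathrm{Cyc}(J_j) = \{c_i : i \in Y\}$ to align all factors over the same graded part and the same cycle set. Passing to $L/I(H,S) \cong L_K(E\setminus(H,S))$ and multiplying out, Lemma \ref{Product} annihilates all cross-terms between distinct cycles and multiplies same-cycle polynomials, giving $\bar{I} = \bigoplus_{i\in Y}\langle \prod_{j=1}^n q_{ij}(c_i)\rangle$. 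Comparing this with $\bar I = \bigoplus_i \langle f_i(c_i)\rangle$ inside $\langle\{c_i^0\}\rangle \cong M_{\Lambda_i}(K[x,x^{-1}])$ (Lemma \ref{cycle-ideal-lemma}, Remark \ref{cycle-ideal-remark}, Proposition \ref{Morita equivalence}) shows $f_i$ and $\prod_j q_{ij}$ generate the same ideal of $K[x,x^{-1}]$, hence are $K$-scalar multiples in $K[x]$ (both having nonzero constant term, which forces the unit to be a constant). Since a product of $n$ square-free polynomials has every irreducible factor occurring with multiplicity at most $n$, each $f_i$ factors as $p_1^{m_1}\cdots p_k^{m_k}$ with all $m_t \leq n$, which is $(3)$.

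For $(3) \Rightarrow (1)$, I run this computation in reverse. The key combinatorial fact is that a polynomial whose irreducible factors all occur with multiplicity at most $n$ is a product of $n$ square-free polynomials: explicitly, setting $q_{ij} = \prod_{t : m_{t,i} \geq j} p_{t,i}$ for $j = 1, \ldots, n$ makes each $q_{ij}$ square-free with $\prod_{j=1}^n q_{ij} = f_i$. Defining $J_j = I(H,S) + \sum_{i\in Y}\langle q_{ij}(c_i)\rangle$ yields semiprime ideals by Theorem \ref{Radical ideals} (with the understanding that when some $q_{ij}$ is a unit, so that $\langle q_{ij}(c_i)\rangle = \langle\{c_i^0\}\rangle$, the corresponding cycle is simply absorbed into the graded part, leaving $J_j$ semiprime), and the same cross-term computation via Lemma \ref{Product}, together with $I(H,S) = I(H,S)^n \subseteq J_1 \cdots J_n$ and Theorem \ref{arbitrary ideals}(2), gives $J_1 \cdots J_n = I$. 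I expect the reduction step in $(1)\Rightarrow(2)$ — massaging an arbitrary semiprime factorization into one to which Proposition \ref{EqualGradedPart} applies — to be the main obstacle, the multiplicity-versus-square-free-count bookkeeping in the other two implications being routine once the cross-term vanishing of Lemma \ref{Product} is in hand.
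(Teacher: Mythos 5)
Your proposal is correct and takes essentially the same approach as the paper: the same cycle of implications, with the pre-processing of factors satisfying $I \subseteq \mathrm{gr}(J_k)$ followed by Proposition \ref{EqualGradedPart} for $(1) \Rightarrow (2)$, the same passage to $M_{\Lambda_i}(K[x,x^{-1}])$ via Lemma \ref{cycle-ideal-lemma}, Remark \ref{cycle-ideal-remark}, and Proposition \ref{Morita equivalence} for $(2) \Rightarrow (3)$, and the same square-free layering for $(3) \Rightarrow (1)$. The only differences are cosmetic: the paper intersects the collected graded ideal into every surviving factor rather than absorbing it into just one, and it packages your explicit decomposition $q_{ij} = \prod_{t \,:\, m_{t,i} \geq j} p_{t,i}$ as an induction on $n$ that peels off one square-free layer (the multiplicity-$n$ irreducibles) at a time.
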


\begin{proof}
(1) $\Rightarrow$ (2) If $I$ is graded, then, by Theorem \ref{Radical ideals}, $I$ is semiprime, and so (2) holds vacuously. Thus, we may assume that $I$ is not graded, and that $I = A_{1} \cdots A_{n}$ for some semiprime ideals $A_{1}, \dots, A_{n}$. 

Next, suppose that $I \subseteq \mathrm{gr}(A_{j})$ for all $j \in \{1, \dots, n\}$, and let $B = \bigcap_{j=1}^n \mathrm{gr}(A_{j})$. Since $I \subseteq B$, and $B$ is graded, by Lemma \ref{Product=Intersection}(1), we have 
\[
I = B^{n}I = BA_{1} \cdots BA_{n} = (B\cap A_{1}) \cdots (B \cap A_{n}) = B^{n} = B,
\]
which contradicts $I$ being non-graded. Thus, $I \not\subseteq \mathrm{gr}(A_{j})$ for at least one $j$. Upon reindexing, we may assume that $I \not\subseteq \mathrm{gr}(A_{j})$ for $j \in \{1, \dots, k\}$, and $I \subseteq \mathrm{gr}(A_{j})$ for $j \in \{k+1, \dots, n\}$. Also, let $B = \bigcap_{j=k+1}^n \mathrm{gr}(A_{j})$ (with $B$ taken to be $L$, if $k=n$), and let $A'_{j} = B \cap A_{j}$ for each $j \in \{1, \dots, k\}$. Then $I \not\subseteq \mathrm{gr}(A'_{j})$, and, as an intersection of semiprime ideals, $A'_{j}$ is semiprime, for each $j$. Moreover, as before,
\[
I = BA_{1} \cdots BA_{n} =  (B\cap A_{1}) \cdots (B \cap A_{n}) = A'_{1} \cdots A'_{k} \cdot B^{n-k} = A'_{1} \cdots A'_{k}.
\]
The desired conclusion now follows from Proposition \ref{EqualGradedPart}.

(2) $\Rightarrow$ (3) Let $J_{1}, \dots, J_{n}$ be as in (2). By Theorem \ref{arbitrary ideals}(1) and Theorem \ref{Radical ideals}, we can then write $I=I(H,S)+\sum_{i\in Y} \langle f_{i}(c_{i})\rangle$, and for each $k \in \{1,\dots,n\}$, $J_{k}=I(H,S)+\sum_{i\in Y}
\langle g_{ik}(c_{i})\rangle$, where each $c_{i}$ is a cycle without exits in $E \backslash (H,S)$, each $f_{i}(x), g_{ik}(x) \in K[x]$ has a nonzero constant term, and each $g_{ik}(x)$ is square-free. In $L/I(H,S) \cong L_{K}(E\backslash(H,S))$, we have $\bar{I}=\bar{J}_{1}\cdots \bar{J}_{n}$, where $\bar{I}=I/I(H,S)$ and $\bar{J}_{k}=J_{k}/I(H,S)$ for each $k \in \{1,\dots,n\}$. So, by Theorem \ref{arbitrary ideals}(2),
\[
\bar{I}= \bigoplus_{i\in Y} \langle f_{i}(c_{i})\rangle=\Big[\bigoplus_{i\in Y} \langle g_{i1}(c_{i})\rangle\Big]\cdots \Big[\bigoplus_{i\in Y} \langle g_{in}(c_{i})\rangle\Big].
\]
Equating the terms corresponding to each $c_{i}$ on the two sides of the above equation, by Lemmas \ref{Product} and \ref{cycle-ideal-lemma}, we conclude that for each $i\in Y$, 
\[
\langle f_{i}(c_{i})\rangle=\langle g_{i1}(c_{i})\cdots g_{in}(c_{i})\rangle\subseteq \langle \{c_{i}^{0}\}\rangle\cong M_{\Lambda_{i}}(K[x,x^{-1}]).
\]
By Proposition \ref{Morita equivalence} and Remark \ref{cycle-ideal-remark}, for each $i\in Y$ we then have $\langle f_{i}(x)\rangle=\langle g_{i1}(x) \cdots g_{in}(x)\rangle$ in $K[x,x^{-1}]$. Since each of the polynomials involved has a nonzero constant term, it follows that $f_{i}(x)h(x) = g_{i1}(x) \cdots g_{in}(x)$ for some $h(x) \in K[x]$. Since each $g_{ik}(x)$ is square-free in $K[x]$, we conclude that if $f_{i}(x)$ is divisible by $p^{m}(x)$, for some irreducible $p(x) \in K[x]$ and positive integer $m$, then $m\leq n$, proving (3).

(3) $\Rightarrow$ (1) Suppose that the ideal $I = I(H,S)+\sum_{i\in Y} \langle f_{i}(c_{i})\rangle$ satisfies the conditions in (3). If $Y = \emptyset$, then $I$ is graded, and is hence itself a semiprime ideal, by Theorem \ref{Radical ideals}. So we may assume that $Y \neq \emptyset$, and proceed by induction on $n$. 

Suppose that $n=1$. Then, for each $i\in Y$, $f_{i}(x)=p_{1}(x)\cdots p_{k}(x)$ for some pairwise non-conjugate irreducible polynomials $p_{1}(x),\dots, p_{k}(x) \in K[x]$, and hence, once again, $I$ itself is a semiprime ideal, by Theorem \ref{Radical ideals}. 

Now suppose that $n>1$ and that (1) holds for all $I$ satisfying (3), with the bound being
$n-1$. Let us write $Y = Y_{1} \cup Y_{2}$, such that the following conditions are satisfied.
\begin{enumerate}
\item[(i)] For each $i\in Y_{1}$, $f_{i}(x)=p_{i1}^{n}(x)\cdots p_{ir_{i}}^{n}(x)q_{i1}^{k_{i1}}(x)\cdots q_{is_{i}}^{k_{is_{i}}} (x),$ where $1\leq k_{ij}\leq n-1$ for each $j$, and the $p_{ij}(x), q_{ij}(x) \in K[x]$ are pairwise non-conjugate and irreducible.

\item[(ii)] For each $i\in Y_{2}$, $f_{i}(x)=q_{i1}^{k_{i1}}(x)\cdots q_{is_{i}}^{k_{is_{i}}}(x)$, where $1\leq k_{ij}\leq n-1$ for each $j$, and the $q_{ij}(x) \in K[x]$ are pairwise non-conjugate and irreducible.
\end{enumerate}
For each $i\in Y_{1}$, write $f_{i}(x)=f_{i(1)}(x)f_{i(2)}(x)$, where $f_{i(1)}(x)=p_{i1}(x)\cdots p_{ir_{i}}(x)$ and 
\[
f_{i(2)}(x)=p_{i1}^{n-1}(x)\cdots p_{ir_{i}}^{n-1}(x)q_{i1}^{k_{i1}}(x)\cdots q_{is_{i}}^{k_{is_{i}}}(x).
\]
Finally, by Theorem \ref{arbitrary ideals}(2), we can find ideals $J_{1}$ and $J_{2}$ of $L$ containing $I(H,S)$, such that in $L/I(H,S)\cong L_{K}(E\backslash(H,S))$,
\[
\bar{J}_{1}=J_{1}/I(H,S)= \bigoplus_{i\in Y_{1}} \langle f_{i(1)}(c_{i})\rangle\oplus
\bigoplus_{i\in Y_{2}} \langle\{c^{0}_{i}\}\rangle
\]
and
\[
\bar{J}_{2}=J_{2}/I(H,S)= \bigoplus_{i\in Y_{1}} \langle f_{i(2)}(c_{i})\rangle\oplus \bigoplus_{i\in Y_{2}} \langle f_{i}(c_{i})\rangle.
\]
Then, by the inductive hypothesis,
\[
J_{2}=I(H,S)+ \sum_{i\in Y_{1}} \langle f_{i(2)}(c_{i})\rangle+ \sum_{i\in Y_{2}} \langle f_{i}(c_{i})\rangle
\]
is a product of semiprime ideals. We shall complete the proof by showing that $I = J_{1}J_{2}$ and that $J_{1}$ is semiprime.

Noting that $\langle\{c^{0}_{i}\}\rangle$ is a graded ideal for each $i \in Y_{2}$, by Lemma \ref{Product} and Lemma \ref{Product=Intersection}(1), in $L/I(H,S) \cong L_{K}(E\backslash(H,S))$ we then have
\begin{align*}
\bar{J}_{1}\bar{J}_{2}  &  = \Big[\bigoplus_{i\in Y_{1}}\langle f_{i(1)}(c_{i})\rangle\oplus
\bigoplus_{i\in Y_{2}} \langle\{c^{0}_{i}\}\rangle \Big]\Big[\bigoplus_{i\in Y_{1}}\langle f_{i(2)}(c_{i})\rangle \oplus \bigoplus_{i\in Y_{2}}\langle f_{i}(c_{i})\rangle\Big]\\
&  = \bigoplus_{i\in Y_{1}} (\langle f_{i(1)}(c_{i})\rangle\langle f_{i(2)}(c_{i})\rangle) \oplus \bigoplus_{i\in Y_{2}} \langle\{c^{0}_{i}\}\rangle \langle f_{i}(c_{i})\rangle\\
&  = \bigoplus_{i\in Y_{1}} \langle f_{i}(c_{i})\rangle \oplus
\bigoplus_{i\in Y_{2}} \langle f_{i}(c_{i})\rangle = \bigoplus_{i\in Y} \langle f_{i}(c_{i})\rangle.
\end{align*}
Consequently 
\[
J_{1}J_{2}=I(H,S)+ \sum_{i\in Y} \langle f_{i}(c_{i})\rangle=I,
\] 
and so it remains to show that $J_{1}$ is semiprime.

Since the preimage of $\bigoplus_{i\in Y_{2}} \langle\{c^{0}_{i}\}\rangle \subseteq L/I(H,S)$ in $L$ under the natural projection is a graded ideal, there is some admissible pair $(H_{1},S_{1})$ such that $J_{1}= I(H_{1},S_{1})+ \sum_{i\in Y_{1}} \langle f_{i(1)}(c_{i})\rangle.$ By the correspondence between admissible pairs and graded ideals of $L$, we have $(H,S) \leq (H_{1},S_{1})$. Thus, for each $i \in  Y_{1}$, since $c_{i}$ is a cycle without exits in $E\backslash(H,S)$, it must also be without exits in $E\backslash(H_{1},S_{1})$. Since each $f_{i(1)}(x)$ is square-free, we conclude, by Theorem \ref{Radical ideals}, that $J_{1}$ is a semiprime ideal, as desired.
\end{proof}

With Theorem \ref{Product of semiprimes} in hand, for appropriate contrast we provide an example of an ideal in a Leavitt path algebra which cannot be written as a product of semiprime ideals.   

\begin{example}\label{notproductofsemiprimes}
Let $E$ be the following row-finite graph.

\smallskip
\[ 
\xymatrix{   \bullet^{v_1} \ar[r] & \bullet^{v_2} \ar[r] & \bullet^{v_3} \ar[r] & \cdots \\
  \bullet^{w_{1}} \ar[u] \ar@(dr,dl)^{c_1}  &  \bullet^{w_{2}} \ar[u]  \ar@(dr,dl)^{c_2}  &  \bullet^{w_{3}}  \ar[u] \ar@(dr,dl)^{c_3}  &  }
\]
\smallskip

\noindent
The set $H = \{v_i : i\in \mathbb{Z}^+\}$ is a hereditary saturated subset of $E^0$, and each $c_i$ is  a cycle without exits in $E^0 \setminus H$.     
 
For each $i\in \mathbb{Z}^+$ consider the polynomial $f_i(x) = (1 +  x)^i$ in $K[x]$.  We now form the ideal 
\[
I = I(H) + \sum_{i \in \mathbb{Z}^+} \langle f_i(c_i) \rangle = \langle \{ v_i : i\in \mathbb{Z}^+  \} \rangle + \sum_{i \in \mathbb{Z}^+} \langle  (w_i + c_i)^i \rangle
\] 
of $L_K(E)$. Then for each $i\in \mathbb{Z}^+$ the only monic irreducible factor of $f_i(x)$ is $p(x) = 1+x$. So there does not exist a positive integer $n$ for which each $f_i(x)$ is the product of $\leq n$ copies of $p(x)$.  Thus $I$ is not a product of semiprime ideals in $L_K(E)$. \hfill $\Box$ 
\end{example}  

Theorem \ref{Product of semiprimes} has the following interesting consequence.

\begin{corollary} \label{RadFactorization copy(1)} 
Let $L$ be a Leavitt path algebra. If $I$ is a proper ideal of $L$ such that $I/\mathrm{gr}(I)$ is finitely generated, then $I$ is a product of semiprime ideals of $L$.

In particular, if $L$ is two-sided Noetherian, then every proper ideal of $L$ is a product of semiprime ideals.
\end{corollary}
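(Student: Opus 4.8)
The plan is to derive the corollary directly from the generator-level characterization of products of semiprime ideals, namely the equivalence of (1) and (3) in Theorem \ref{Product of semiprimes}. The entire content of the argument is to convert the finite-generation hypothesis on $I/\mathrm{gr}(I)$ into the existence of a \emph{uniform} bound $n$ on the multiplicities of the irreducible factors of the polynomials $f_i$, which is exactly what condition (3) demands.

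First I would put $I$ into standard form via Theorem \ref{arbitrary ideals}(1), writing $I=I(H,S)+\sum_{i\in Y}\langle f_i(c_i)\rangle$ with $\mathrm{gr}(I)=I(H,S)$, each $c_i$ a cycle without exits in $E\setminus(H,S)$, and each $f_i(x)\in K[x]$ of nonzero constant term. By Theorem \ref{arbitrary ideals}(2), inside $L/\mathrm{gr}(I)\cong L_K(E\setminus(H,S))$ we have
\[
I/\mathrm{gr}(I)=\bigoplus_{i\in Y}\langle f_i(c_i)\rangle .
\]
If $Y=\emptyset$ then $I$ is graded, hence semiprime by Theorem \ref{Radical ideals}, and there is nothing to prove; so I may assume $Y\neq\emptyset$.

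The crucial step, and the one I expect to be the main obstacle, is to show that finite generation of $I/\mathrm{gr}(I)$ forces $Y$ to be finite. Here I would use that the summands $\langle f_i(c_i)\rangle$ are two-sided ideals of $L/\mathrm{gr}(I)$, each nonzero (since $f_i\neq 0$ and $c_i$ generates a corner behaving like $K[x,x^{-1}]$, so $f_i(c_i)\neq 0$). A finite generating set of $I/\mathrm{gr}(I)$ lies in a finite subsum $\bigoplus_{i\in F}\langle f_i(c_i)\rangle$ with $F\subseteq Y$ finite; and because each $\langle f_i(c_i)\rangle$ is itself two-sided, flanking any generator by arbitrary ring elements keeps each component within its own summand, so the ideal generated by these finitely many elements is contained in $\bigoplus_{i\in F}\langle f_i(c_i)\rangle$. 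Since that ideal equals all of $\bigoplus_{i\in Y}\langle f_i(c_i)\rangle$ and the nonzero summands are independent, we conclude $Y=F$, so $Y$ is finite.

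Once $Y$ is finite the conclusion is immediate. Since $K[x]$ is a UFD and each $f_i$ has nonzero constant term, I would factor $f_i(x)=p_{i1}^{m_{i1}}(x)\cdots p_{ik_i}^{m_{ik_i}}(x)$ into powers of pairwise non-conjugate irreducibles (distinct monic irreducibles with nonzero constant term being automatically non-conjugate). Setting $n=\max\{m_{ij}:i\in Y,\ 1\le j\le k_i\}$, a finite maximum because $Y$ is finite, verifies condition (3) of Theorem \ref{Product of semiprimes} with this $n$, so $I$ is a product of semiprime ideals. Finally, if $L$ is two-sided Noetherian, then every proper ideal $I$ is finitely generated, whence its homomorphic image $I/\mathrm{gr}(I)$ is finitely generated as well, and the first part applies to every such $I$.
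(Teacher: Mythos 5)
Your proposal is correct and follows essentially the same route as the paper: standard form via Theorem \ref{arbitrary ideals}(1)--(2), deducing that $Y$ is finite from finite generation of $I/\mathrm{gr}(I)$, and then invoking condition (3) of Theorem \ref{Product of semiprimes}. The only difference is that you spell out two steps the paper leaves implicit --- the direct-sum argument showing a finite generating set forces $Y$ to be finite, and the choice of the uniform bound $n$ as the maximum multiplicity over the finitely many $f_i$ --- both of which are exactly the intended justifications.
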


\begin{proof}
By Theorem \ref{arbitrary ideals}(1), we can write $I=I(H,S)+ \sum_{i\in Y} \langle f_{i}(c_{i})\rangle$, where each $c_{i}$ is a cycle without exits in $E\backslash(H,S)$, and each $f_{i}(x)\in K[x]$ has a nonzero constant term. Then $I/I(H,S) \cong  \bigoplus_{i\in Y}\langle f_{i}(c_{i}) \rangle$, by Theorem \ref{arbitrary ideals}(2). Our hypothesis then implies that $Y$ is finite. Hence $I$ satisfies condition (3) of Theorem \ref{Product of semiprimes}, and is therefore a product of semiprime ideals.

If $L$ is Noetherian and $I$ is a proper ideal, then necessarily $I$, and hence also $I/\mathrm{gr}(I)$, is finitely generated. Thus, in this situation $I$ is product of semiprime ideals.
\end{proof}

It is shown in \cite[Corollary 16]{C} that if $E$ is finite (i.e., $E^{0}$ and $E^{1}$ are both finite), then $L_{K}(E)$ is Noetherian. Hence Corollary \ref{RadFactorization copy(1)} implies that every proper ideal of $L_{K}(E)$ is a product of semiprime ideals whenever $E$ is finite.

Our next result is the expected one, where we answer the question: for which Leavitt path algebras is it the case that every proper ideal is a product of semiprime ideals?

\begin{theorem} \label{Every I is prod. semiprimes} 
The following are equivalent for any Leavitt path algebra $L=L_K(E)$.
\begin{enumerate}
\item[$(1)$] Every proper ideal of $L$ is a product of semiprime ideals.

\item[$(2)$] For every ideal $I$ of $L$, $I/\mathrm{gr}(I)$ is finitely generated.

\item[$(3)$] Given any hereditary saturated subset $H$ of $E^0$, there are only finitely many cycles $c$ in $E$ with the property that $\{c^{0}\} \cap H = \emptyset$ and $r(e) \in H$ for all exits $e$ of $c$.  
\end{enumerate}
\end{theorem}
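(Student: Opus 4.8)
The plan is to prove the cycle of implications $(2) \Rightarrow (1) \Rightarrow (3) \Rightarrow (2)$, leaning entirely on the structural results already established. The implication $(2) \Rightarrow (1)$ is immediate from Corollary \ref{RadFactorization copy(1)}: that corollary says that whenever $I/\mathrm{gr}(I)$ is finitely generated, $I$ is a product of semiprime ideals, so if $(2)$ holds for every ideal then in particular every proper ideal is such a product.

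The bridge between the algebraic statement $(2)$ and the graph-theoretic statement $(3)$ rests on a single observation, which I expect to be the main technical point. For a hereditary saturated set $H$ and any $S \subseteq B_H$, I claim that a cycle $c$ is without exits in the quotient graph $E \setminus (H,S)$ if and only if $\{c^0\} \cap H = \emptyset$ and $r(e) \in H$ for every exit $e$ of $c$ in $E$ — that is, precisely the cycles counted in $(3)$ — and that this description is independent of $S$. The verification is bookkeeping with the quotient-graph construction recalled in Section \ref{LPAidealSect}: a cycle cannot pass through a primed vertex $v'$, since these are sinks in $E\setminus(H,S)$, so $c$ survives in the quotient exactly when its vertices avoid $H$; and an exit edge $e$ of $c$ survives as an exit in $E\setminus(H,S)$ — either as $e$ itself when $r(e)\notin H$, or as a primed edge $e'$ when $r(e) \in B_H \setminus S$ — if and only if $r(e) \notin H$, using $B_H \cap H = \emptyset$. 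The delicate case, and the one I would treat carefully, is when a vertex of $c$ is itself a breaking vertex: one must check that no spurious primed exit is created, which holds because the only non-cycle edges out of such a vertex have range in $H$ and hence contribute no primed edge.

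Granting this observation, $(3) \Rightarrow (2)$ is short. Given any ideal $I$, set $H = I \cap E^0$ and write $I = I(H,S) + \sum_{i\in Y}\langle f_i(c_i)\rangle$ as in Theorem \ref{arbitrary ideals}(1). Each $c_i$ is a cycle without exits in $E\setminus(H,S)$, hence one of the finitely many cycles permitted by $(3)$; therefore $Y$ is finite, and $I/\mathrm{gr}(I) \cong \bigoplus_{i\in Y}\langle f_i(c_i)\rangle$ is finitely generated by Theorem \ref{arbitrary ideals}(2).

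Finally, for $(1) \Rightarrow (3)$ I would argue the contrapositive, mirroring the construction in Example \ref{notproductofsemiprimes}. If $(3)$ fails, fix a hereditary saturated $H$ (necessarily $H \neq E^0$) admitting infinitely many cycles of the stated type, and choose a countable family $c_1, c_2, \dots$ of distinct such cycles; by the observation above, with $S = B_H$ (so that $B_H \setminus S = \emptyset$ and no primed edges arise), each $c_j$ is without exits in $E\setminus(H,B_H)$. Taking the irreducible polynomial $p(x) = 1+x \in K[x]$, Theorem \ref{arbitrary ideals}(1) yields a proper ideal $I = I(H,B_H) + \sum_{j\geq 1}\langle p^j(c_j)\rangle$ with $I \cap E^0 = H$ and $\mathrm{gr}(I) = I(H,B_H)$. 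Since the multiplicity of $p$ in $f_j = p^j$ grows without bound, no single integer $n$ can bound these multiplicities, so condition $(3)$ of Theorem \ref{Product of semiprimes} fails for $I$; hence $I$ is not a product of semiprime ideals and $(1)$ fails. This closes the cycle and proves the theorem.
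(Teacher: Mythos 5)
Your overall architecture is essentially the paper's: the same three ingredients (Corollary \ref{RadFactorization copy(1)} for the easy implication, Theorem \ref{arbitrary ideals} to pass between ideals and cycle data, and a counterexample ideal of the form $I(H,B_H)+\sum_j\langle p^j(c_j)\rangle$ with $p(x)=1+x$ and unbounded multiplicities, ruled out by Theorem \ref{Product of semiprimes}), arranged in the cycle $(2)\Rightarrow(1)\Rightarrow(3)\Rightarrow(2)$ instead of the paper's $(1)\Rightarrow(2)\Rightarrow(3)\Rightarrow(1)$ — your $(1)\Rightarrow(3)$ simply fuses the two counterexample constructions that the paper distributes over its two contrapositives. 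However, the claim you isolate as the ``main technical point'' is false as stated. You assert that for \emph{every} $S\subseteq B_H$, a cycle $c$ is without exits in $E\setminus(H,S)$ if and only if $\{c^0\}\cap H=\emptyset$ and every exit of $c$ in $E$ has range in $H$. The ``if'' direction fails whenever a vertex of $c$ lies in $B_H\setminus S$, and your resolution of the ``delicate case'' looks at the wrong edges: primed edges $e'$ of $E\setminus(H,S)$ arise from edges $e$ with $r(e)\in B_H\setminus S$, i.e.\ from edges pointing \emph{into} a breaking vertex, not from edges it emits. If $v\in\{c^0\}\cap(B_H\setminus S)$ and $e$ is the edge of $c$ with $r(e)=v$, then $e'$ exists in the quotient graph, has $s(e')=s(e)\in\{c^0\}$ and $r(e')=v'$, and is not an edge of $c$; hence it is an exit of $c$ in $E\setminus(H,S)$. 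Concretely, let $E$ have vertices $v,w_1,w_2,\dots$, a loop $c$ at $v$, and edges $f_n\colon v\to w_n$ for all $n\geq 1$; then $H=\{w_n : n\geq 1\}$ is hereditary and saturated, $v\in B_H$, and $c$ satisfies your two conditions, yet in $E\setminus(H,\emptyset)$ the primed edge $c'$ is an exit of $c$. The correct equivalence needs the extra clause $\{c^0\}\cap(B_H\setminus S)=\emptyset$, so the description is \emph{not} independent of $S$.

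Fortunately the error does not reach the two places you invoke the claim, so the proof is repairable rather than broken. In $(3)\Rightarrow(2)$ you only use the direction ``without exits in $E\setminus(H,S)$ implies the conditions of $(3)$,'' which holds for every $S$: any exit of $c$ in $E$ with range outside $H$ survives as an exit in the quotient, and cycles of the quotient graph involve no primed vertices, so they are cycles of $E$ avoiding $H$. In $(1)\Rightarrow(3)$ you specialize to $S=B_H$, where $B_H\setminus S=\emptyset$, no primed edges exist, and the ``if'' direction does hold — exactly the specialization the paper itself uses when it says each $c_i$ is without exits in $E\setminus(H,B_H)$. You should therefore restate the bridge lemma either with the extra clause or as the asymmetric pair of statements you actually need; as written, the lemma underpinning your proof is false and its justification misidentifies where primed exits come from.
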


\begin{proof}
(1) $\Rightarrow$ (2) We shall prove the contrapositive. Suppose that there is an ideal $I$ in $L$ for which $I/\mathrm{gr}(I)$ is not finitely generated. Then, by Theorem \ref{arbitrary ideals}(1), we can write $I=I(H,S)+ \sum_{i\in Y} \langle f_{i}(c_{i})\rangle$, where $Y$ is an infinite set, each $c_{i}$ is a cycle without exits in $E\backslash(H,S)$, and each $f_{i}(x)\in K[x]$. Let $N$ be a countably infinite subset of $Y$, which we identify with the set of positive integers. Then, by Theorem \ref{Product of semiprimes}, the ideal $A=I(H,S)+ \sum_{i\in N} \langle (s(c_{i}) + c_{i})^{i}\rangle$ is not a product of semiprime ideals.

(2) $\Rightarrow$ (3) As before, we shall prove the contrapositive. Suppose that there is a hereditary saturated subset $H$ of $E^0$, and there is an infinite set of cycles $\{c_{i} : i\in Y\}$ in $E$, with each $c_{i}$ having the property that $\{c_{i}^{0}\} \cap H = \emptyset$ and $r(c_{i}) \in H$ for all exits $e$ of $c_{i}$. Clearly each $c_{i}$ is a cycle without exits in $E \setminus (H, B_{H})$. Now, let $I = I(H,B_{H}) + \sum_{i\in Y} \langle s(c_{i}) + c_{i}\rangle$. Then $I/I(H,B_{H}) =  \bigoplus_{i\in Y}\langle s(c_{i}) + c_{i} \rangle$, by Theorem \ref{arbitrary ideals}(2), and hence $I/\mathrm{gr}(I) = I/I(H,B_{H})$ is not finitely generated.

(3) $\Rightarrow$ (1) Suppose that (3) holds, and let $I$ be a proper ideal of $L$. By Theorem \ref{arbitrary ideals}(1), we can write $I=I(H,S)+ \sum_{i\in Y} \langle f_{i}(c_{i})\rangle$, where each $c_{i}$ is a cycle without exits in $E \backslash (H,S)$, and each $f_{i}(x)\in K[x]$ has a nonzero constant term. Our hypothesis implies that there can be only finitely many cycles without exits in $E\backslash(H,S)$, and hence $Y$ must be finite. It follows that $I$ is a product of semiprime ideals, by Corollary \ref{RadFactorization copy(1)} (or Theorem \ref{Product of semiprimes}).
\end{proof}

Let $E$ be the one-vertex-one-loop graph, as in Example \ref{primenotprimaryremark}. Then using ring-theoretic considerations, it is easy to see that $K[x,x^{-1}] \cong L_K(E)$ satisfies the conditions of  Theorem \ref{Every I is prod. semiprimes} but not those of Theorem \ref{Every I radical}. Here is another example, one for which the justification is most natural from the graph-theoretic point of view.  


\begin{example}
Let $E$ be the following graph.

\smallskip
\[ 
\xymatrix{ \bullet \ar@(ul,dl) \ar[r] & \bullet \ar@(ur,dr)}
\]
\smallskip

\noindent
Since there are only finitely many cycles in $E$, this graph satisfies condition (3) in Theorem \ref{Every I is prod. semiprimes} trivially. Hence every proper ideal of $L_K(E)$ is a product of semiprime ideals. However, since $E$ certainly does not satisfy Condition (K), not every proper ideal of $L_K(E)$ is semiprime, by Theorem \ref{Every I radical}.  \hfill $\Box$
\end{example}

We conclude the article with two remarks.  First, while the uniqueness of irredundant prime factorization of ideals in Leavitt path algebras was established in \cite{EKR}, there is no analogous uniqueness result for semiprime factorization of ideals. An easy example suffices: in the graph $E$ consisting of four vertices $\{u,v,w,x\}$ and no edges, the distinct ideals $\langle u \rangle$, $\langle v \rangle$, $\langle w \rangle$, and $\langle x \rangle$ are each semiprime (as each is graded), but $\langle u \rangle \langle v \rangle = \langle w \rangle \langle x \rangle$ (as each product is the zero ideal). 

Second, we note some connecting implications among our main results. Of course every proper ideal being prime (Theorem \ref{Everyone prime}) trivially implies that every proper ideal is a product of primes (Theorem \ref{everyone-a-product-of-primes}), which then implies that every proper ideal is a product of semiprimes (Theorem \ref{Every I is prod. semiprimes}). As well, every proper ideal being prime (Theorem \ref{Everyone prime}) implies that every proper ideal is semiprime (Theorem \ref{Every I radical}), which then too (trivially) implies that every proper ideal is a product of semiprimes (Theorem \ref{Every I is prod. semiprimes}). However, the conditions given in Theorem \ref{everyone-a-product-of-primes} (every proper ideal is a product of primes) and Theorem \ref{Every I radical} (every proper ideal is semiprime) are independent. For example, if $E$ is the graph of Example \ref{primenotprimaryremark}, then $K[x,x^{-1}] \cong L_K(E)$
satisfies the conditions of  Theorem \ref{everyone-a-product-of-primes}, but not of Theorem \ref{Every I radical}. Conversely, if $E$ is the graph of Example \ref{exampleidealnotproductprimes}, then $L_K(E)$ satisfies the conditions of Theorem \ref{Every I radical}, but not of Theorem \ref{everyone-a-product-of-primes}.  

\subsection*{Acknowledgement}

We are grateful to the referee for a careful reading of the manuscript.

\vspace{.1in}

\noindent
Department of Mathematics, University of Colorado, Colorado Springs, CO, 80918, USA \newline
\noindent {\href{mailto:abrams@math.uccs.edu}{abrams@math.uccs.edu}},  {\href{mailto:zmesyan@uccs.edu}{zmesyan@uccs.edu}},  {\href{mailto:krangasw@uccs.edu}{krangasw@uccs.edu}}

\end{document}